\documentclass[11pt]{article}
\usepackage{amsmath, amssymb, amsfonts, amstext, amsthm, textcomp, enumerate}
\usepackage[mathscr]{euscript}
\usepackage{float}
\usepackage{booktabs}
\usepackage{mathtools}
\usepackage{graphicx}
\usepackage{caption}
\usepackage{epstopdf}
\usepackage{longtable}
\usepackage[utf8]{inputenc}
\usepackage{xcolor}
\usepackage{color}
\usepackage{hyperref}
\usepackage{graphicx}
\usepackage{dcolumn}% Align table columns on decimal point
\usepackage{bm}% bold math
\usepackage{epstopdf}
\usepackage[english]{babel}
\usepackage{subfigure}
\usepackage{xcolor}

\usepackage{ulem}
\newtheorem{thm}{Theorem}[section]
\newtheorem{lem}[thm]{Lemma}%[section]
\newtheorem{cor}[thm]{Corollary}%[section]
\newtheorem{pro}[thm]{Proposition}%[section]
\newtheorem{defn}[thm]{Definition}%[section]

\newtheorem{rem}[thm]{Remark}%[section]
\newcommand{\diag}{\operatorname{diag}}

\newcommand{\mnr}{\mathbf{M}_n\,(\mathbb{R})}

\newcommand{\IM}{\mbox{$\mathbf{IM}$}}
\newcommand{\IDM}{\mbox{$\mathbf{IDM}$}}
\newcommand{\SIDM}{\mbox{$\mathbf{SIDM}$}}

\newfont{\bb}{msbm10}

\newtheorem{theoremA}{Theorem}
\newtheorem{theoremB}{Theorem}

\begin{document}
\allowdisplaybreaks
	\title{Linear Preservers of Infinitely Divisible Matrices}
    \author{Shaun Fallat\thanks{Department of Mathematics and Statistics,
University of Regina, Regina, SK, Canada \\
(sfallat@uregina.ca).}
    \and  Samir Mondal\thanks{Department of Mathematics and Statistics,
University of Regina, Regina, SK, Canada 
(Samir.Mondal@uregina.ca).} }
    \date{\today}
 \maketitle

%\begin{abstract}
%We characterize bijective linear preservers on $M_n(\mathbb{R})$ associated with infinite divisibility of non-negative matrices. More precisely, we obtain complete characterizations of bijective linear maps preserving (i) non-negativity, (ii) strongly infinitely divisible matrices, and (iii) infinitely divisible matrices. Our proofs combine the Inverse Function Theorem with Zariski-density techniques, building on the recent approach of Fallat and Mondal [\textit{Proc. Amer. Math. Soc.}, 2026].
%{\color{red} We characterize bijective linear preservers on $M_n(\mathbb{R})$ associated with the notion of infinite divisibility of nonnegative matrices. Specifically, we provide complete classifications of the bijective linear maps that preserve (i) nonnegativity, (ii) strong infinite divisibility, and (iii) infinite divisibility. Our approach combines techniques from differential topology and algebraic geometry, most notably the Inverse Function Theorem and Zariski-density arguments. Building on the framework recently applied in Fallat and Mondal [\textit{Proc. Amer. Math. Soc.}, 2026], we show that the preservers of these matrix classes are precisely those arising from standard monomial similarities. }
%\end{abstract}

\begin{abstract}
An infinitely divisible nonnegative matrix is an entry-wise nonnegative matrix that admits an entry-wise nonnegative $m$th root, with respect to usual matrix multiplication, for every positive integer $m$; it is called strongly infinitely divisible when it is, in addition, invertible. The study of such matrices has its origins in the theory of infinitely divisible probability distributions and the embedding problem for Markov matrices and is closely connected with continuous one-parameter semigroups; in the invertible case, this connection admits a natural description in terms of the exponential map. In this paper, we characterize the bijective linear maps on $M_n(\mathbb{R})$ that preserve strongly infinitely divisible matrices and infinitely divisible nonnegative matrices. In the former case, we combine the Inverse Function Theorem with Zariski-density techniques, using the Zariski-density approach recently developed in linear preserver theory by Fallat and Mondal [\textit{Proc. Amer. Math. Soc.}, 2026]. 
In the latter case, we first show that preserving infinite divisibility
forces preservation of the cone of entry-wise nonnegative matrices and
then exploit the additional structure afforded by infinite divisibility
to complete the characterization.

% (X) In the former case, we combine the Inverse Function Theorem with Zariski-density techniques, following a recent approach developed in linear preserver theory by Fallat and Mondal [\textit{Proc. Amer. Math. Soc.}, 2026], and in the latter case we focus on additional structure afforded to infinitely divisible matrices beyond entry-wise nonnegativity.

% (X) In the former case, we combine the Inverse Function Theorem with Zariski-density techniques, using a recent Zariski-density approach developed in linear preserver theory by Fallat and Mondal [\textit{Proc. Amer. Math. Soc.}, 2026]. In the latter case, we exploit the additional structure afforded by infinite divisibility beyond entry-wise nonnegativity.

\end{abstract}

% edit keywords and MSC as needed
\noindent Keywords: Linear Preservers, matrix exponential, matrix logarithm, roots of a matrix, nonnegative matrices,  (strongly) infinitely divisible matrix, Zariski topology, inverse function theorem.

\noindent AMS-MSC: 15A86, 15A16 (primary); 54C05, 15B48 (secondary)

\section{Introduction and Main Results}
Linear preserver problems form a classical and active research area of matrix and operator theory concerned with determining the general form of a linear operator on spaces of matrices or bounded linear operators that leave certain functions, subsets, relations, or other structures invariant. Typical examples include transformations that preserve rank, determinant, spectrum, invertibility, or various forms of positivity; see, for example, \cite{GutermanLiSemrl2000,LiPierce2001,Molnar2007} for general accounts of the subject. The theory goes back to Frobenius's study of determinant-preserving transformations \cite{Frobenius1897}, and subsequent work has addressed preservers of rank and related matrix properties \cite{MarcusMoyls1959}, the unitary group \cite{Marcus1959}, the spectrum \cite{JafarianSourour1986} and invertibility \cite{Sourour1996}. Preserver problems involving positivity and related matrix classes have also received considerable attention, including certain positivity classes \cite{BermanHershkowitzJohnson1985}, the copositive cone \cite{Shitov2021}, the completely positive rank \cite{Shitov2023}, and sign regularity \cite{ChoudhuryYadav2025SignRegularity}. More recently, Fallat and Mondal \cite{FallatMondal2026} employed Zariski-density arguments in studying linear preservers of real matrix classes admitting a real logarithm, apparently the first such use in linear preserver theory. The Zariski-density techniques developed there also play a central role in our analysis.

A classical result of Beasley and Laffey~\cite{BeasleyLaffey1990} shows that bijective linear maps preserving invertibility are of the form
\begin{equation}\label{standardtransformations}
\varphi(A)=PAQ \qquad \text{or} \qquad \varphi(A)=PA^{t}Q,
\end{equation}
where $P$ and $Q$ are invertible. Such mappings are referred to as \textit{standard transformations} (or transformations of standard form)~\cite{LiPierce2001}. In the infinite-dimensional setting, Sourour~\cite{Sourour1996} obtained analogous structure results for bijective invertibility-preserving linear maps on algebras of bounded linear operators on Banach spaces.

To study the matrix class considered in this paper, we recall some basic facts concerning matrix roots and logarithms. For a positive integer $k$, a matrix $X\in M_n(\mathbb{C})$ is called a $k$th root of $A$ if $X^k=A$; see \cite{hig} for the general theory of matrix roots. Similarly, $B\in M_n(\mathbb{C})$ is a logarithm of $A$ if $A=e^B$. Every invertible complex matrix admits a logarithm \cite[Theorem 2.10]{hall}, and therefore every matrix in $\mathrm{GL}_n(\mathbb{C})$ is a matrix exponential.

The situation in the real field is more restrictive. We say that $A\in M_n(\mathbb{R})$ has \textit{all real roots} if, for every positive integer $k$, there exists $X\in M_n(\mathbb{R})$ such that $X^k=A$, and define
$\mathcal{G}
=
\left\{
A\in\mathrm{GL}_n(\mathbb{R}) :
A\text{ has all real roots}
\right\}.$
A fundamental characterization \cite{MST} shows that
$\mathcal{G}
=
K_n^*
=
\left\{
e^B : B\in M_n(\mathbb{R})
\right\}.
$ 

Thus, a real invertible matrix has roots of every positive integer order precisely when it admits a real logarithm. However, unlike the complex case, not every real invertible matrix has this property. For example,
$
\det(e^B)=e^{\operatorname{tr}(B)}>0,
$
so a real matrix with negative determinant cannot belong to $\mathcal{G}$. The precise obstruction to the existence of a real logarithm is described by the following classical theorem of Culver.

\begin{thm}[Culver, \cite{culver}]
\label{existreallog}
{\rm Let $A\in M_n(\mathbb{R})$. Then $A$ has a real logarithm, That is, there exists a real matrix $X$ such that $A=e^X$, if and only if the following conditions are satisfied:
\begin{enumerate}
    \item[(i)] $A$ is invertible.
    \item[(ii)] Each elementary divisor (Jordan block) of $A$ belonging to a negative eigenvalue occurs an even number of times.
\end{enumerate}}
\end{thm}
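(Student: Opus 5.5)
We reduce to Jordan form. Since $A$ is real, there is a real invertible $S$ with $A=SJS^{-1}$, where $J$ is the real Jordan form of $A$. If $X$ is a real logarithm of $J$, then $SXS^{-1}$ is a real logarithm of $A$, and conversely. So it suffices to treat $J$ and to assemble a logarithm block by block: if each diagonal block $J_i$ has a real logarithm $X_i$, then $\bigoplus_i X_i$ is a real logarithm of $J$.

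\textbf{Necessity.}
\begin{itemize}
\item Condition (i) follows from $\det(e^X)=e^{\operatorname{tr}X}\neq 0$.
\item For (ii), suppose $A=e^X$ with $X$ real, and let $\mu$ be an eigenvalue of $X$. Since $z\mapsto e^z$ has nonvanishing derivative, $e^X$ has the same Jordan structure at $e^\mu$ as $X$ has at $\mu$. Precisely, each Jordan block $J_k(\mu)$ of $X$ yields a Jordan block $J_k(e^\mu)$ of $A$.
\item If $e^\mu<0$, then $\operatorname{Im}\mu=(2j+1)\pi\neq 0$, so $\mu$ is nonreal.
\item Because $X$ is real, its nonreal eigenvalues come in conjugate pairs $\mu,\bar\mu$ with identical block structure. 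Both members of a pair map to the same negative number $e^\mu=e^{\bar\mu}$.
\item Hence the Jordan blocks of $A$ at a negative eigenvalue $\lambda$ are a disjoint union of conjugate-paired families: those coming from eigenvalues of $X$ with positive imaginary part and their mirror images. Each block size therefore occurs an even number of times.
\end{itemize}
The one point needing care is the claim that $f(X)$ for $f=\exp$ sends $J_k(\mu)$ to a matrix similar to $J_k(e^\mu)$. This holds because $f(J_k(\mu))$ is upper triangular Toeplitz with superdiagonal entry $f'(\mu)\neq0$, so it is a single Jordan block.

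\textbf{Sufficiency.} We build real logarithms for each type of real Jordan block.
\begin{itemize}
\item \emph{Positive eigenvalue.} For $\lambda>0$, write $J_k(\lambda)=\lambda(I+N)$ with $N$ real nilpotent. Then
\[
\log J_k(\lambda)=(\log\lambda)I+\sum_{m=1}^{k-1}\frac{(-1)^{m+1}}{m}N^m
\]
is a real logarithm, since the series terminates.
\item \emph{Nonreal conjugate pair.} For $a\pm ib$ with $b\neq0$, the real Jordan block is $C\otimes I+I\otimes N$-type, with $C=\begin{pmatrix}a&-b\\ b&a\end{pmatrix}$. Write $C=e^{L}$ with $L=\begin{pmatrix}\log r&-\theta\\ \theta&\log r\end{pmatrix}$, where $a+ib=re^{i\theta}$. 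The block is then $(I\otimes C)(I+N\otimes C^{-1})$, and these two factors commute. So $I\otimes L$ plus the terminating log series of the unipotent factor gives a real logarithm, because commuting logarithms add.
\item \emph{Negative eigenvalue.} For $\lambda<0$, hypothesis (ii) lets us pair the blocks into copies $J_k(\lambda)\oplus J_k(\lambda)$. Each such pair is similar over $\mathbb{R}$ to $J_k(\lambda)\otimes I_2$. Writing $\lambda I_2=|\lambda|\begin{pmatrix}\cos\pi&-\sin\pi\\ \sin\pi&\cos\pi\end{pmatrix}$, we are back in the previous case with $b=0$ but $\theta=\pi$. The same construction yields a real logarithm of $J_k(\lambda)\otimes I_2$.
\end{itemize}
Conjugating back by $S$ finishes the argument.

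The main obstacle is the necessity direction: one must track Jordan structure under $\exp$ and check that negative eigenvalues can only arise from conjugate pairs. The sufficiency direction is routine once the blocks are paired.
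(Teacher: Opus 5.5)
The paper does not prove this theorem; it quotes it from Culver as background, so there is no in-paper argument to compare against. Your proof is correct and is the standard one: the Jordan structure of $e^X$ plus conjugate pairing gives necessity, and blockwise real logarithms on the real Jordan form give sufficiency.

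Two details deserve one line each:
\begin{enumerate}
\item The phrase ``$e^X$ has the same Jordan structure at $e^\mu$ as $X$ has at $\mu$'' is only right when $\mu$ is the sole preimage. What you actually need, and then use, is that the blocks of $A$ at $\lambda$ are the union of the $J_k(e^\mu)$ over all $\mu$ with $e^\mu=\lambda$.
\item In the complex-pair and paired-negative cases, it is the logarithms $I\otimes L$ and $\log(I+N\otimes C^{-1})$ themselves that must commute, not just the two factors. This holds because $L$ and $C$ both lie in the commutative algebra spanned by $I_2$ and $\left(\begin{smallmatrix}0&-1\\1&0\end{smallmatrix}\right)$.
\end{enumerate}
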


This characterization shows that $\mathcal{G}$ is a proper and structurally distinguished subset of $\mathrm{GL}_n(\mathbb{R})$. It is therefore natural to ask which linear transformations preserve this class. This question was recently resolved by Fallat and Mondal \cite{FallatMondal2026}, who obtained the following characterization.

\begin{thm}[Fallat and Mondal, \cite{FallatMondal2026}]
Let $\varphi:M_n(\mathbb{R})\to M_n(\mathbb{R})$ be a bijective linear map. Then
$\varphi(\mathcal{G})=\mathcal{G}$ 
if and only if there exist $P\in\mathrm{GL}_n(\mathbb{R})$ and $c>0$ such that
$\varphi(A)=cPAP^{-1}$ or $\varphi(A)=cPA^TP^{-1}$, 
for all $A\in M_n(\mathbb{R})$.
\end{thm}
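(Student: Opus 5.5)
The sufficiency direction is immediate. If $\varphi(A)=cPAP^{-1}$ with $c>0$ and $A=e^B$, then $\varphi(A)=e^{(\log c)I+PBP^{-1}}\in\mathcal{G}$. Transposition also commutes with the exponential, since $(e^B)^T=e^{B^T}$. The inverse map has the same form, so $\varphi(\mathcal{G})=\mathcal{G}$.

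For necessity, I plan three steps: show that $\varphi$ preserves invertibility, pass to standard form via Beasley--Laffey, and then pin down the constants. The key observation is that $\mathcal{G}=\{e^B\}$ contains a nonempty Euclidean-open subset of $M_n(\mathbb{R})$. By the Inverse Function Theorem, $\exp$ is a local diffeomorphism near $B=0$, so $\mathcal{G}$ contains a neighbourhood of $I$. The same holds near any $e^B$ with $B$ having no pair of eigenvalues differing by a nonzero multiple of $2\pi i$. A nonempty Euclidean-open set is Zariski dense in $M_n(\mathbb{R})$, and in fact $\mathcal{G}$ is dense in the set of matrices with positive determinant.

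To get invertibility preservation, I would consider the polynomial $p(A)=\det\varphi(A)$. Since $\varphi(\mathcal{G})\subseteq\mathcal{G}\subseteq \mathrm{GL}_n$ and $\det>0$ on $\mathcal{G}$, the polynomial $p$ is positive on $\mathcal{G}$. By continuity and density, $p\ge 0$ on the closure of $\mathcal{G}$, which contains $\{\det\ge 0\}$. I would like to conclude that $p$ vanishes only where $\det$ does.

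A cleaner route is to look at the zero sets. The set $\{p=0\}$ is a hypersurface $\varphi^{-1}(\{\det=0\})$. It must avoid $\mathcal{G}$, and symmetrically $\varphi^{-1}$ maps $\mathcal{G}$ into $\mathcal{G}$. I would show that $\{\det=0\}$ is exactly the boundary of $\mathcal{G}$ within the region $\{\det\ge0\}$, roughly up to the negative-eigenvalue pieces. The linear homeomorphism $\varphi$ maps $\overline{\mathcal{G}}$ onto itself and the interior onto the interior. Hence $\varphi$ maps the singular matrices, which form the topological boundary of $\{\det>0\}=\overline{\mathcal{G}}^{\circ}$, onto themselves.

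Making this precise is the main obstacle. One needs the interior of $\overline{\mathcal{G}}$ to be exactly $\{\det>0\}$. That requires showing that matrices with negative eigenvalues of odd multiplicity but positive determinant are limits of elements of $\mathcal{G}$. This holds by perturbing the negative eigenvalue pairs into complex conjugate pairs. If this is delicate, I would instead use the Zariski argument: the irreducible polynomial $\det$ and the polynomial $p$ have the same real zero locus on a Zariski-dense set, so $p=\lambda\det$.

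Once invertibility is preserved, Beasley--Laffey gives $\varphi(A)=PAQ$ or $PA^TQ$. Composing with transposition, I may assume $\varphi(A)=PAQ$.

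Applying $\varphi$ to $I$ gives $PQ\in\mathcal{G}$. To identify $Q$, I would take $A=e^{tB}$ for all $B$ and use the fact that $Pe^{tB}Q$ has all real roots. Writing $M=QP$, the matrix $Pe^{tB}Q$ is similar to $e^{tB}M$. So the condition becomes that $e^{tB}M\in\mathcal{G}$ for every $B$ and every $t$.

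Choose $B$ so that $e^{B}M$ has a simple negative eigenvalue. This is possible unless $M$ is a positive scalar, since $\{e^B\}$ together with $M$ generates enough freedom: products $e^BM$ range over an open set in which Culver's condition, Theorem~\ref{existreallog}, fails. Such a choice contradicts membership in $\mathcal{G}$, so $M=cI$ with $c>0$. Then $Q=cP^{-1}$ and $\varphi(A)=cPAP^{-1}$.

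The step I expect to need care is this last one: exhibiting, for non-scalar $M$, an exponential $e^B$ with $e^BM$ violating Culver's condition. My first attempt would be a $2\times2$ reduction on a suitable invariant pair of vectors. Failing that, I would use Zariski density of $\{e^B\}M$ in the component of $\mathrm{GL}_n$ with determinant sign $\operatorname{sgn}\det M$.
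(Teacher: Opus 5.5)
The paper only cites this theorem. Your outline is the template it uses for $\mathbf{SIDM}$: an open set via the Inverse Function Theorem, then $\varphi(\mathrm{GL}_n(\mathbb R))=\mathrm{GL}_n(\mathbb R)$, then Beasley--Laffey, then pinning down $QP$. Your sufficiency argument is fine.

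\textbf{The invertibility step has a genuine gap.} Your topological route needs $\mathcal G$ to be dense in $\{\det>0\}$, i.e.\ $\overline{\mathcal G}^{\circ}=\{\det>0\}$. This is false for $n\ge2$. A simple real eigenvalue of a real matrix stays real under small real perturbations, so distinct negative eigenvalues cannot be pushed into a conjugate pair. For example, every matrix near $\operatorname{diag}(-1,-2)\oplus I_{n-2}$ has simple eigenvalues near $-1$ and $-2$. So a whole neighbourhood of this positive-determinant matrix misses $\mathcal G$. More generally, a negative eigenvalue of odd algebraic multiplicity puts a matrix in the exterior of $\mathcal G$. In fact $\overline{\mathcal G}^{\circ}=\operatorname{int}\mathcal G=\Omega:=\{A:\sigma(A)\cap(-\infty,0]=\emptyset\}$. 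Moreover $\partial\Omega$ contains invertible matrices such as $-I_2\oplus I_{n-2}$ and $J_2(-1)\oplus I_{n-2}$, so the boundary of the preserved open set is not the singular locus.

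Your Zariski fallback has no input. On $\mathcal G$ the polynomials $p=\det\circ\varphi$ and $\det$ are merely both nonzero. Since $\mathcal G$ is Zariski dense in all of $M_n(\mathbb R)$, density says nothing about where $p$ vanishes. No argument using only ``$\varphi$ preserves a set with nonempty interior inside $\mathrm{GL}_n(\mathbb R)$'' can work. The map $A\mapsto A-2a_{12}E_{12}$ is a Frobenius isometry fixing $I$, so it preserves every small ball about $I$. Yet it sends the singular matrix $\left(\begin{smallmatrix}1&1\\1&1\end{smallmatrix}\right)\oplus I_{n-2}$ to an invertible one. This example also defeats the inference in Lemma~\ref{lem:SIDM_preserver_GL}: there the relative Zariski closure in $\mathrm{GL}_n(\mathbb R)$ is transported by $\phi$ only if $\phi(\mathrm{GL}_n(\mathbb R))=\mathrm{GL}_n(\mathbb R)$ is already known. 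So you cannot import that lemma.

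What is missing is an argument that uses the finer structure of $\mathcal G$ to separate the singular hypersurface from the other boundary strata of $\Omega$. One way: $\varphi$ preserves $\partial\Omega$, so it permutes the irreducible components of its Zariski closure $\{\det\cdot\Delta=0\}$, where $\Delta$ is the discriminant of the characteristic polynomial. Then $\{\det=0\}$ cannot be carried onto $\{\Delta=0\}$. For $n\ge3$ the degrees are $n$ and $n(n-1)$. For $n=2$ the quadratic forms $ad-bc$ and $(a-d)^2+4bc$ have ranks $4$ and $3$.

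\textbf{Your last step is only asserted, and Zariski density cannot settle it.} Density of $\{e^B\}M$ does not force this set to meet the Euclidean-open region where Culver's condition fails. For $M=I$ the set is $\mathcal G$ itself, which is dense but never meets that region. A clean way to finish:
\begin{itemize}
\item From $Pe^BQ=P(e^BM)P^{-1}$ you get $\mathcal GM\subseteq\mathcal G$.
\item The set $H=\{M\in\mathrm{GL}_n(\mathbb R):\mathcal GM\subseteq\mathcal G\}$ is a normal subgroup of $\mathrm{GL}_n(\mathbb R)$, using $\mathcal G^{-1}=\mathcal G$ and the similarity invariance of $\mathcal G$.
\item For $n\ge2$, $H$ does not contain $\mathrm{SL}_n(\mathbb R)$, since $\operatorname{diag}(-2,-\tfrac12)\oplus I_{n-2}\notin\mathcal G$. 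Hence $H$ is central.
\item $-I\notin H$, because $-\operatorname{diag}(2,1,\dots,1)\notin\mathcal G$.
\end{itemize}
Therefore $M=cI$ with $c>0$, and $\varphi(A)=cPAP^{-1}$.
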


 In this paper, we study linear operators that preserve infinite divisibility within the cone of non-negative matrices. The theory of infinite divisibility, originating in probability and Markov processes, is closely intertwined with semigroup theory, particularly through the embedding of matrices into continuous one-parameter semigroups. In this setting, Ott (see \cite{Ott1972, Ott1975}) showed that an infinitely divisible entry-wise nonnegative matrix can be characterized by its embedding into a continuous semigroup of nonnegative matrices. In the invertible case, Van-Brunt~\cite[Theorem 1]{van} obtained a further characterization in terms of the exponential map later.

Now we will focus on the main classes of matrices under consideration.  The nonnegativity of arrays in our discussion is meant to be entry-wise. 
			\begin{defn} {\rm
				A nonnegative matrix $A\in\mnr$ is said to be an {\it infinitely divisible matrix} 
				{($\IDM_n$, $A\in\IDM_n$)} 
				if  there exists a sequence of nonnegative matrices $\{K_m\}_{m=1}^{\infty}$ such that 
				$(K_m)^m=A.$
				In other words, $A$ is infinitely divisible if for every positive integer $m$,
				there exists an $m$th matrix root of $A$ that is a nonnegative matrix.
				If an infinitely divisible matrix $A$ is also invertible, then 
				we refer to $A$ as a {\it strongly infinitely divisible matrix}
				{($\SIDM_n$, $A\in\SIDM_n$)}.		
			}
		\end{defn}
We may leave off the subscript for the sets above when the dimension is evident from context.

In contrast, we note that the terminology \textit{infinitely
divisible matrix} is also used in the literature with respect to positive
semidefinite matrices; see, for example, Horn~\cite{Horn1969}. In
this setting, infinite divisibility refers to the preservation of
positive semidefiniteness under positive fractional entrywise
(Hadamard) powers. This differs from our notion of infinite
divisibility, which is defined in terms of nonnegative roots under
ordinary matrix multiplication.

% \begin{thm} {\rm \cite[Theorem 1]{van}}
% 			\label{SIDM}
% The matrix $A\in\mnr$ is \SIDM\ if and only if there exists an essentially nonnegative $B$ such that $A^t=e^{tB}$ for all $t\geq 0$. 
% 			%\sout{In particular, $A=e^{B}$. Moreover, $B$ can be chosen to be
% 			%the principal logarithm of $A$, $\log A$.}} {\bf I believe we don't need to include this %sentence here since we mention it in the next corollary.}
% 		\end{thm}
		
% Let $F$ be a family of $n\times n$ matrices. The connection between infinite divisibility and semigroup structures is captured by continuous semigroups. A \textit{continuous semigroup} in $F$ is a continuous mapping
% $[0,\infty)\to F$, $t\mapsto B_t$,
% satisfying $B_{t+s}=B_tB_s$ for all $s,t\ge0$. The following characterization due to Ott is fundamental.

% \begin{thm}[Ott~\cite{ott1975}, Theorem~1]
% A matrix $A$ is infinitely divisible if and only if there exists a continuous semigroup $\{B_t\}_{t\ge0}$ of nonnegative matrices such that $B_1=A$.
% \end{thm}

%\sout{The connection between infinite divisibility and semigroup structures
%is made precise by a fundamental result of Ott.} 
Infinite divisibility as defined here is closely connected with continuous one-parameter semigroups, a connection arising naturally in probability and Markov processes. This relationship is made more precise by a fundamental result of Ott. Recall that a
\textit{continuous semigroup} of nonnegative matrices is a continuous
family $\{B_t\}_{t\ge0}$ that satisfies
$B_{t+s}=B_tB_s$ for $s,t\ge0$.

\begin{thm}[Ott~\cite{Ott1975}, Theorem~1]
A nonnegative matrix $A$ is infinitely divisible if and only if there
exists a continuous semigroup $\{B_t\}_{t\ge0}$ of nonnegative
matrices such that $B_1=A$.
\end{thm}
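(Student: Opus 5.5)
The plan is to prove the two directions separately. Sufficiency is the easy one. Suppose $\{B_t\}_{t\ge0}$ is a continuous semigroup of nonnegative matrices with $B_1=A$. For each positive integer $m$ put $K_m=B_{1/m}$. Then $K_m$ is nonnegative, and the semigroup law gives $K_m^m=B_{m\cdot(1/m)}=B_1=A$. So $A$ is infinitely divisible, and continuity is not even needed for this direction.

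For necessity, let $A$ be infinitely divisible. The plan is to build a nonnegative semigroup along dyadic rationals and then extend it by continuity.

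\textbf{Step 1: a consistent root tower.} Choose nonnegative matrices $R_k$ with $R_0=A$ and $R_{k+1}^2=R_k$. This is the step where the plain definition is not enough: it gives some nonnegative $2^k$th root of $A$ for each $k$, but these roots need not be compatible with one another. I would use compactness. Let $S_k$ be the set of nonnegative $X$ with $X^{2^k}=A$. To see that $S_k$ is bounded, I would first split off the nilpotent part. Then I would use Perron--Frobenius together with the normal form of nonnegative matrices. The relevant facts are that $\rho(X)=\rho(A)^{1/2^k}$, and that powers of a nonnegative matrix whose peripheral spectrum is controlled grow at most polynomially. I expect this bound to need care for reducible $A$, and it is a main technical point.

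Given boundedness, each $S_k$ is compact and nonempty, and squaring maps $S_{k+1}$ into $S_k$. The images of $S_j$ inside $S_k$, taken over $j\ge k$, form a decreasing family of nonempty compact sets, so their intersection $T_k$ is nonempty. Squaring maps $T_{k+1}$ onto $T_k$. A König-type inverse-limit argument then produces the compatible sequence $(R_k)$.

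\textbf{Step 2: the dyadic semigroup.} For a dyadic rational $t=p/2^k$, define $B_t=R_k^{\,p}$. Compatibility of the tower makes this well defined, every $B_t$ is nonnegative, and $B_{s+t}=B_sB_t$ holds on the dyadics.

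\textbf{Step 3: continuity.} This is the main obstacle. First I would show that $R_k\to L$, where $L$ is the idempotent onto the part of the spectrum where the roots converge to $1$. Roots of eigenvalues $\lambda\neq0$ tend to $1$ in modulus, while the nilpotent part collapses.

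When $A$ is invertible, the tail $R_k$ lies in a compact subset of $\mathrm{GL}_n$ close to $I$. There I would take the principal logarithm, $B_t=e^{tC}$ with $C=2^k\log R_k$ independent of $k$. Continuity is then immediate, and nonnegativity passes to all real $t$ by density of the dyadics.

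In the singular case I would decompose $\mathbb{R}^n=\operatorname{im}(L)\oplus\ker(L)$, both invariant. On $\operatorname{im}(L)$ I would use a logarithm as above. On $\ker(L)$ the $B_t$ for $t>0$ are forced to be zero on the nilpotent part. Then I would set $B_0=L$, which may differ from $I$. The theorem's notion of continuous semigroup allows this, since it does not require $B_0=I$.

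Finally, I would extend from the dyadics to all real $t$ by uniform continuity on compact intervals. Nonnegativity and the semigroup law survive the limit.
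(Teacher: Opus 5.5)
The paper only cites Ott's theorem without proof, so your argument has to stand on its own. Sufficiency is fine. Necessity has two genuine gaps.

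First, the compactness in Step~1 is not available, because the sets $S_k$ are in general unbounded, even for invertible $A$. For $A=I_2$, the matrices $X_x=\begin{pmatrix}0&x\\ x^{-1}&0\end{pmatrix}$ with $x>0$ are nonnegative and satisfy $X_x^2=I$, so $X_x\in S_k$ for every $k\ge1$. For $A=0$, the matrices $xE_{12}$ do the same. Perron--Frobenius controls $\rho(X)$, not the entries of $X$, and the monomial-type (imprimitive) and nilpotent parts of a root are not visible in $A$. So the nested-compact-sets and K\"onig argument has nothing to run on. You would need a separate proof that suitable eventual images are compact.

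Second, and more fundamentally, your construction only uses nonnegative roots of order $2^k$, and dyadic divisibility does not suffice. Let $C$ be the $3$-cycle permutation matrix.
\begin{itemize}
\item Since $C=(C^2)^2$, it has nonnegative $2^k$th roots for all $k$.
\item It has no nonnegative cube root. A nonnegative root of a permutation matrix is monomial, and no $\Pi\in S_3$ has $\Pi^3$ equal to a $3$-cycle.
\item So, by your own sufficiency argument, $C$ lies on no continuous nonnegative semigroup.
\end{itemize}
For $A=C$ each $S_k$ is a singleton. The tower is therefore forced to be $R_k=C$ for even $k$ and $R_k=C^2$ for odd $k$, and your Step~3 claim that $R_k\to L$ (in the invertible case, that the tail of $(R_k)$ is near $I$) is false.

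The same failure occurs for genuinely infinitely divisible $A$. Take $\tau=4\pi/(3\sqrt{3})$. Checking on the eigenvectors of the circulant gives $C=e^{\tau(C^2-C)}$. Hence $A=e^{\tau(C-I)}$ is strongly infinitely divisible. Now set $R_0=A$ and $R_k=e^{2^{-k}\tau(C^2-I)}C^{j_k}$ for $k\ge1$, with $j_k=1$ for odd $k$ and $j_k=2$ for even $k$. This is a compatible nonnegative tower, and it oscillates between neighbourhoods of $C$ and $C^2$. The resulting dyadic semigroup is discontinuous at $0$, and nothing in your selection excludes this tower.

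Your observation that roots of eigenvalues tend to $1$ in modulus says nothing about their arguments. Perron--Frobenius allows near-peripheral eigenvalues close to $h$th roots of unity for any $h\le n$. The missing ingredients are:
\begin{itemize}
\item roots whose orders are divisible by every $h\le n$, for instance an $N$-adic tower with $N=n!$, or a Kingman-type limit of $m(K_m-I)$ along $m=N^k$;
\item a quantitative bound on the arguments of eigenvalues of nonnegative matrices near the spectral radius.
\end{itemize}
Together these force the chosen roots to converge to an idempotent. Only then do the logarithm, splitting and density arguments of Step~3 go through.
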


In the invertible case, this semigroup representation leads to a
particularly useful characterization of strongly infinitely divisible
matrices. Van-Brunt~\cite{van} proved the following result; a new proof
based on Ott's theorem was later given in~\cite{MST}.

\begin{thm}[Van-Brunt~\cite{van}, Theorem~1]
\label{SIDM}
A matrix $A\in M_n(\mathbb R)$ is strongly infinitely divisible if
and only if there exists an essentially nonnegative matrix $B$ such
that $A=e^B$,
where essentially nonnegative means that all off-diagonal entries of
$B$ are nonnegative.
\end{thm}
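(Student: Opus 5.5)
We prove both directions of Theorem~\ref{SIDM}. The sufficiency direction is elementary. The necessity direction rests on Ott's theorem, together with a differentiability argument for the semigroup.

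\emph{Sufficiency.} Suppose $A=e^B$ with $B$ essentially nonnegative. Choose $s\ge 0$ with $B+sI\ge 0$ entrywise. Since $B$ and $sI$ commute,
\[
e^{B/m}=e^{-s/m}e^{(B+sI)/m}.
\]
The power series of $e^{(B+sI)/m}$ has only nonnegative terms, so $e^{B/m}$ is a nonnegative matrix. Moreover $(e^{B/m})^m=e^B=A$, and $A$ is invertible. Hence $A\in\SIDM_n$.

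\emph{Necessity.} Let $A\in\SIDM_n$. By Ott's theorem there is a continuous semigroup $\{B_t\}_{t\ge0}$ of nonnegative matrices with $B_1=A$. Invertibility spreads along the semigroup: for $t\in[0,1]$ we have $B_tB_{1-t}=A$, so $B_t$ is invertible, and the semigroup law then gives invertibility for all $t\ge 0$. Also $B_0=B_0^2$ is invertible, so $B_0=I$.

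Next we show the semigroup is an exponential, by the standard integration trick for continuous matrix semigroups.
\begin{enumerate}
\item By continuity at $0$, $V_h=\int_0^h B_s\,ds$ satisfies $V_h=hI+o(h)$, so $V_h$ is invertible for small $h>0$.
\item The semigroup law gives $B_tV_h=\int_t^{t+h}B_s\,ds$. Hence
\[
B_t=\Bigl(\int_t^{t+h}B_s\,ds\Bigr)V_h^{-1},
\]
which is differentiable in $t$.
\item Differentiating $B_{t+s}=B_tB_s$ in $s$ at $s=0$ gives $B_t'=B_tG$ with $G=B'_0$. Uniqueness for this linear ODE with $B_0=I$ yields $B_t=e^{tG}$, and in particular $A=e^G$.
\end{enumerate}

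Finally, $G$ is essentially nonnegative. For $i\ne j$,
\[
G_{ij}=\lim_{t\to0^+}\frac{(B_t)_{ij}-\delta_{ij}}{t}=\lim_{t\to0^+}\frac{(B_t)_{ij}}{t}\ge 0,
\]
because each $B_t$ is nonnegative. So $A=e^G$ with $G$ essentially nonnegative.

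The main obstacle is the regularity step, namely passing from a merely continuous semigroup to a differentiable one. The integral-averaging argument above handles it. An alternative route avoids Ott's theorem: take nonnegative roots $K_m$ with $K_m^m=A$, show that $m(K_m-I)$ converges to $\log A$ along a subsequence, and read off essential nonnegativity from $K_m\ge0$. That route has its own difficulty, since one must control which logarithm arises and must show that $K_m\to I$.
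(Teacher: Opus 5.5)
Your proof is correct and follows the route the paper points to. The paper gives no proof of its own: it cites Van-Brunt and remarks that a later proof based on Ott's semigroup theorem exists, and that is exactly your strategy, with your averaging argument supplying the standard fact that a continuous matrix semigroup with $B_0=I$ has the form $e^{tG}$. All the steps check out: invertibility of $A=B_1$ forces each $B_t$ to be invertible and $B_0=I$, the identity $B_t=\bigl(\int_t^{t+h}B_s\,ds\bigr)V_h^{-1}$ gives differentiability, $B_t'=B_tG$ yields $A=e^{G}$, and $G_{ij}=\lim_{t\to0^+}(B_t)_{ij}/t\ge0$ for $i\ne j$.
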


A second motivation for studying these matrices comes from a problem
posed by C.~R.~Johnson~\cite{Johnson1982} concerning inverse $M$-matrices.
Recall that a nonnegative matrix is called an inverse $M$-matrix if
its inverse is a nonsingular $M$-matrix, and let $\IM$ denote the
class of such matrices. The structure and characterization of inverse
$M$-matrices have been studied extensively; see, for example,
\cite{FM,FHS,Johnson1982} and the references therein.

Johnson~\cite{Johnson1982} asked whether $\IM$ coincides with the class of
nonsingular nonnegative matrices that admit nonnegative roots of every
positive integer order. In the terminology used here, the question is
whether every strongly infinitely divisible matrix is an inverse
$M$-matrix. This question was answered in negative terms by
Van-Brunt~\cite{van}. More precisely, every inverse $M$-matrix is
strongly infinitely divisible, whereas the converse does not hold.
Consequently,
$
\IM\subsetneq\SIDM.
$
Interestingly, the situation is different in dimension two.
 in~\cite{MST} it was shown that for $2\times2$ matrices these  two classes
coincide.

\begin{thm}\cite{MST}\label{imsidm2}
{\rm Let $A\in M_2(\mathbb R)$. Then
$
A\in\IM$
if and only if
$A\in\SIDM.
$}
\end{thm}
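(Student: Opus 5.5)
The plan is to reduce both classes to one elementary description in dimension two: a nonnegative matrix with positive determinant. Then I show directly that each class equals this set. On the $\SIDM$ side I use the characterization of Theorem~\ref{SIDM} (Van-Brunt), so that everything is checked with explicit $2\times 2$ formulas.

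\emph{Step 1: $\IM_2$ consists exactly of the nonnegative $A$ with $\det A>0$.} Write
\[
A=\begin{pmatrix} a & b\\ c & d\end{pmatrix}\ge 0 \quad\text{with } \delta=\det A\neq 0, \qquad
A^{-1}=\frac1\delta\begin{pmatrix} d & -b\\ -c & a\end{pmatrix}.
\]
\begin{itemize}
\item Suppose $\delta>0$. Then $A^{-1}$ has nonpositive off-diagonal entries and nonnegative diagonal entries. It is a nonsingular $Z$-matrix with positive determinant. Its trace $(a+d)/\delta$ is positive, because $a=d=0$ would give $\delta=-bc\le 0$. For a $2\times2$ $Z$-matrix, positive trace together with positive determinant gives positive real parts of both eigenvalues, so $A^{-1}$ is a nonsingular $M$-matrix.
\item Conversely, a nonsingular $M$-matrix has positive determinant (its eigenvalues have positive real parts and come in conjugate pairs). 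Hence $A\in\IM$ forces $\delta>0$.
\end{itemize}

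\emph{Step 2: $\SIDM_2\subseteq\IM_2$.} Let $A\in\SIDM_2$. By Theorem~\ref{SIDM}, $A=e^B$ with $B$ essentially nonnegative.
\begin{itemize}
\item Choose $s\ge0$ with $B+sI\ge0$. Then $A=e^{-s}e^{B+sI}\ge0$, since the exponential series of a nonnegative matrix is nonnegative.
\item Also $\det A=e^{\operatorname{tr}B}>0$.
\end{itemize}
Step 1 then gives $A\in\IM$.

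\emph{Step 3: $\IM_2\subseteq\SIDM_2$.} Let $A\ge0$ with $\delta>0$. I will construct an essentially nonnegative logarithm $B$ and then apply Theorem~\ref{SIDM}.

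First, the eigenvalues are real and positive:
\begin{itemize}
\item The discriminant is $(a-d)^2+4bc\ge0$, so both eigenvalues $\lambda_1\ge\lambda_2$ are real.
\item Their product $\delta$ is positive and their sum $a+d$ is nonnegative, so both are positive.
\end{itemize}

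Case $\lambda_1>\lambda_2$. Take the principal logarithm, written by Lagrange interpolation as
\[
B=\log A=\alpha I+\beta A,\qquad \beta=\frac{\log\lambda_1-\log\lambda_2}{\lambda_1-\lambda_2}>0,
\]
with $\alpha$ a suitable real number. The off-diagonal entries of $B$ are $\beta b\ge0$ and $\beta c\ge0$, so $B$ is essentially nonnegative and $e^B=A$.

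Case $\lambda_1=\lambda_2=\lambda$. The discriminant vanishes, which forces $a=d=\lambda$ and $bc=0$. So $A=\lambda I+N$ with $N\ge0$ nilpotent, $N^2=0$. Set
\[
B=(\log\lambda)I+\lambda^{-1}N .
\]
Since $N^2=0$, we get $e^B=\lambda(I+\lambda^{-1}N)=A$, and the off-diagonal part $\lambda^{-1}N$ is nonnegative.

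In both cases Theorem~\ref{SIDM} gives $A\in\SIDM$.

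\emph{Main obstacle.} The only delicate point is ensuring the logarithm in Step 3 has nonnegative off-diagonal entries. The interpolation formula settles this, because $\beta>0$ by monotonicity of $\log$; the repeated-eigenvalue case is handled separately as above.
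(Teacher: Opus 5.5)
The paper does not prove this statement; it quotes it from \cite{MST}, so there is no internal argument to compare against. Your proof is correct as written.

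In Step~1, ruling out $a=d=0$ via $\delta=-bc\le0$ is exactly what makes $A^{-1}$ a $Z$-matrix with positive trace and determinant, and hence a nonsingular $M$-matrix. Step~3 is sound in both eigenvalue cases. The key point there is that the coefficient $\beta$ of $A$ in $\log A$ is the first divided difference $f[\lambda_1,\lambda_2]$ of $f=\log$, hence positive. The paper relies on the same fact for the off-diagonal entries $(\log T)_{12}$ and $(\log T)_{23}$ in Remark~\ref{rem:3x3-triangular-IDM}.

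A by-product of your route is worth noting: Steps~2 and~3 together show $\SIDM_2=\{A\ge0:\det A>0\}$. That is Theorem~\ref{thm:SIDM2-det}, which the paper also imports from \cite{MST}, so you prove both cited $2\times2$ results at once.

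You can also remove the only nontrivial input. Step~2 uses the hard direction of Theorem~\ref{SIDM} (every $\SIDM$ matrix is $e^B$ with $B$ essentially nonnegative), but this is unnecessary. A matrix $A\in\SIDM$ is nonnegative by definition and has a nonnegative square root $C$. Since $A$ is invertible, $\det A=(\det C)^2>0$. In Step~3 you only need the elementary converse: if $B$ is essentially nonnegative, then $e^{B/m}=e^{-s}e^{B/m+sI}\ge0$ for large $s$ and is an $m$th root of $e^B$. With that change your argument becomes fully self-contained and independent of Van-Brunt's theorem.
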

% For nonnegative $2\times2$ matrices, membership in $\SIDM$ admits the
% following particularly simple characterization.
% \begin{thm}
% If $A$ is a $2\times 2$ nonnegative matrix, then $A\in \SIDM$ if and only if ${\rm det}(A)>0.$
% \end{thm}
Of particular relevance to the present work is the linear preserver
problem for inverse $M$-matrices, studied in
\cite{Johnson1982,TamLiou1990}. It was shown that such preservers are
precisely of the form
$\Phi(A)=D_1(P^TAP)D_2$
or $\Phi(A)=D_1(P^TA^TP)D_2$, for all $A$,
where $D_1,D_2$ are positive diagonal matrices and $P$ is a
permutation matrix.
% \begin{thm}[Linear preservers of inverse $M$-matrices {\cite{Johnson1982, TamLiou1990}}]
% A linear map $\Phi$ satisfies $\Phi(\mathcal{IM}_n)=\mathcal{IM}_n$ if and only if it is a composition of:
% \begin{enumerate}
%   \item Positive diagonal equivalence: $A\mapsto FAE$, where $E,F>0$ are diagonal;
%   \item Permutation similarity: $A\mapsto P^T A P$;
%   \item Possibly transposition: $A\mapsto A^T$.
% \end{enumerate}
% That is,
% $$
% \Phi(A) = F\, (P^T A^{\varepsilon} P)\, E, \qquad
% A^{\varepsilon}\in\{A,A^T\}.
% $$
% \end{thm}

Our main contribution is the classification of bijective linear maps
preserving infinitely divisible matrices. We begin with the invertible
class $\SIDM$. Using the Inverse Function Theorem, we show that
\SIDM~ has nonempty Euclidean interior; more precisely, it contains
a neighborhood of the identity matrix
(Lemma~\ref{lem:SIDM_open}). Combining this local property with a
Zariski-density argument, we prove that every bijective linear map
preserving \SIDM \ also preserves
$GL_n(\mathbb R)$
(Lemma~\ref{lem:SIDM_preserver_GL}). This leads to our first main
result.

% \medskip
% \noindent
% \begin{theoremA}[Linear preservers of \SIDM]\label{thm:mainA}    
% Let
% $\varphi:M_n(\mathbb R)\to M_n(\mathbb R)$ be a bijective linear map.
% Then $\varphi(\SIDM_n)=\SIDM_n$ if and only if one of the following holds:
% \begin{enumerate}
% \item If $n=1$, there exists a scalar $c>0$ such that
% \[
% \varphi(A)=cA
% \qquad\text{for all }A\in M_1(\mathbb R).
% \]

% \item If $n=2$, there exist a permutation matrix $P$ and positive
% diagonal matrices $D_1,D_2$ such that either
% \[
% \varphi(A)=D_1PAP^TD_2
% \qquad\text{or}\qquad
% \varphi(A)=D_1PA^TP^TD_2
% \]
% for all $A\in M_2(\mathbb R)$.

% \item If $n\ge3$, there exist a scalar $c>0$ and a positive monomial
% matrix $S$ such that either
% \[
% \varphi(A)=cSAS^{-1}
% \qquad\text{or}\qquad
% \varphi(A)=cSA^TS^{-1}
% \]
% for all $A\in M_n(\mathbb R)$.
% \end{enumerate}
% \end{theoremA}

\medskip
\noindent
\begin{theoremA}[Linear preservers of \SIDM]\label{thm:mainA}
{\rm Let $\varphi:M_n(\mathbb R)\to M_n(\mathbb R)$ be a bijective linear map.
Then $\varphi(\SIDM_n)=\SIDM_n$ if and only if one of the following holds:
\begin{enumerate}
\item If $n=1$, there exists a scalar $c>0$ such that
$\varphi(A)=cA$ for all $A\in M_1(\mathbb R)$.
\item If $n=2$, there exist a permutation matrix $P$ and positive
diagonal matrices $D_1,D_2$ such that either
$\varphi(A)=D_1PAP^TD_2$ or $\varphi(A)=D_1PA^TP^TD_2$
for all $A\in M_2(\mathbb R)$.
\item If $n\ge3$, there exist a scalar $c>0$ and a positive monomial
matrix $S$ such that either $\varphi(A)=cSAS^{-1}$ or
$\varphi(A)=cSA^TS^{-1}$ for all $A\in M_n(\mathbb R)$.
\end{enumerate}}
\end{theoremA}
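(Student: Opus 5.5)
The plan is to verify sufficiency directly and then reduce necessity to the known classification of invertibility preservers. For sufficiency, take $n\ge3$ and $\varphi(A)=cSAS^{-1}$ with $S$ a positive monomial matrix and $c>0$. If $A=e^B$ with $B$ essentially nonnegative, then $\varphi(A)=e^{(\log c)I+SBS^{-1}}$. Conjugating by a positive monomial matrix permutes and positively rescales the off-diagonal entries, so $SBS^{-1}$ is again essentially nonnegative. Transposition preserves both the exponential form and essential nonnegativity, so Theorem~\ref{SIDM} gives $\varphi(\SIDM_n)\subseteq\SIDM_n$. Since $\varphi^{-1}$ has the same form, equality follows. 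For $n=2$, Theorem~\ref{imsidm2} identifies $\SIDM_2$ with $\IM_2$, and the stated maps are exactly the Johnson--Tam--Liou preservers of $\IM$. The case $n=1$ is trivial, since $\SIDM_1=(0,\infty)$.

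For necessity, the first step is Lemma~\ref{lem:SIDM_open}: $\SIDM_n$ contains a Euclidean neighborhood of $I$. To prove it, apply the Inverse Function Theorem to $\exp$ at $0$ and take the principal logarithm of $A$ near $I$. This logarithm is close to $0$, but it need not be essentially nonnegative, so a small ball around $I$ is not obviously inside $\SIDM_n$. I would instead use a neighborhood of a matrix such as $e^{J}$, where $J$ has strictly positive off-diagonal entries. Near $J$ every logarithm branch close to $J$ is still entrywise positive off the diagonal, so $\exp$ maps an open set of essentially positive matrices diffeomorphically onto an open set inside $\SIDM_n$. The same trick works at $e^{tJ}$ for any $t>0$.

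Next comes Lemma~\ref{lem:SIDM_preserver_GL}, the Zariski-density step. Since $\varphi(\SIDM)=\SIDM\subseteq GL_n$, the polynomial $\det(\varphi(X))$ is nonzero on the nonempty open set $U\subseteq\SIDM$. To show that $\varphi$ preserves singularity, I would argue that $\det\circ\varphi$ and $\det$ have the same zero set, and hence, by irreducibility of $\det$, agree up to a scalar. The plan for this:
\begin{itemize}
\item Use that $\varphi^{-1}(\SIDM)=\SIDM$ as well.
\item Observe that for $A\in\SIDM$ and $t\in\mathbb R$, the matrices $A+tI$ lie in $\SIDM$ for $t\ge0$. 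This holds because $\log(A+tI)$ near $I$ scaled up is essentially nonnegative; alternatively, note that $\IM$-type arguments fail here and use the semigroup structure instead.
\item Conclude that the polynomial $p(t)=\det\varphi(A+tI)$ has roots related to those of $\det(A+tI)$.
\end{itemize}
This is the step I expect to be the main obstacle, and I would follow the Fallat--Mondal method at this point. Because $U$ is Zariski dense, any polynomial identity verified on $U$ holds everywhere. I would show $\det\varphi(X)=\lambda\det X$ on $U$ by comparing these polynomials along lines through points of $U$, then extend the identity to all of $M_n$. Beasley--Laffey then gives $\varphi(A)=PAQ$ or $\varphi(A)=PA^tQ$.

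Finally, I would pin down $P$ and $Q$. Since $\varphi$ maps the closure-related structure into itself, $\varphi(I)\in\overline{\SIDM}$, and tangent directions at points of $\SIDM$ are preserved. Taking $A=e^{tB}$ with $t\to0$, $\varphi(I)+t\,\varphi(B)+O(t^2)$ must lie in $\SIDM$ for every essentially nonnegative $B$. Since $\SIDM$ consists of nonnegative matrices, I would first show that $\varphi$ preserves the nonnegative cone. Preservers of the nonnegative cone on $GL$ standard forms force $P$ and $Q$ to be positive monomial matrices. Writing $\varphi(A)=D_1\Pi A\Pi'D_2$ and testing on $e^{tE_{ij}}=I+tE_{ij}$ and on $3\times3$ products $e^{tE_{ij}}e^{sE_{jk}}$, which lie in $\SIDM$ only in compatible configurations, I would conclude $\Pi'=\Pi^T$ for $n\ge3$. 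The required $D_1D_2$-compatibility then forces the form $cSAS^{-1}$. For $n=2$ no such constraint arises, matching Theorem~\ref{imsidm2} and the $\IM$ preserver result.
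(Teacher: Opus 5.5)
Your outline follows the paper's architecture: Van-Brunt's theorem for sufficiency, $\IM_2=\SIDM_2$ for $n=2$, and for $n\ge3$ an open subset of $\SIDM$, Zariski density, $GL_n$-preservation, Beasley--Laffey, positive monomial $P,Q$, and finally a scalar $D_1D_2$. You are right that no ball about $I$ lies in $\SIDM$ for $n\ge2$. Since $\SIDM\subseteq M_n(\mathbb R_+)$, the ``in particular'' clause of Lemma~\ref{lem:SIDM_open} is false, and only $\exp(-U)$ with $U\subseteq\mathcal Z^\circ$ (or your neighbourhood of $e^{J}$) is open.

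The genuine gap is the step you flag yourself: nothing in your sketch proves $\varphi(GL_n(\mathbb R))=GL_n(\mathbb R)$.
\begin{itemize}
\item Your auxiliary claim is false. For $n=4$, take $A=e^{N}$ with $N=E_{12}+E_{23}+E_{34}$ and $u=1/(1+t)$. Then $(\log(A+tI))_{14}=\tfrac{u}{6}(1-u)(1-2u)<0$ for $0<t<1$. This is the only real logarithm (there is a single Jordan block), so $A+tI\notin\SIDM$.
\item More fundamentally, ``comparing $\det\circ\varphi$ with $\det$ along lines through $U$'' has no mechanism, since both are nonzero on $U$. Zariski density of an open set says nothing about $\varphi$. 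The ball $\{X:\|X-I\|_F<\tfrac12\}$ is Zariski dense in $GL_n(\mathbb R)$ and is mapped onto itself by the Frobenius-orthogonal reflection exchanging $E_{11}-E_{22}$ and $E_{12}-E_{21}$, which fixes $I$. Yet this map sends the singular $I+E_{11}-E_{22}$ to the invertible $I+E_{12}-E_{21}$.
\item Deferring to the paper does not close this. Its proof of Lemma~\ref{lem:SIDM_preserver_GL} only uses that $\varphi$ is a Zariski homeomorphism. But the Zariski closure of $\SIDM$ in $M_n(\mathbb R)$ is all of $M_n(\mathbb R)$, and closures relative to $GL_n(\mathbb R)$ are respected only once $\varphi(GL_n)=GL_n$ is already known. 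The same objection applies.
\end{itemize}

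A route that works uses the idea from your last paragraph \emph{before} any standard form is available, and does not need Beasley--Laffey.
\begin{itemize}
\item Positive diagonal matrices and $e^{tE_{ij}}=I+tE_{ij}$ lie in $\SIDM\subseteq M_n(\mathbb R_+)$. The argument of Lemma~\ref{IDMpreserver_nonnegative} then applies, with $\varphi(E_{ii})\ge0$ obtained as a limit of images of positive diagonal matrices. So $\varphi^{\pm1}$ preserve $M_n(\mathbb R_+)$, and Theorem~\ref{nonnegativepreserver} gives $\varphi(E_{ij})=\lambda_{ij}E_{\sigma(i,j)}$.
\item Positivity of the diagonal of $\varphi(I)$ sends diagonal positions to diagonal positions, say $\varphi(E_{ii})=\lambda_{ii}E_{p(i)p(i)}$.
\item The invertible tests $\diag(a_1,\dots,a_n)+sE_{ij}+tE_{ji}$ lie in $\SIDM$ iff $st<a_ia_j$. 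These force $\sigma(i,j)\in\{(p(i),p(j)),(p(j),p(i))\}$.
\item Steps~1.3 and~2 of Proposition~\ref{prop:IDM-intermediate-form} then go through, because all their test matrices are invertible; use $xy<1$ in place of $xy\le1$.
\end{itemize}

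Your final step is also only asserted. Showing $C=D_1D_2$ is scalar needs a concrete witness. In the paper it is $A(t)=e^{t(E_{12}+E_{23})}$, with $(\log(A(t)C))_{13}=t^2\bigl[\tfrac{c_3}{2}f[c_1,c_3]+c_2c_3f[c_1,c_2,c_3]\bigr]<0$ by Lemma~\ref{lem:log-dd}. Two cautions if you follow it:
\begin{itemize}
\item That lemma needs the largest of the three values at the middle index, i.e.\ $c_1\le c_3\le c_2$ with $c_1<c_2$, not $c_1<c_2<c_3$ as written in the paper.
\item Pad the remaining diagonal generically so that the real logarithm is unique.
\end{itemize}
Finally, $\Pi'=\Pi^{-1}$ needs no $3\times3$ products: $\varphi(I)=D_1\Pi\Pi'D_2\in\SIDM$ must have positive diagonal.
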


We then turn to the full class \IDM, which also contains singular
infinitely divisible matrices. By determining the action of a
preserver on the nonnegative cone and subsequently analyzing the
induced structure on the matrix units and their coefficients, we
obtain our second main result.

\medskip
\noindent
\begin{theoremB}[Linear preservers of \IDM]\label{mainthm2}    
{\rm Let
$\varphi:M_n(\mathbb R)\to M_n(\mathbb R)$ be a bijective linear map.
Then $\varphi(\IDM_n)=\IDM_n$ if and only if one of the following holds:
\begin{enumerate}
\item If $n=1$, there exists a scalar $c>0$ such that
$
\varphi(A)=cA
\qquad\text{for all }A\in M_1(\mathbb R).
$

\item If $n=2$, there exist a permutation matrix $P$ and positive
diagonal matrices $D_1,D_2$ such that either
$\varphi(A)=D_1PAP^TD_2$ or $\varphi(A)=D_1PA^TP^TD_2$
for all $A\in M_2(\mathbb R)$.
\item If $n\ge3$, there exist a scalar $c>0$ and a positive monomial matrix $S$ such that either $\varphi(A)=cSAS^{-1}$ or $\varphi(A)=cSA^TS^{-1}$ for all $A\in M_n(\mathbb R)$.
\end{enumerate}}
\end{theoremB}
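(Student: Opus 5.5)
The sufficiency direction is a direct verification. For $n=1$ the class $\IDM_1$ is $[0,\infty)$, so it is preserved by $A\mapsto cA$ with $c>0$. For $n\ge 3$, take $\varphi(A)=cSAS^{-1}$ with $S$ a positive monomial matrix. Then $\varphi$ maps nonnegative matrices to nonnegative matrices, and if $K_m^m=A$ then $\bigl(c^{1/m}SK_mS^{-1}\bigr)^m=\varphi(A)$. The transpose case works the same way, since $(K_m^T)^m=A^T$. Because the inverse of such a map has the same form, we get $\varphi(\IDM)=\IDM$. For $n=2$ we use Ott's characterization together with the $2\times 2$ structure. First, $D_1PAP^TD_2=(D_1D_2)^{1/2}\cdot D_2^{-1/2}\bigl(D_1^{1/2}\cdots\bigr)$ can be written as a positive diagonal equivalence composed with a permutation similarity. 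It therefore suffices to show that $A\mapsto DAE$ preserves $\IDM_2$ for positive diagonal $D,E$. In the invertible case this follows from Theorem~\ref{imsidm2} and the known fact that $\IM$ is invariant under positive diagonal equivalence. In the singular case we first classify singular nonnegative $2\times2$ infinitely divisible matrices: they have rank at most one, and an $m$th root argument with the Perron root shows they are exactly the nonnegative rank-one $A=uv^T$ with $v^Tu>0$, together with $0$. Since $v^TE\,Du=\sum d_ie_iu_iv_i$, this description is preserved by positive diagonal equivalence.

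For necessity, the main step is to show that $\varphi$ preserves the nonnegative cone $\mathcal N=M_n(\mathbb R)_{\ge0}$. Every nonnegative matrix is a limit of matrices of the form $\lambda I+\varepsilon$-perturbations. More usefully, for every nonnegative $B$, the matrix $e^{tB}$ lies in $\SIDM\subset\IDM$ by Theorem~\ref{SIDM}, and $(e^{tB}-I)/t\to B$. We also have $\varphi(I)\in\IDM$ and $\varphi(e^{tB})\in\IDM\subseteq\mathcal N$. Hence $\varphi(e^{tB})-\varphi(I)$ is a difference of nonnegative matrices, which is not enough on its own. So we first pin down $\varphi(I)$. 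The cone generated by $\IDM$ is contained in $\mathcal N$, and it contains all $E_{ii}$ (which are idempotent, hence in $\IDM$) and all $I+E_{ij}=e^{E_{ij}}$. Since $\varphi$ is a bijection of $\IDM$, it is a linear bijection of $\mathrm{cone}(\IDM)$. I expect this cone to equal $\mathcal N$: the extreme rays $E_{ij}$ with $i\ne j$ are limits of $\frac1t(e^{tE_{ij}}-I)$, but that involves a subtraction. Instead, note that $E_{ii}+E_{ij}$ is idempotent, hence in $\IDM$, and that every nonnegative matrix is a nonnegative combination of such matrices together with the $E_{ii}$; but $E_{ij}$ itself is not in the cone. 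So the closed cone of $\IDM$ is $\mathcal N$ only if we can produce $E_{ij}$ as a limit. This works via $(E_{ii}+E_{ij})$ with $E_{ii}$ weighted to $0$... I expect the cleanest route is to identify the extreme rays of $\overline{\mathrm{cone}}(\IDM)$. These are preserved by $\varphi$, and the rank-one members of $\IDM$ among them (the $uv^T$ with $v^Tu>0$) generate a closed cone equal to $\mathcal N$, because $E_{ij}=\lim (E_{ij}+\varepsilon E_{jj})\cdot$\,scaling. This gives $\varphi(\mathcal N)=\mathcal N$, and hence $\varphi$ permutes the extreme rays $\{\mathbb R_+E_{ij}\}$: $\varphi(E_{ij})=a_{ij}E_{\sigma(i,j)}$ with $a_{ij}>0$.

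Next we use the additional structure of $\IDM$ to constrain $\sigma$ and the $a_{ij}$. The diagonal units $E_{ii}$ are idempotent, hence in $\IDM$. The off-diagonal units are nilpotent and nonzero, so they are not in $\IDM$: a nonnegative $m$th root of a nilpotent matrix is nilpotent, and for $m\ge n$ this forces $A=0$. Likewise $E_{ii}+tE_{ij}\in\IDM$, whereas $E_{ij}+tE_{ji}\notin\IDM$ for $t>0$, because its square root would have to be nonnegative with negative-determinant behaviour. Analysing which pairs of matrix units have sums in $\IDM$ shows that $\sigma$ maps diagonal positions to diagonal positions and either preserves or reverses the pattern of the $\mathrm{row}=\mathrm{row}$ / $\mathrm{column}=\mathrm{column}$ incidences. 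This yields a permutation $\pi$ with $\sigma(i,j)=(\pi i,\pi j)$ or $(\pi j,\pi i)$. Composing with a permutation similarity and possibly the transpose, we reduce to $\varphi(E_{ij})=a_{ij}E_{ij}$, that is, a Hadamard-product map $A\mapsto W\circ A$ with $W>0$.

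Finally we determine $W$; this is the main obstacle. For $n=2$, $\IDM_2$ is described by $\det\ge0$ together with the rank-one condition. Preservation of $\det\ge0$ under the Hadamard map on all nonnegative matrices forces $w_{11}w_{22}=w_{12}w_{21}$, which says exactly that $W=de^T$ is rank one, i.e.\ $\varphi(A)=D_1AD_2$. For $n\ge3$ we need the stronger conclusion $w_{ij}=c\,s_is_j^{-1}$, that is, cycle products $w_{ij}w_{jk}w_{ki}=c^3$ and $w_{ii}=c$. I would test $\varphi$ on $e^{B}$ for suitable essentially nonnegative $B$ supported on a 3-cycle, or on inverse $M$-matrices. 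Since $\SIDM$ consists of invertible elements of $\IDM$, preservation of $\IDM$ combined with the openness and Zariski argument for invertibility gives $\varphi(\SIDM)=\SIDM$, and then Theorem~A applies directly. The step I would carry out most carefully is this reduction, namely that $\varphi$ preserves invertibility within $\IDM$. I would try to prove it by a boundary argument: singular members of $\IDM$ lie on the boundary of $\IDM$, whereas $\SIDM$ contains an open neighbourhood of $I$ and, by Lemma~\ref{lem:SIDM_open}-type arguments, an open neighbourhood of each of its elements inside the relevant set.
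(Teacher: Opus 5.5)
Your sufficiency argument, your treatment of $n=2$, and the cone step are sound. The cone step, once cleaned up, is a legitimate alternative to the paper's argument. $E_{ii}$ and $E_{ij}+\varepsilon E_{jj}$ (rank one with trace $\varepsilon>0$) lie in $\IDM$, so the closed convex cone generated by $\IDM$ is $\mathcal N$, and a linear homeomorphism preserving $\IDM$ preserves that closed cone. The paper instead uses $I+tE_{ij}\in\IDM$ to get $\varphi(I)+t\varphi(E_{ij})\ge0$ for all $t\ge0$.

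The gap is in the case $n\ge3$, in two places. First, the global form of $\sigma$ cannot be obtained from ``which pairs of matrix units have sums in $\IDM$''. That data reads as follows: $\alpha E_{ii}+\beta E_{jj}\in\IDM$; $\alpha E_{rr}+\beta E_{ab}\in\IDM$ iff $r\in\{a,b\}$; and no positive combination of two off-diagonal units is in $\IDM$ (it is nilpotent or has a simple negative eigenvalue). All of this is invariant under the cone automorphism $\psi$ that swaps $E_{12}\leftrightarrow E_{21}$ and fixes every other unit. Yet $\psi$ does not preserve $\IDM$. With $f=\log$, distinct $a_r>0$ and $s,t>0$, the matrix $A=\diag(a_1,\dots,a_n)+sE_{12}+tE_{23}$ has $(\log A)_{13}=st\,f[a_1,a_2,a_3]<0$, so $A\notin\IDM$. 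But $\psi(A)=\diag(a_1,\dots,a_n)+sE_{21}+tE_{23}$ has an essentially nonnegative logarithm, because $E_{21}E_{23}=E_{23}E_{21}=0$. So pair data yields only the local statement $\sigma(i,j)\in\{(p(i),p(j)),(p(j),p(i))\}$. Making the orientation global requires a three-index chain test, which is exactly Step~1.3 in the proof of Proposition~\ref{prop:IDM-intermediate-form}.

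Second, your way of pinning down $W$ does not work. Lemma~\ref{lem:SIDM_preserver_GL} has $\varphi(\SIDM)=\SIDM$ as its hypothesis, so it cannot deliver it. The boundary argument also fails, because $\SIDM$ is not contained in the interior of $\IDM$: $I\in\SIDM$, but $I-\varepsilon E_{12}\notin\mathcal N$ for every $\varepsilon>0$. The proof of Lemma~\ref{lem:SIDM_open} gives a nonempty open subset of $\SIDM$, not a neighbourhood of $I$. Nor does a relative version help: lowering the $(1,3)$ entry of $e^{E_{12}+E_{23}}\in\SIDM$ keeps the zero pattern but destroys infinite divisibility.

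Invertibility preservation becomes available once you know $W$ has rank one, since then, after normalization, $\varphi(A)=D_1AD_2$. Only at that point is your appeal to Theorem~A legitimate. For $n\ge3$, however, rank one needs more than the $2\times2$ relation $w_{ij}w_{ji}=w_{ii}w_{jj}$. The paper compares the thresholds $z\ge-\frac{f[a,b,c]}{f[a,c]}\,xy$ for the triangular block of Remark~\ref{rem:3x3-triangular-IDM} and for its image. This yields the cocycle relation $\lambda_{ij}\lambda_{jk}=d_j\lambda_{ik}$, hence $\lambda_{ij}=\alpha_i\beta_j$. It then makes $D_1D_2$ scalar using $e^{t(E_{12}+E_{23})}C$. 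All of these test matrices are invertible, so $\IDM$-membership is decided by the logarithm criterion without any general invertibility-preservation argument.
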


\section{Strongly Infinitely Divisible Matrices: Proof of Theorem A}

We begin our study of linear preservers with the class $\SIDM$. Our
goal in this section is to characterize all bijective linear maps on
$M_n(\mathbb R)$ that preserve $\SIDM.$ The proof
combines the topological structure of $\SIDM$ with classical results
on linear preservers of invertibility. In particular, we use the
inverse function theorem to establish a local property of $\SIDM$ and
then employ a Zariski-density argument to connect preservation of
$\SIDM$ with the preservation of invertibility. Together, these results
yield a complete characterization of the linear preservers of
$\SIDM$.

We first recall a useful observation concerning the behavior of bijective linear maps under closure.

\begin{lem}\cite{FallatMondal2026}
{\rm Let $L : M_n(\mathbb{R}) \to M_n(\mathbb{R})$ be a bijective linear map.  
Then the following are equivalent:
$
L(S) = S 
 \Longleftrightarrow 
L(\overline{S}) = \overline{S},
$ where $\overline{S}$ denotes the closure of the set $S$ in the Euclidean topology on $M_n(\mathbb{R})$.}
\end{lem}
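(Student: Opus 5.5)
The lemma states that for a bijective linear map $L$, $L(S)=S$ if and only if $L(\overline{S})=\overline{S}$. Read literally, the reverse implication is false: take $S$ to be the rational points of $M_n(\mathbb{R})$, so $\overline{S}=M_n(\mathbb{R})$, and $L$ multiplication by $\sqrt2$; then $L(\overline S)=\overline S$ but $L(S)\neq S$. So I would prove the forward implication in full, and use the reverse direction only where it is really needed, in the weaker form ``$L(\overline S)=\overline S$ implies $L$ maps $\overline S$ into itself''.

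The forward implication rests on one fact. Since $M_n(\mathbb{R})$ is finite-dimensional, every linear map on it is continuous. Hence a bijective linear $L$ is a homeomorphism of $M_n(\mathbb{R})$, with continuous linear inverse $L^{-1}$. A homeomorphism commutes with closure: $L(\overline{S})=\overline{L(S)}$ for every $S\subseteq M_n(\mathbb{R})$. I would check this by two inclusions.
\begin{itemize}
\item By continuity, $L(\overline S)\subseteq\overline{L(S)}$, since limits of sequences in $S$ map to limits of sequences in $L(S)$.
\item Applying the same argument to $L^{-1}$ and the set $L(S)$ gives $L^{-1}(\overline{L(S)})\subseteq\overline{S}$, that is, $\overline{L(S)}\subseteq L(\overline S)$.
\end{itemize}
Then $L(S)=S$ gives $L(\overline S)=\overline{L(S)}=\overline S$.

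For the way the lemma is used later, namely to pass from $\varphi(\SIDM_n)=\SIDM_n$ to statements about the closure, I would record that the same argument applies to one-sided inclusions. If $L(S)\subseteq S$, then $L(\overline S)=\overline{L(S)}\subseteq\overline S$.

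There is no real obstacle here. The only points that need care are noting that finite dimensionality gives automatic continuity of $L$ and $L^{-1}$, and restricting the claimed equivalence to the forward direction as explained above.
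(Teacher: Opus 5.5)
The paper gives no proof of this lemma. It is quoted from \cite{FallatMondal2026}, and it is never invoked anywhere later in the paper, so your guess that it is used to pass from $\varphi(\SIDM_n)=\SIDM_n$ to statements about closures does not match anything in the text. There is therefore no argument of the authors' to compare with.

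Your proof of the forward implication is the standard one and is correct. A bijective linear map on the finite-dimensional space $M_n(\mathbb R)$ is a homeomorphism, so $L(\overline S)=\overline{L(S)}$, and $L(S)=S$ then gives $L(\overline S)=\overline S$. Your one-sided version, $L(S)\subseteq S\Rightarrow L(\overline S)\subseteq\overline S$, also follows as you say, and it needs only continuity of $L$.

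Your counterexample to the converse is valid. Take $S=M_n(\mathbb Q)$ and $L(A)=\sqrt2\,A$. Then $L(\overline S)=\overline S=M_n(\mathbb R)$, but $\sqrt2\,I\in L(S)\setminus S$. So the claimed equivalence is false as stated, and you are right not to try to prove it.

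Two small remarks.
\begin{enumerate}
\item The ``weaker form'' of the converse you propose to record, that $L(\overline S)=\overline S$ implies $L$ maps $\overline S$ into itself, is a tautology. It says nothing about $S$ and is not a substitute for anything.
\item A genuine converse does hold under extra hypotheses on $S$. It holds when $S$ is closed, or when $S=\operatorname{int}\overline S$, because a homeomorphism commutes with interiors as well as closures. Without some such hypothesis it fails, as your example shows.
\end{enumerate}
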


The proof of Theorem~\hyperref[thm:mainA]{A}  requires several preliminary lemmas, beginning with an application of the inverse function theorem.

\subsection*{An Application of the Inverse Function Theorem}

We begin by recalling a general statement of the inverse function theorem on a real vector space; see, e.g., \cite[Chapter 9]{Rudin1976}

\begin{thm}[Inverse Function Theorem]\label{thm:IFT}
{\rm Let $f:\mathbb R^m\to\mathbb R^m$ be a continuously differentiable map, and let
$x_0\in\mathbb R^m$. If the derivative $Df(x_0)$ is invertible, then there exist
open neighborhoods $U$ of $x_0$ and $V$ of $f(x_0)$ such that
$f:U\to V$
is a bijection. Moreover, the inverse map $f^{-1}:V\to U$ is continuously
differentiable. In particular, $f(U)=V$ is open and $f$ is a local diffeomorphism
at $x_0$.}
\end{thm}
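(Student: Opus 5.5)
We would prove Theorem~\ref{thm:IFT} by the classical contraction-mapping argument, as in \cite[Chapter 9]{Rudin1976}. First we normalize: set $T=Df(x_0)$, which is invertible by hypothesis, and replace $f$ by $T^{-1}\circ f$. Since composing with an invertible linear map preserves bijectivity, openness of images and $C^1$-smoothness of inverses, we may assume $Df(x_0)=I$. By continuity of $Df$, we then choose an open ball $U$ centred at $x_0$ on which $\|Df(x)-I\|\le \tfrac12$ in operator norm. Shrinking $U$ slightly, we may also assume $Df(x)$ is invertible for every $x\in U$; this follows from the Neumann series, since $\|I-Df(x)\|<1$.

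Next we obtain injectivity together with a quantitative lower bound. For each $y\in\mathbb R^m$, define $\phi_y(x)=x+\bigl(y-f(x)\bigr)$. Its derivative is $I-Df(x)$, which has norm at most $\tfrac12$ on $U$. By the mean value inequality, $\phi_y$ is a contraction with constant $\tfrac12$ on the convex set $U$. Fixed points of $\phi_y$ are exactly the solutions of $f(x)=y$, so each $y$ has at most one preimage in $U$; hence $f|_U$ is injective. The same estimate gives
\[
\|f(x_1)-f(x_2)\|\ \ge\ \tfrac12\|x_1-x_2\| \qquad (x_1,x_2\in U),
\]
which will be used again below.

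The step we expect to be the main obstacle is showing that $V=f(U)$ is open, since this requires solving $f(x)=y$ for every nearby $y$ rather than only proving uniqueness. Fix $y_1=f(x_1)$ with $x_1\in U$, and pick a closed ball $\overline B(x_1,r)\subset U$. For $\|y-y_1\|<r/2$, we claim $\phi_y$ maps $\overline B(x_1,r)$ into itself: indeed
\[
\|\phi_y(x)-x_1\|\le\|\phi_y(x)-\phi_y(x_1)\|+\|\phi_y(x_1)-x_1\|\le \tfrac r2+\|y-y_1\|<r .
\]
The Banach fixed point theorem on this complete ball then produces a solution of $f(x)=y$, so $B(y_1,r/2)\subset V$ and $V$ is open.

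Finally, we treat the regularity of $g=(f|_U)^{-1}:V\to U$. The lower bound above shows that $g$ is Lipschitz with constant $2$. Write $y=f(x)$ and $y+k=f(x+h)$. Differentiability of $f$ gives $k=Df(x)h+o(\|h\|)$. Applying $Df(x)^{-1}$ and using $\|h\|\le 2\|k\|$, we get $g(y+k)-g(y)=Df(x)^{-1}k+o(\|k\|)$. Hence $Dg(y)=Df(g(y))^{-1}$. This is a composition of continuous maps, since matrix inversion is continuous on invertible matrices, so $g$ is $C^1$ and $f$ is a local diffeomorphism at $x_0$.
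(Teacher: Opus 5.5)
Your proof is correct and is essentially the standard contraction-mapping proof from \cite[Chapter~9]{Rudin1976}, which is the source the paper cites for this theorem; the paper gives no proof of its own. The only cosmetic difference is that you normalize to $Df(x_0)=I$ at the outset, whereas Rudin keeps $A=Df(x_0)$ and builds $A^{-1}$ into the auxiliary map $x\mapsto x+A^{-1}(y-f(x))$.
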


\medskip

Since $M_n(\mathbb R)$ is a finite-dimensional real vector space of dimension
$n^2$, it is naturally identified with $\mathbb R^{n^2}$, and hence
Theorem~\ref{thm:IFT} applies to smooth maps on $M_n(\mathbb R)$.

Before we come to our first lemma, we now show that \SIDM\ has nonempty interior. From Theorem \ref{SIDM} we have the characterization
\[
\SIDM=\{e^{-Q}: Q \text{ is a } Z\text{-matrix}\},
\]
where a $Z$-matrix is a real matrix whose off-diagonal entries are non-positive (that is $-Q$ is essentially nonnegative).
Define the set of strict $Z$-matrices
\[
\mathcal Z^\circ=\{Q\in M_n(\mathbb R): q_{ij}<0 \text{ for all } i\neq j\}.
\]
The set $\mathcal Z^\circ$ is open in $M_n(\mathbb R)$, since it is described by
finitely many strict inequalities.

\medskip
\begin{lem}\label{lem:SIDM_open}
{\rm Endowed with the usual Euclidean  topology, the set $\SIDM \subset M_n(\mathbb R)$
contains a nonempty open set. In particular, \SIDM \ contains a
neighborhood of the identity matrix $I$.}
\end{lem}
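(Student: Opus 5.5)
The plan is to apply Theorem~\ref{thm:IFT} to the exponential map $f(X)=e^{-X}$ on $M_n(\mathbb R)\cong\mathbb R^{n^2}$, and then use Theorem~\ref{SIDM} to identify part of its image with \SIDM. The map $f$ is smooth, being a convergent power series. Its derivative at $X=0$ is $Df(0)H=-H$, which is invertible. Hence Theorem~\ref{thm:IFT} gives open neighborhoods $U$ of $0$ and $V$ of $I$ such that $f:U\to V$ is a diffeomorphism.

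The key difficulty is that $0$ is not itself in the open set $\mathcal Z^\circ$. So $f(U\cap\mathcal Z^\circ)$ is open but does not contain $I$, and $f(U)=V$ is not automatically contained in \SIDM. I would handle the two claims separately.

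\textbf{Nonempty open set.} Pick $Q_0\in\mathcal Z^\circ$ with $\|Q_0\|$ small, for instance $Q_0=-\varepsilon(J-I)$ with $J$ the all-ones matrix. Then $U\cap\mathcal Z^\circ$ is a nonempty open set. Its image $W:=f(U\cap\mathcal Z^\circ)$ is open, because $f|_U$ is a homeomorphism onto the open set $V$. By Theorem~\ref{SIDM}, every $e^{-Q}$ with $Q$ a $Z$-matrix lies in \SIDM, so $W\subseteq\SIDM$. (Alternatively, one can apply Theorem~\ref{thm:IFT} at $Q_0$ itself, since $D\exp$ is invertible near $0$.) This already proves the first claim.

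\textbf{Neighborhood of $I$.} For the ``in particular'' claim, the local-diffeomorphism argument does not suffice. A matrix $A$ near $I$ with a negative off-diagonal entry has principal logarithm $\log A$ close to $A-I$, which then has a negative off-diagonal entry. By uniqueness of the logarithm near $0$, $A$ cannot be $e^{B}$ with $B$ essentially nonnegative and small. In fact such an $A$ is not even nonnegative, so it is not in \SIDM\ at all. Thus \SIDM\ cannot contain a full Euclidean neighborhood of $I$.

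What can be rescued, and what the later Zariski-density argument actually needs, is that $I$ lies in the closure of the open set $W\subseteq\SIDM$. This follows by letting $\varepsilon\to0$ in $Q_0$: $e^{\varepsilon(J-I)}\to I$. Equivalently, \SIDM\ contains open sets arbitrarily close to $I$, that is, an open set inside every neighborhood of $I$.

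I expect this second claim to be the main obstacle. As literally stated it appears false, because elements of \SIDM\ are entrywise nonnegative while every neighborhood of $I$ contains matrices with a negative entry. My proof would therefore establish the nonempty-open-set statement rigorously, together with the closure statement $I\in\overline{\operatorname{int}\SIDM}$, and flag the neighborhood phrasing as needing this correction.
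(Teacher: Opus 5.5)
Your argument for the nonempty open set is essentially the paper's. You apply the Inverse Function Theorem to the exponential at $0$, intersect the resulting ball with the strict $Z$-matrices $\mathcal Z^\circ$, and use the easy direction of Theorem~\ref{SIDM} to place $\exp(-(\mathcal Z^\circ\cap B(0,\delta)))$ inside \SIDM.

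You are also right that the ``in particular'' claim is false for $n\ge2$: for example, $I-\varepsilon E_{12}$ is not nonnegative, so it is not in \SIDM. The paper's proof breaks exactly where it asserts that $U=\mathcal Z^\circ\cap B(0,\delta)$ is an open neighborhood of $0$. Since $0\notin\mathcal Z^\circ$ when $n\ge2$, the set $\exp(-U)$ is open and nonempty but does not contain $I$.

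Two small corrections to your write-up:
\begin{itemize}
\item Your refutation needs $n\ge2$. For $n=1$ the off-diagonal condition is vacuous, so $\mathcal Z^\circ=M_1(\mathbb R)$, and $\SIDM_1=(0,\infty)$ genuinely is a neighborhood of $1$.
\item Lemma~\ref{lem:SIDM_zariski_dense} uses only that \SIDM\ has nonempty Euclidean interior. Your closure statement $I\in\overline{W}$ is true, but it is not what the Zariski-density argument needs.
\end{itemize}
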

\begin{proof}
    Consider the matrix exponential map
\[
\exp : M_n(\mathbb R)\to M_n(\mathbb R), \qquad
\exp(X)=\sum_{k=0}^{\infty}\frac{X^k}{k!}.
\]
This map is smooth (indeed, real analytic) on $M_n(\mathbb R)$. Its derivative at
the zero matrix is given by
$D(\exp)_0(H)=H \qquad \text{for all } H\in M_n(\mathbb R), $ 
that is, $D(\exp)_0$ is the identity map on $M_n(\mathbb R)$ and hence is
invertible. By the Inverse Function Theorem, there exists $\delta>0$
such that $\exp$ restricts to a diffeomorphism from the open ball
\[
B(0,\delta)=\{X\in M_n(\mathbb R): \|X\|<\delta\}
\]
onto an open neighborhood of the identity matrix $I=\exp(0)$.

As noted above, the set $\mathcal Z^\circ$ is open in $M_n(\mathbb R)$. Let
$U:=\mathcal Z^\circ\cap B(0,\delta), $ 
where $\delta>0$ is chosen as above. Then $U$ is an open neighborhood of $0$
consisting entirely of $Z$-matrices. Since multiplication by $-1$ is a
homeomorphism of $M_n(\mathbb R)$, the map $Q\mapsto \exp(-Q)$ is also a local
diffeomorphism at $0$. Consequently,
$\exp(-U)$ 
is an open neighborhood of $I$ in $M_n(\mathbb R)$.
Finally, for every $Q\in U$, we have $Q$ a $Z$-matrix, and hence
$\exp(-Q)\in\SIDM$ by definition. Thus $\exp(-U)\subseteq \SIDM$, showing that
$I$ is an interior point of \SIDM\ and that \SIDM \ has nonempty interior in
the usual topology on $M_n(\mathbb R)$.
\end{proof}

% \begin{proof}
% We use the standard characterization of strongly infinitely divisible matrices:
% \[
% \SIDM_n=\{\exp(-Q): Q \text{ is a } Z\text{-matrix}\},
% \]
% where a $Z$-matrix is a real matrix whose off-diagonal entries are nonpositive.

% Define the set of {strict} $Z$-matrices
% \[
% \mathcal Z^\circ := \{Q\in M_n(\mathbb R): q_{ij}<0 \text{ for all } i\neq j\}.
% \]
% Since $\mathcal Z^\circ$ is defined by finitely many strict inequalities on the
% coordinates of $M_n(\mathbb R)\cong\mathbb R^{n^2}$, it is an open subset of
% $M_n(\mathbb R)$ in the usual topology.

% Consider the matrix exponential map
% \[
% \exp : M_n(\mathbb R)\to M_n(\mathbb R).
% \]
% This map is smooth, and its derivative at the zero matrix is the identity:
% \[
% D(\exp)_0(H)=H \qquad \text{for all } H\in M_n(\mathbb R).
% \]
% By the inverse function theorem, there exists $\delta>0$ such that $\exp$ restricts
% to a diffeomorphism from the open ball
% \[
% B(0,\delta):=\{Q\in M_n(\mathbb R): \|Q\|<\delta\}
% \]
% onto an open neighborhood of the identity matrix $I$.

% Now define
% \[
% U := \mathcal Z^\circ \cap B(0,\delta).
% \]
% Then $U$ is an open subset of $M_n(\mathbb R)$, and for every $Q\in U$,
% $Q$ is a $Z$-matrix. Hence
% \[
% \exp(-Q)\in \SIDM_n.
% \]
% Therefore the set
% \[
% \Omega := \{\exp(-Q): Q\in U\}
% \]
% is an open subset of $M_n(\mathbb R)$ (as the image of an open set under a local
% diffeomorphism), satisfies $\Omega\subseteq \SIDM_n$, and contains $I=\exp(0)$.

% This proves that \SIDM has nonempty interior in $M_n(\mathbb R)$.
% \end{proof}

\begin{rem}
{\rm The interior points of \SIDM\ obtained above are precisely those matrices
$B=e^{-Q}$ for which the generator $Q$ has strictly negative off-diagonal entries
and a sufficiently small norm.}
\end{rem}

%\begin{rem}
%The same argument works for any equivalent matrix norm on $M_n(\mathbb R)$,
%since all norms induce the same topology on finite-dimensional spaces.
%\end{rem}

% {\color{blue}
% We recall the following result, which will be used in the proof of the
% main lemma concerning bijective linear maps on \(\SIDM\).

% Before stating the result, we briefly recall the Zariski topology on
% \(M_n(\mathbb{R})\). A subset \(V\subseteq M_n(\mathbb{R})\) is said to be
% \textit{Zariski closed} if there exist polynomials
% \[
% f_1,\ldots,f_k\in\mathbb{R}[x_{11},\ldots,x_{nn}]
% \]
% such that
% \[
% V=\left\{A\in M_n(\mathbb{R}) :
% f_1(A)=\cdots=f_k(A)=0\right\}.
% \]
% A subset of \(M_n(\mathbb{R})\) is called \textit{Zariski open} if its
% complement is Zariski closed. The topology determined by these closed
% sets is called the \textit{Zariski topology} on \(M_n(\mathbb{R})\).

% For any subset \(S\subseteq M_n(\mathbb{R})\), its \textit{Zariski closure},
% denoted by \(\overline{S}^{\,Z}\), is the smallest Zariski-closed set
% containing \(S\). Equivalently,
% \[
% \overline{S}^{\,Z}
% =
% \bigcap_{\substack{V\text{ Zariski closed}\\ S\subseteq V}} V.
% \]
% If \(X\subseteq M_n(\mathbb{R})\) is equipped with the induced Zariski
% topology, then a subset \(S\subseteq X\) is said to be \textit{Zariski dense}
% in \(X\) if its closure in \(X\) is equal to \(X\).

% With this terminology, we have the following result.
% }

We recall the following notions and results, which are used in the proof
of the main lemma concerning bijective linear maps on \(\SIDM\). We consider the Zariski topology on \(\mnr\), which is particularly
well suited to the study of sets defined by polynomial equations. For a
detailed treatment, see \cite[Section~3.5]{reid1988}.
%\subsection*{Zariski Topology} 
A subset \(V\subseteq\mnr\) is called \textit{Zariski closed} if there
exist polynomials
$f_1,\ldots,f_k\in\mathbb{R}[x_{11},\ldots,x_{nn}]$ 
such that
$V=\{A\in\mnr:f_1(A)=\cdots=f_k(A)=0\}.$ 
The complement of a Zariski-closed set is called \textit{Zariski open}.
For a subset \(S\subseteq\mnr\), its \textit{Zariski closure}, denoted by
\(\overline{S}^{\,Z}\), is the smallest Zariski-closed set containing
\(S\). A subset \(S\) of a Zariski topological space \(X\) is said to be
\textit{Zariski dense} in \(X\) if its Zariski closure is \(X\).

We shall use the standard fact that a nonzero real polynomial cannot
vanish on a nonempty Euclidean-open subset of \(\mathbb{R}^m\). Indeed,
if a polynomial vanishes on a nonempty Euclidean-open set, then, by the
identity theorem for real-analytic functions, it must be the zero
polynomial. Consequently, if
\(V\subsetneq\mathrm{GL}_n(\mathbb{R})\) is a proper Zariski-closed
subset, then
$V=\mathrm{GL}_n(\mathbb{R})\cap Z(p_1,\ldots,p_k),$ 
for some polynomials \(p_1,\ldots,p_k\), at least one of which is nonzero.
Hence \(V\) cannot contain a nonempty relatively Euclidean-open subset of
\(\mathrm{GL}_n(\mathbb{R})\). Therefore, every nonempty relatively
Euclidean-open subset of \(\mathrm{GL}_n(\mathbb{R})\) is Zariski dense
in \(\mathrm{GL}_n(\mathbb{R})\).

We also recall the following elementary property of linear maps with
respect to the Zariski topology.

\begin{lem}\cite{FallatMondal2026}\label{lem:zariski_linear}
{\rm Let $\phi : M_n(\mathbb R) \to M_n(\mathbb R)$ be a linear map. Then $\phi$
is continuous with respect to the Zariski topology. In particular, if $\phi$
is bijective, then $\phi$ is a Zariski homeomorphism.}
\end{lem}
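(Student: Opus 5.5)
The statement to prove is Lemma~\ref{lem:zariski_linear}: a linear map $\phi$ on $M_n(\mathbb R)$ is Zariski continuous, and a Zariski homeomorphism when it is bijective. The plan is to show directly that preimages of Zariski-closed sets are Zariski closed, by pulling back the defining polynomials.

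First, continuity. Identify $M_n(\mathbb R)$ with $\mathbb R^{n^2}$ via the coordinates $x_{ij}$. Since $\phi$ is linear, each coordinate function of $\phi(A)$ has the form $\phi(A)_{kl}=\sum_{i,j}c^{kl}_{ij}a_{ij}$ for fixed real scalars $c^{kl}_{ij}$. Thus each coordinate of $\phi$ is a homogeneous polynomial of degree one in the entries of $A$.

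Now let $V=\{B: f_1(B)=\cdots=f_k(B)=0\}$ be Zariski closed. Then
\[
\phi^{-1}(V)=\{A: (f_1\circ\phi)(A)=\cdots=(f_k\circ\phi)(A)=0\}.
\]
Each $f_r\circ\phi$ is obtained by substituting linear forms into a polynomial, so it again lies in $\mathbb R[x_{11},\ldots,x_{nn}]$. Hence $\phi^{-1}(V)$ is Zariski closed, and $\phi$ is Zariski continuous.

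Second, the homeomorphism claim. If $\phi$ is bijective, then $\phi^{-1}$ is also a linear map on $M_n(\mathbb R)$, because the inverse of a bijective linear map is linear. Applying the first part to $\phi^{-1}$ shows that $\phi^{-1}$ is Zariski continuous as well. Therefore $\phi$ is a Zariski homeomorphism; equivalently, $\phi$ maps Zariski-closed sets to Zariski-closed sets.

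There is no substantial obstacle here. The only point to state carefully is that composing a polynomial with linear forms yields a polynomial, which is immediate once the coordinate expression of $\phi$ is written out.
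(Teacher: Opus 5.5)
Your argument is correct. You pull back the defining polynomials along the coordinate functions of $\phi$; these are linear forms, possibly zero when $\phi$ is not injective, so ``degree one'' should read ``degree at most one''. This shows that $\phi^{-1}(V)$ is Zariski closed, and applying the same reasoning to the linear map $\phi^{-1}$ gives the homeomorphism. The paper gives no proof of this lemma and simply cites it from Fallat and Mondal, so there is nothing to compare against; your pull-back argument is the standard one for this fact.
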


%\begin{lem}\label{lem:SIDM_zariski_dense}
%The Zariski closure of \SIDM\ in $M_n(\mathbb R)$ is equal to
%$GL_n(\mathbb R)$, that is,
%\[
%\overline{\SIDM}^{\,Z} = GL_n(\mathbb R).
%\]
%\end{lem}

\begin{lem}\label{lem:SIDM_zariski_dense}
{\rm With respect to the subspace Zariski topology on \(GL_n(\mathbb R)\)
induced by \(M_n(\mathbb R)\), the set \(\SIDM\) is Zariski dense; that is,
$
\overline{\SIDM}^{\,Z}=GL_n(\mathbb R).
$}
\end{lem}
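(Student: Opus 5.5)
The plan is to combine Lemma~\ref{lem:SIDM_open} with the standard fact recorded just before Lemma~\ref{lem:zariski_linear}: a nonempty relatively Euclidean-open subset of $\mathrm{GL}_n(\mathbb R)$ is Zariski dense in $\mathrm{GL}_n(\mathbb R)$.

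First, I will note that $\SIDM\subseteq \mathrm{GL}_n(\mathbb R)$. This holds by definition, since strongly infinitely divisible matrices are invertible; alternatively, Theorem~\ref{SIDM} gives $\det(e^B)=e^{\operatorname{tr}B}>0$. Hence the relative Zariski closure of $\SIDM$ in $\mathrm{GL}_n(\mathbb R)$ is contained in $\mathrm{GL}_n(\mathbb R)$.

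For the reverse inclusion, I invoke Lemma~\ref{lem:SIDM_open}. It gives an open set $W=\exp(-U)$ with $I\in W\subseteq \SIDM$, where $W$ is open in $M_n(\mathbb R)$. Since $W\subseteq \mathrm{GL}_n(\mathbb R)$, the set $W=W\cap \mathrm{GL}_n(\mathbb R)$ is a nonempty relatively Euclidean-open subset of $\mathrm{GL}_n(\mathbb R)$.

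Now suppose $V$ is a relatively Zariski-closed subset of $\mathrm{GL}_n(\mathbb R)$ containing $\SIDM$. Write $V=\mathrm{GL}_n(\mathbb R)\cap Z(p_1,\dots,p_k)$. Each $p_i$ vanishes on the nonempty Euclidean-open set $W\subseteq M_n(\mathbb R)$. A real polynomial vanishing on a nonempty open subset of $\mathbb R^{n^2}$ is the zero polynomial, so every $p_i=0$. Therefore $V=\mathrm{GL}_n(\mathbb R)$.

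Since every relatively Zariski-closed set containing $\SIDM$ equals $\mathrm{GL}_n(\mathbb R)$, the Zariski closure of $\SIDM$ in $\mathrm{GL}_n(\mathbb R)$ is all of $\mathrm{GL}_n(\mathbb R)$.

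There is no serious obstacle here; the substantive input is the nonempty interior supplied by Lemma~\ref{lem:SIDM_open}. The one point needing care is interpreting $\overline{\SIDM}^{\,Z}$ as the closure relative to $\mathrm{GL}_n(\mathbb R)$, as the statement specifies. In the ambient space $M_n(\mathbb R)$, the same argument shows that the Zariski closure of $\SIDM$ is all of $M_n(\mathbb R)$, because $\mathrm{GL}_n(\mathbb R)$ is itself Zariski dense in $M_n(\mathbb R)$. So the equality must be read in the subspace topology.
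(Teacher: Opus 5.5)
Your argument is correct and is essentially the paper's proof: a nonempty Euclidean-open subset of $\SIDM$ (Lemma~\ref{lem:SIDM_open}) forces every polynomial defining a relatively Zariski-closed set containing $\SIDM$ to vanish identically, and your remark that the equality must be read relative to $GL_n(\mathbb R)$ is right. One small slip: for $n\ge2$ you should not assert $I\in W$, since $0\notin\mathcal Z^\circ$ and $I$ is not even an interior point of $\SIDM$ (the matrix $I-\varepsilon E_{12}$ has a negative entry). This does no harm, because you only use that $W$ is nonempty, and that holds since $\mathcal Z^\circ\cap B(0,\delta)$ contains the matrix with zero diagonal and all off-diagonal entries equal to $-\varepsilon$ for small $\varepsilon>0$.
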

\begin{proof}
By Lemma~\ref{lem:SIDM_open}, the set $\SIDM \subset M_n(\mathbb R)$ contains
a nonempty open subset with respect to the Euclidean topology. Let
$U \subset \SIDM$ be such an open set. In particular, $U \subset
GL_n(\mathbb R)$.

We claim $\overline{U}^{\,Z} = GL_n(\mathbb R)$. Suppose, for the sake of
contradiction, that there exists a proper Zariski-closed subset
$V \subsetneq GL_n(\mathbb R)$ such that $U \subseteq V$. Then there exists a
nonzero polynomial $p$ on $M_n(\mathbb R)$ that vanishes on $V$, and hence on
$U$. Since $U$ is Euclidean-open in $M_n(\mathbb R)$, the polynomial $p$ must vanish
on a nonempty open set. By analyticity of polynomials, this implies that $p$
vanishes identically on $M_n(\mathbb R)$, contradicting the fact that $p$ is
nonzero. Therefore, no proper Zariski-closed subset of $GL_n(\mathbb R)$ contains $U$.
Consequently,
$\overline{U}^{\,Z} = GL_n(\mathbb R).$ 

Since $U \subset \SIDM$, it follows that
\[
GL_n(\mathbb R)
= \overline{U}^{\,Z}
\subseteq \overline{\SIDM}^{\,Z}
\subseteq GL_n(\mathbb R),
\]
and hence $\overline{\SIDM}^{\,Z} = GL_n(\mathbb R),$ 
which completes the proof.
\end{proof}

Our final lemma is the key to characterizing bijective linear maps on \SIDM\ because it connects such maps to linear preservers of invertibility.

\begin{lem}\label{lem:SIDM_preserver_GL}
{\rm Let $\phi : M_n(\mathbb R) \to M_n(\mathbb R)$ be a bijective linear map such that
$\phi(\SIDM)=\SIDM.$ 
Then
$\phi\bigl(GL_n(\mathbb R)\bigr)=GL_n(\mathbb R).$ }
\end{lem}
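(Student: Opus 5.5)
The plan is to transfer Zariski closures through $\phi$ using Lemma~\ref{lem:zariski_linear}. Since $\phi$ is a bijective linear map, it is a Zariski homeomorphism of $M_n(\mathbb R)$. A homeomorphism carries closures to closures, so
\[
\phi\bigl(\overline{\SIDM}^{\,Z}\bigr)=\overline{\phi(\SIDM)}^{\,Z}=\overline{\SIDM}^{\,Z}.
\]
Here the closures are taken in $M_n(\mathbb R)$. The issue is then to identify this closure with something that determines $GL_n(\mathbb R)$.

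We should be careful about which closure to use. Lemma~\ref{lem:SIDM_zariski_dense} computes the closure relative to $GL_n(\mathbb R)$. In $M_n(\mathbb R)$ itself, however, the Zariski closure of any nonempty Euclidean-open set is all of $M_n(\mathbb R)$, because a polynomial vanishing on an open set is zero. So the ambient closure carries no information. Density alone is therefore not enough, and I would supplement it with a concrete algebraic invariant: the determinant.

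The key step is the following. Consider the polynomial $q(A):=\det(\phi(A))$. For $A\in\SIDM$ we have $\phi(A)\in\SIDM\subseteq GL_n(\mathbb R)$, so $q$ has no zeros on $\SIDM$. In fact, Theorem~\ref{SIDM} gives $\det\phi(A)=e^{\operatorname{tr}B}>0$ there. Applying the same to $\phi^{-1}$, the polynomial $\det\circ\phi^{-1}$ is positive on $\SIDM$.

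I would then show that $q$ is a nonzero constant multiple of $\det$. The plan is to show that the zero set $Z(q)$ equals $Z(\det)$. Since $\det$ is irreducible, $q$, which has degree at most $n$, would then be a constant multiple of $\det$. This is the standard Dieudonné-type route to invertibility preservers. Equality of zero sets is exactly the content of the lemma, so one cannot simply assume it. This is the main obstacle, and I would attack it as follows.

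Take $A$ singular and an open set $U\subseteq\SIDM$ around $I$ given by Lemma~\ref{lem:SIDM_open}. Look at the line $t\mapsto I+t(A-I)$, or more usefully the pencil $A-\lambda I$. The polynomial $p(\lambda)=\det(\phi(A-\lambda I))$ is a polynomial in $\lambda$ of degree at most $n$. I would like to show that its roots coincide with the eigenvalues of $A$.

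To do this, I would use that $\SIDM$ is closed under positive scalings and under $X\mapsto e^{tB}$ semigroups. Together with the invariance $\phi(\SIDM)=\SIDM$, this gives many matrices in $\SIDM$ on which $\det\circ\phi$ is positive. Real analyticity of $\det\circ\phi$ composed with $\exp$ on the open set of strict $Z$-matrices would then be used to compare $\det\phi(e^{-Q})$ with $\det e^{-Q}$.

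If this comparison proves too delicate, I would fall back on the closure lemma from \cite{FallatMondal2026}. Then $\phi(\overline{\SIDM})=\overline{\SIDM}$ in the Euclidean topology. The boundary points of $\SIDM$ that are singular should be detectable intrinsically, for example as limits $e^{-Q_k}$ with $\operatorname{tr}Q_k\to\infty$. This would show that $\phi$ maps singular matrices in $\overline{\SIDM}$ to singular ones. Zariski density of those singular boundary points in the determinantal hypersurface would then give $\phi(Z(\det))\subseteq Z(\det)$. Bijectivity and the symmetric argument for $\phi^{-1}$ yield $\phi(GL_n(\mathbb R))=GL_n(\mathbb R)$.
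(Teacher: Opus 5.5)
Your objection to the closure-transfer step is correct, and it applies verbatim to the paper's own proof, which consists of exactly that step. The paper identifies $\overline{\SIDM}^{\,Z}$ with $GL_n(\mathbb R)$, but that is the closure \emph{relative to} $GL_n(\mathbb R)$. A Zariski homeomorphism of $M_n(\mathbb R)$ transports ambient closures, and the ambient closure of \SIDM\ is $M_n(\mathbb R)$. Transporting the relative closure would need $\phi(GL_n(\mathbb R))=GL_n(\mathbb R)$ beforehand. The reasoning pattern itself is invalid. Take the Frobenius-orthogonal map of $M_2(\mathbb R)$ that fixes $I$ and $E_{12}+E_{21}$ and swaps $\operatorname{diag}(1,-1)$ with $E_{12}-E_{21}$. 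It preserves every small ball about $I$, yet it sends $\operatorname{diag}(2,0)$ to the invertible matrix $I+E_{12}-E_{21}$. So genuinely \SIDM-specific input is required, and the gap in your proposal is that it never supplies it.

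The step you flag, $Z(\det\circ\phi)=Z(\det)$, is the whole lemma, and none of your sketches reaches it.
\begin{itemize}
\item Knowing $\det\circ\phi>0$ on \SIDM\ does not locate the zeros of $\det\circ\phi$.
\item The pencil idea amounts to showing that $\phi(I)^{-1}\phi(A)$ and $A$ are isospectral, which is stronger than the lemma.
\item On the strict $Z$-region you have only an inequality, so there is nothing to continue analytically.
\end{itemize}

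The fallback fails too. Describing the singular points of $\overline{\SIDM}$ through generators with $\operatorname{tr}Q_k\to\infty$ is not $\phi$-invariant. This part can be repaired: \SIDM\ is relatively closed in $GL_n(\mathbb R)$. To see this, bound an essentially nonnegative $B$ whose exponential is near an invertible limit, using $(e^B)_{ii}\ge e^{b_{ii}}$, $\operatorname{tr}B=\log\det e^B$, and $(e^B)_{ij}\ge e^{-\alpha}b_{ij}$ with $\alpha=\max_i|b_{ii}|$. Hence $\overline{\SIDM}\cap Z(\det)=\overline{\SIDM}\setminus\SIDM$ is $\phi$-invariant.

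The density step, however, is false already for $n=3$. Take $L\in\overline{\SIDM}$ singular with distinct eigenvalues, and $e^{B_k}\to L$ with $B_k$ essentially nonnegative.
\begin{itemize}
\item $B_k$ commutes with the nonderogatory matrix $e^{B_k}$, so it is a real primary logarithm.
\item Reality of $B_k$ and Perron--Frobenius force the two nonzero eigenvalues of $L$ to be positive.
\item Hence $B_k=\log(\mu_1^{(k)})P_1^{(k)}+O(1)$ with $\mu_1^{(k)}\to0^+$ and $P_1^{(k)}\to uv^T/(v^Tu)$, where $Lu=0=v^TL$.
\item Essential nonnegativity then forces $u_iv_j/(v^Tu)\le0$ for all $i\ne j$.
\item For $n=3$ this is impossible unless $u$ or $v$ has a zero entry, i.e.\ $\operatorname{adj}L$ has a zero row or column.
\end{itemize}
Thus $\overline{\SIDM}\cap Z(\det)$ lies in the union of the discriminant locus and this cofactor locus. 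That union is a proper Zariski-closed subset of $Z(\det)$, so no density argument can yield $\phi(Z(\det))\subseteq Z(\det)$.

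A route that does work bypasses determinants. \SIDM\ is closed under positive scaling and contains the positive diagonal matrices and $e^{tE_{ij}}=I+tE_{ij}$. So its closed convex conic hull is $M_n(\mathbb R_+)$, and therefore $\phi(E_{ij})=\lambda_{ij}E_{\sigma(i,j)}$. Tests of the kind used in Section~3 then pin down $\phi$: positive diagonal matrices, the $2\times2$ criterion $\det>0$, and the negative second divided difference of $\log$. They force $\phi(A)=D_1PAP^TD_2$ or $\phi(A)=D_1PA^TP^TD_2$, which visibly preserves $GL_n(\mathbb R)$.
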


\begin{proof}
Since $GL_n(\mathbb R)$ is a Zariski-open subset of $M_n(\mathbb R)$, the
Zariski topology on $GL_n(\mathbb R)$ coincides with the subspace topology
induced from $M_n(\mathbb R)$. By Lemma~\ref{lem:SIDM_zariski_dense}, the set
\SIDM \ is Zariski dense in $GL_n(\mathbb R)$, that is,
$\overline{\SIDM}^{\,Z} = GL_n(\mathbb R).$
Using Lemma~\ref{lem:zariski_linear} implies that every linear map on $M_n(\mathbb R)$ is
Zariski-continuous, and since $\phi$ is bijective, it is a Zariski
homeomorphism. Hence $\phi$ preserves Zariski closures, and we obtain
\[
\phi\!\left(\overline{\SIDM}^{\,Z}\right)
=\overline{\phi(\SIDM)}^{\,Z}
=\overline{\SIDM}^{\,Z}.
\]
Since $\overline{\SIDM}^{\,Z} = GL_n(\mathbb R)$, we conclude that
$\phi\bigl(GL_n(\mathbb R)\bigr)=GL_n(\mathbb R),$ 
as claimed.
\end{proof}

A final core ingredient for the proof of our main result concerns a formula for the logarithm of a triangular matrix.

\begin{thm}\cite[Theorem~4.11]{hig} [Function of a Triangular Matrix]
{\rm Let \(T \in \mathbb{C}^{n \times n}\) be upper triangular and suppose that \(f\) is defined on the spectrum of \(T\). Then \(F=f(T)\) is upper triangular with \(f_{ii}=f(t_{ii})\) and
\[
f_{ij}
=
\sum_{(s_0,\ldots,s_k)\in S_{ij}}
t_{s_0,s_1} t_{s_1,s_2} \cdots t_{s_{k-1},s_k}
\, f[\lambda_{s_0},\ldots,\lambda_{s_k}],
\]
where \(\lambda_i=t_{ii}\), \(S_{ij}\) is the set of all strictly increasing sequences of integers that start at \(i\) and end at \(j\), and \(f[\lambda_{s_0},\ldots,\lambda_{s_k}]\) is the \(k\)th order divided difference of \(f\) at \(\lambda_{s_0},\ldots,\lambda_{s_k}\).}
\end{thm}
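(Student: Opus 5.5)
This is the cited result \cite[Theorem~4.11]{hig}, which we would take as given. For completeness, here is how we would establish it.

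Since $T$ is upper triangular, there is a polynomial $p$ interpolating $f$, together with the required derivatives, on the spectrum of $T$. Then $f(T)=p(T)$ by the definition of a primary matrix function. Powers of an upper triangular matrix are upper triangular, so $F$ is upper triangular, and its diagonal entries are $p(t_{ii})=f(t_{ii})$. The formula for $f_{ij}$ is linear in $f$ (through $p$) and is compatible with the divided-difference representation of $p$. It therefore suffices to verify it for monomials $f(x)=x^m$, or equivalently for every polynomial.

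For $f(x)=x^m$ we expand
\[
(T^m)_{ij}=\sum_{i=r_0\le r_1\le\cdots\le r_m=j} t_{r_0r_1}\cdots t_{r_{m-1}r_m}.
\]
We then group the walks according to the strictly increasing sequence $(s_0,\ldots,s_k)\in S_{ij}$ of distinct indices they visit. A walk with underlying sequence $s$ contributes $t_{s_0s_1}\cdots t_{s_{k-1}s_k}$ times a product of diagonal entries, namely $\lambda_{s_0}^{a_0}\cdots\lambda_{s_k}^{a_k}$ with $a_0+\cdots+a_k=m-k$. Summing over all such exponent tuples gives the complete homogeneous symmetric polynomial $h_{m-k}(\lambda_{s_0},\ldots,\lambda_{s_k})$. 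By the standard identity $x^m[\lambda_{s_0},\ldots,\lambda_{s_k}]=h_{m-k}(\lambda_{s_0},\ldots,\lambda_{s_k})$, this is exactly the claimed formula. Linearity then extends the formula to every polynomial and hence to $f$, with confluent divided differences handling repeated eigenvalues by continuity.

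The main obstacle is the monomial identity for divided differences. We would prove it by induction on $k$, using the recursion $f[x_0,\ldots,x_k]=(f[x_1,\ldots,x_k]-f[x_0,\ldots,x_{k-1}])/(x_k-x_0)$ together with the identity $h_r(x_1,\ldots,x_k)-h_r(x_0,\ldots,x_{k-1})=(x_k-x_0)h_{r-1}(x_0,\ldots,x_k)$.
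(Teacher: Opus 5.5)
The paper does not prove this statement; it quotes it from Higham. Your reduction to polynomials is the standard route, and its combinatorial core is correct. Only weakly increasing walks contribute to $(T^m)_{ij}$. Grouping them by the indices visited gives $t_{s_0s_1}\cdots t_{s_{k-1}s_k}\,h_{m-k}(\lambda_{s_0},\ldots,\lambda_{s_k})$. Your induction for $x^m[x_0,\ldots,x_k]=h_{m-k}(x_0,\ldots,x_k)$ works via $x_k^a-x_0^a=(x_k-x_0)h_{a-1}(x_0,x_k)$. Continuity legitimately extends that identity to coincident nodes, since both sides are continuous in the nodes.

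\textbf{The gap is the step ``and hence to $f$''.} From $f(T)=p(T)$ and the formula for $p$, you still need $p[\lambda_{s_0},\ldots,\lambda_{s_k}]=f[\lambda_{s_0},\ldots,\lambda_{s_k}]$ for every chain in $S_{ij}$.

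\emph{Why it fails.} The interpolant from the definition of a primary matrix function matches $f$ at $\lambda$ only in derivatives of order below the index of $\lambda$ (the size of its largest Jordan block). A chain, however, may pass through every diagonal position equal to $\lambda$. Its divided difference can then involve derivatives up to order $m_\lambda-1$, where $m_\lambda$ is the algebraic multiplicity.

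\emph{A concrete instance.} Take $\mu\neq\lambda$, $b=1/(\mu-\lambda)$, and $T$ with rows $(\lambda,1,b)$, $(0,\mu,1)$, $(0,0,\lambda)$. Then $T-\lambda I$ has rank one, so $\lambda$ is semisimple and $p(x)=f(\lambda)+f[\lambda,\mu](x-\lambda)$. Hence $p[\lambda,\lambda]=f[\lambda,\mu]\neq f'(\lambda)$ and $p[\lambda,\mu,\lambda]=0\neq f[\lambda,\mu,\lambda]$ in general; indeed $f'(\lambda)$ need not even exist under the stated hypothesis. Both sums for $f_{13}$ equal $b\,f[\lambda,\mu]$, but not term by term. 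So your argument does not reach the stated formula, and continuity in the nodes does not help at this stage.

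\emph{The fix.} Take $p$ to be the Hermite interpolant of $f$ at each eigenvalue $\lambda$ through derivative order $m_\lambda-1$. This requires those derivatives of $f$ to exist, which is exactly what the right-hand side needs in order to be defined. So ``defined on the spectrum'' must be read in this stronger sense. Since $m_\lambda$ is at least the index, $p(T)=f(T)$ still holds. Each value $\lambda$ occurs at most $m_\lambda$ times in any chain, and a divided difference depends only on the Hermite data at its nodes. Therefore every divided difference in the formula agrees for $p$ and $f$, and with this choice your proof is complete. For the paper's application, $f=\log$ is smooth on $(0,\infty)$, so nothing is lost there.
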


Let $f(x)=\log x$. Using the standard formula for functions of upper triangular matrices via divided differences, we obtain the following.
\[
\bigl(\log(A(t)C)\bigr)_{13}
=
t^2
\left[
\frac{c_3}{2} f[c_1,c_3]
+
c_2c_3 f[c_1,c_2,c_3]
\right],
\]
where $f[\cdot,\cdot]$ and $f[\cdot,\cdot,\cdot]$ denote the first and second divided differences.
For completeness, we show that the coefficient $c_2$ is strictly negative.

\begin{rem}\label{rem:log-int-rep}
{\rm For \(f(x)=\log x\), the divided differences admit the integral representations
\[
f[a,c]
=
\int_0^\infty \frac{du}{(u+a)(u+c)},
\]
and
\[
f[a,b,c]
=
-\int_0^\infty
\frac{du}{(u+a)(u+b)(u+c)}.
\]

These identities provide convenient (and possibly well-known) integral representations for the divided
differences of \(\log x\), and will be useful in the proof of
Lemma~\ref{lem:log-dd}.}
\end{rem}

\begin{lem}\label{lem:log-dd}
{\rm Let \(f(x)=\log x\). If 
$
0<a\le c\le b,\;
a<b,
$
then
$
\frac{c}{2}f[a,c]+bc\,f[a,b,c]<0.
$}
\end{lem}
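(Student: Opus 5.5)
We will prove the inequality directly from the integral representations in Remark~\ref{rem:log-int-rep}, combining the two divided differences into a single integral whose integrand is negative for every $u>0$. Substituting the formulas gives
\[
\frac{c}{2}f[a,c]+bc\,f[a,b,c]
=\int_0^\infty \frac{c}{(u+a)(u+c)}\left[\frac12-\frac{b}{u+b}\right]du
=\int_0^\infty \frac{c\,(u-b)}{2(u+a)(u+b)(u+c)}\,du .
\]
The integrand is negative on $(0,b)$ and positive on $(b,\infty)$, so we cannot conclude pointwise. Instead we will compute the integral in closed form, or at least reduce it to an elementary expression whose sign we can read off.

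For the closed form we use partial fractions. Writing
\[
\frac{u-b}{(u+a)(u+b)(u+c)}=\frac{\alpha}{u+a}+\frac{\beta}{u+b}+\frac{\gamma}{u+c},
\]
we get $\alpha=\frac{-(a+b)}{(b-a)(c-a)}$, $\beta=\frac{-2b}{(a-b)(c-b)}$ and $\gamma=\frac{-(c+b)}{(a-c)(b-c)}$. These satisfy $\alpha+\beta+\gamma=0$, so the integral converges and equals $-(\alpha\log a+\beta\log b+\gamma\log c)$. This assumes $a,b,c$ are distinct. The degenerate cases $c=a$ and $c=b$ (with $a<b$) are handled either by continuity or by recomputing directly. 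For instance, when $c=b$ the bracket becomes $\frac{b}{2(u+a)(u+b)}\cdot\frac{u-b}{u+b}$, which can be integrated explicitly.

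An alternative that may avoid heavy algebra is to write the expression in the form $\frac{c}{2}\bigl(f[a,c]-2b\,f[a,b,c]\cdot(-1)\bigr)$ and use the mean value representation of divided differences: $f[a,c]=1/\xi$ and $f[a,b,c]=-1/(2\eta^2)$ for some $\xi,\eta$ in the relevant intervals. This gives $\frac{c}{2}\bigl(\frac1\xi-\frac{b}{\eta^2}\bigr)$, and the goal becomes $\eta^2<b\xi$. The difficulty is that $\xi\in(a,c)$ and $\eta\in(a,b)$ are uncontrolled relative to each other, so this route is probably too crude. The main plan therefore stays with the explicit integral.

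The main obstacle is the sign analysis of the explicit logarithmic expression. We will normalize by scaling: divided differences of $\log$ are homogeneous, with $f[ta,tc]=t^{-1}f[a,c]$ and $f[ta,tb,tc]=t^{-2}f[a,b,c]$, so the whole expression is scale invariant and we may set $b=1$, leaving $0<a\le c\le 1$ with $a<1$. We then treat the result as a function $g(a,c)$. We would first check the boundary $c=1$, where the expression reduces to a one-variable inequality of the form $\log(1/a)<\text{rational in } a$, provable by differentiating. Next we would show that $g$ is monotone in $c$, for example by differentiating under the integral sign, $\partial_c$ of $\frac{c}{u+c}$ being $\frac{u}{(u+c)^2}>0$, and comparing it with the weight $(u-1)$. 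If monotonicity fails, we would instead split the integral at $u=b$ and bound the positive tail $\int_b^\infty$ by the negative part using the substitution $u\mapsto b^2/u$. Under this substitution $(u-b)$ becomes $-(b/u)(u-b)\cdot$(factor), and the condition $a\le c\le b$ should make the reflected negative contribution dominate pointwise. We expect this reflection comparison to be the step where the hypotheses $a\le c\le b$ genuinely enter.
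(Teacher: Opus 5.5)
You have reduced the expression correctly to $\frac{c}{2}\int_0^\infty \frac{u-b}{(u+a)(u+b)(u+c)}\,du$, and your partial fractions and scale invariance are also correct. However, the proposal never establishes the sign of this integral. The closed-form route leaves exactly the hard part, the sign of $-(\alpha\log a+\beta\log b+\gamma\log c)$, untouched, and the way you propose to handle it would fail.

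Write $g(c)$ for the expression with $b=1$. Then $g$ is not monotone in $c$ on $[a,1]$:
\begin{itemize}
\item At $c=1$, $\partial_c g=\frac12\int_0^\infty \frac{u(u-1)}{(u+a)(u+1)^3}\,du>0$. The kernel $\frac{u-1}{(u+1)^3}$ has integral zero and the weight $\frac{u}{u+a}$ is increasing.
\item Near $c=a$ the derivative is negative when $a$ is small. For $a=0.01$ one finds $g(a)\approx-0.463$, $g(0.1)\approx-1.047$, $g(0.9)\approx-1.3633$ and $g(1)\approx-1.3627$.
\end{itemize}
So for small $a$ the largest value of $g$ on $[a,1]$ occurs at $c=a$, an endpoint your plan never examines. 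Checking $c=b$ plus a monotonicity argument therefore cannot close the proof. (Also, the one-variable inequality at $c=b$ is $\log(1/a)>2(1-a)/(1+a)$, the opposite direction from what you wrote.)

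What is missing is the computation behind your fallback, which is exactly the paper's proof and is enough on its own. Split at $u=b$ and substitute $u=b^2/v$ on $(b,\infty)$ to get
\[
I=\int_0^b (b-u)\left[\frac{b^3}{(b^2+au)(b^2+bu)(b^2+cu)}-\frac{1}{(u+a)(u+b)(u+c)}\right]du .
\]
Since $b^2+bu=b(u+b)$, the bracket is negative exactly when $b^2(u+a)(u+c)<(b^2+au)(b^2+cu)$. The key identity is
\[
(b^2+au)(b^2+cu)-b^2(u+a)(u+c)=(b^2-u^2)(b^2-ac),
\]
which is positive for $0<u<b$ because $ac\le ab<b^2$. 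Hence the integrand is negative on all of $(0,b)$, and $I<0$.

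This argument uses only $ac<b^2$ and treats $c=a$ and $c=b$ uniformly. It makes the closed form, the normalization and the monotonicity analysis unnecessary. As written, your proposal only asserts that the reflected part ``should'' dominate pointwise, and that assertion is the entire content of the proof.
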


\begin{proof}
Using the integral representations in Remark~\ref{rem:log-int-rep},
\[
f[a,c]
=
\int_0^\infty \frac{du}{(u+a)(u+c)},
\qquad
f[a,b,c]
=
-\int_0^\infty
\frac{du}{(u+a)(u+b)(u+c)},
\]
we may write
\[
\frac{c}{2}f[a,c]+bc\,f[a,b,c]
=
\frac{c}{2}
\int_0^\infty
\frac{u-b}{(u+a)(u+b)(u+c)}\,du.
\]
Hence, since \(c>0\), it is enough to prove that
\[
I:=
\int_0^\infty
\frac{u-b}{(u+a)(u+b)(u+c)}\,du
<0.
\]

Split the integral at \(u=b\). Applying the change of variables
\(u=b^2/v\) on \((b,\infty)\), after simplification we obtain
\[
I
=
\int_0^b
(b-u)
\left[
\frac{b^3}
{(b^2+au)(b^2+bu)(b^2+cu)}
-
\frac1{(u+a)(u+b)(u+c)}
\right]du.
\]

Because \(b^2+bu=b(u+b)\), the bracketed term is negative precisely when
\[
b^2(u+a)(u+c)
<
(b^2+au)(b^2+cu).
\]
Observe that
$
(b^2+au)(b^2+cu)-b^2(u+a)(u+c)
=
(b^2-u^2)(b^2-ac).
$
Since \(0<u<b\) and \(ac<b^2\), the right-hand side is positive.
Hence the integrand is negative on \((0,b)\), and therefore \(I<0\). Consequently,
$
\frac{c}{2}f[a,c]+bc\,f[a,b,c]<0. 
$
\end{proof}

We are now in a position to prove our main result concerning a classification of the bijective maps on \SIDM.

% \begin{thm}[Linear preservers of \SIDM]\label{SIDMpreserver}
% Let $\varphi:M_n(\mathbb R)\to M_n(\mathbb R)$ be a bijective linear map.
% Then
% \[
% \varphi(\SIDM)=\SIDM
% \]
% if and only if there exist a monomial matrix $S$ and a scalar $c>0$ such that either
% \[
% \varphi(A)=c\,SAS^{-1}\quad\text{for all }A,
% \qquad\text{or}\qquad
% \varphi(A)=c\,SA^TS^{-1}\quad\text{for all }A.
% \]
% \end{thm}
% \begin{theoremA}[Linear preservers of \SIDM]
% %\label{thm:mainA}
% Let $\varphi:M_n(\mathbb R)\to M_n(\mathbb R)$ be a bijective linear map.
% Then $\varphi(\SIDM)=\SIDM$ if and only if one of the following holds:
% \begin{enumerate}
% \item If $n=1$, there exists a scalar $c>0$ such that
% \[
% \varphi(A)=cA
% \qquad\text{for all }A\in M_1(\mathbb R).
% \]

% \item If $n=2$, there exist a permutation matrix $P$ and positive
% diagonal matrices $D_1,D_2$ such that either
% \[
% \varphi(A)=D_1PAP^TD_2
% \qquad\text{or}\qquad
% \varphi(A)=D_1PA^TP^TD_2
% \]
% for all $A\in M_2(\mathbb R)$.

% \item If $n\ge3$, there exist a scalar $c>0$ and a positive monomial
% matrix $S$ such that either
% \[
% \varphi(A)=cSAS^{-1}
% \qquad\text{or}\qquad
% \varphi(A)=cSA^TS^{-1}
% \]
% for all $A\in M_n(\mathbb R)$.
% \end{enumerate}
% \end{theoremA}

\begin{proof}[Proof of Theorem A]
For $n=1$, we have $\SIDM_1=(0,\infty)$. Since every bijective linear
map on $\mathbb R$ has the form $\varphi(A)=cA$ with $c\neq0$, we have
$\varphi(\SIDM_1)=\SIDM_1$ if and only if $c>0$.
For $n=2$, the result follows immediately from Theorem~\ref{imsidm2},
together with the linear preserver characterization of inverse
$M$-matrices.

Now let $n\ge3$. Suppose first that $\varphi(\SIDM)=\SIDM$.
By Lemma~\ref{lem:SIDM_preserver_GL} and the standard linear preserver
theorem for $GL_n(\mathbb R)$, either
$\varphi(A)=PAQ$ or $\varphi(A)=PA^TQ,$ 
for some $P,Q\in GL_n(\mathbb R)$. We treat the first case; the
transpose case is analogous.
\medskip
\noindent
\textbf{(Sufficiency).}
This follows directly from Theorem~\ref{SIDM}, since similarity
commutes with the matrix exponential, permutation and positive
diagonal similarities preserve essential nonnegativity, and
multiplication by $c>0$ corresponds to adding $(\log c)I$ to the
exponent. The transpose case follows similarly.

\medskip
\textbf{(Necessity).}
Assume that
$L(\SIDM)=\SIDM.$ 
We prove that $L(A)=c\,SAS^{-1}$ for some $c>0$ and monomial $S$.

\medskip
\textbf{Step 1: Reduction to nonnegative matrices.}
Let $D=\diag(x_1,\dots,x_n),$ with $x_k>0.$ 
Then $\log D=\diag(\log x_1,\dots,\log x_n)$ is diagonal and hence is essentially nonnegative.
Therefore,
$D=e^{\log D}\in\SIDM.$
Since $L$ preserves \SIDM, we have
\[
PDQ \in \SIDM_n \subseteq M_n(\mathbb R_{\ge 0})
\quad\text{for all } x_1,\dots,x_n>0.
\tag{1}
\]

Fix indices $\alpha,\beta$. The $(\alpha,\beta)$-entry of $PDQ$ is
\[
(PDQ)_{\alpha\beta}
= \sum_{k=1}^n p_{\alpha k}\,x_k\,q_{k\beta}.
\]
This is a linear function of the independent variables $x_k$, and by (1) it is
nonnegative for all choices $x_k>0$. Hence
$p_{\alpha k}\,q_{k\beta} \ge 0$ for all  $\alpha,\beta,k.$ Now fix $k$. Then, all pair-wise products of entries from the $k$th column of $P$ and the
$k$th row of $Q$ are nonnegative.
It follows that every entry in column $j$ of $P$ (and consequently every entry in row $j$ of $Q$) has the same sign or is zero. Furthermore, the signs of the entries from the $j$th column of $P$
and the $j$th row of $Q$ are sign-coherent. 
Hence, there exists a signature matrix $S$ such that the $j$th column of $PS$ and the $j$th row of $SQ$ are both entry-wise nonnegative. Replacing $(P,Q)$ by $(PS,SQ)$ does not affect the map
$A\mapsto PAQ$, and we may therefore assume without loss of generality 

\medskip
\textbf{Step 2: Monomial structure of $P$ and $Q$.}

Since $L(\SIDM)=\SIDM$, the inverse map
$L^{-1}(A)=P^{-1}AQ^{-1}$ 
also preserves \SIDM. Repeating Step 1 gives
$P^{-1}\ge0$ and $Q^{-1}\ge0.$ 
Hence there exist permutation matrices $\Pi,\Gamma$ and positive diagonal matrices $D_1,D_2$ such that
$P=D_1\Pi,$ $Q=\Gamma D_2.$ 

\medskip
\textbf{Step 3: Showing that \(\Gamma=\Pi^{-1}\).}
Since \(I\in \SIDM\) and \(L(\SIDM)=\SIDM\),
we have $PQ=L(I)=D_1\Pi\Gamma D_2\in \SIDM.$
The matrices in \(\SIDM\) have positive main diagonal entries: if \(A=e^B\) with
\(B\) essentially nonnegative, then, for some \(\alpha>0\), $A=e^{-\alpha}e^{B+\alpha I},$ with $B+\alpha I\ge0,$ 
and hence \(A_{ii}>0\). Thus \((PQ)_{ii}>0\) for every \(i\). Since
\(D_1=\diag(a_i)\), \(D_2=\diag(b_i)\), with \(a_i,b_i>0\), we have $(PQ)_{ii}=a_i(\Pi\Gamma)_{ii}b_i.$
Therefore \((\Pi\Gamma)_{ii}=1\) for all \(i\).
Consequently, $L(A)=D_1\Pi A\Pi^{-1}D_2.$

\medskip
\textbf{Step 4: Reduction to a scalar factor.}

%
%Thus, multiplying $P$ on the right and $Q$ on the left by the same signature matrix,
%we may assume without loss of generality that
%\[
%P\ge 0,\qquad Q\ge 0.
%\tag{3}
%\]
%
% (Let
%$S=\diag(\varepsilon_1,\dots,\varepsilon_n)$ with $\varepsilon_k=\pm1$ chosen so
%that the $k$-th column of $PS$ is nonnegative. Then the $k$-th row of $SQ$ is also
%nonnegative. Replacing $(P,Q)$ by $(PS,SQ)$ does not affect the map
%$A\mapsto PAQ$, and we may therefore assume without loss of generality that
%$P\ge 0$ and $Q\ge 0$.)

Now we establish that $D_1D_2$ is a scalar matrix. From Step 3, we have
$L(A)=D_1\Pi A\Pi^{-1}D_2.$ 
Set $S=D_1\Pi.$ 
Then $S$ is monomial and
$L(A)=SAS^{-1}C$, where
$C:=D_1D_2.$ 
Since conjugation by a monomial matrix preserves \SIDM, it suffices to show that if
$AC\in \SIDM$ for all $A\in \SIDM,$ 
where $C=\diag(c_1,\dots,c_n)$ with $c_i>0,$
then $C$ is a scalar matrix.
For the sake of a contradiction, assume that $C$ is not a scalar matrix. Since $n\ge 3$, we may choose indices, relabeled as $1,2,3$, such that
$0<c_1<c_2<c_3.$
Set $N=E_{12}+E_{23}$.
Then $N$ is essential nonnegative matrices and $N^3=0$, so
\[
A(t):=e^{tN}
=
I+tE_{12}+tE_{23}+\frac{t^2}{2}E_{13}
\in \SIDM
\qquad (t\ge 0).
\]
By assumption, $A(t)C\in \SIDM$ $(t\ge 0)$.
Restricting to the principal $3\times 3$ block, we obtain
\[
A(t)C
=
\begin{pmatrix}
c_1 & tc_2 & \frac{t^2}{2}c_3\\
0 & c_2 & tc_3\\
0 & 0 & c_3
\end{pmatrix}.
\]

Applying Lemma~\ref{lem:log-dd} with
$a=c_1, b=c_2,$ and $c=c_3$, yields
$\bigl(\log(A(t)C)\bigr)_{13}<0.$ 
This contradicts the fact that \(\log(A(t)C)\) must be essentially
nonnegative whenever \(A(t)C\in \SIDM\). Hence \(C=cI\) for some
\(c>0\). Since \(C=D_1D_2\), we have \(D_2=cD_1^{-1}\), and therefore
$
L(A)
=
D_1\Pi A\Pi^{-1}D_2
=
c(D_1\Pi)A(D_1\Pi)^{-1}.
$ 
Setting \(S=D_1\Pi\), we obtain
$
L(A)=cSAS^{-1},
$
where \(S\) is monomial. This completes the proof.

Conversely, for $n=1$ the assertion is immediate, since multiplication
by a positive scalar preserves $\SIDM_1=(0,\infty)$. For $n=2$, the
result follows from Theorem~\ref{imsidm2} and the corresponding linear
preserver characterization of inverse $M$-matrices. It remains to
consider $n\ge3$.
Finally, let $n\ge3$ and write
$\varphi(A)=cSAS^{-1}$ or $\varphi(A)=cSA^TS^{-1}$, where $S$ is a
positive monomial matrix and $c>0$. If $A\in\SIDM_n$, then for every
$m\in\mathbb N$ there exists $B_m\ge0$ such that $A=B_m^m$. Since
$A$ is invertible, so is $B_m$. Moreover, respectively,
\[
\varphi(A)=\left(c^{1/m}SB_mS^{-1}\right)^m
\qquad\text{or}\qquad
\varphi(A)=\left(c^{1/m}SB_m^TS^{-1}\right)^m,
\]
and in either case the displayed root is nonnegative. Since
$\varphi(A)$ is invertible, it follows that
$\varphi(A)\in\SIDM_n$. Thus
$\varphi(\SIDM_n)\subseteq\SIDM_n$. Applying the same argument to
$\varphi^{-1}$ gives the reverse inclusion, and hence
$\varphi(\SIDM_n)=\SIDM_n$.
\end{proof}

\section{Infinitely Divisible Matrices: Proof of Theorem B}

% We now consider the linear preserver problem for infinitely divisible nonnegative matrices, which contain \SIDM\ as a distinguished subclass. A key observation is that preservation of \IDM~already forces preservation of the entire cone of entry-wise nonnegative matrices. This allows us to apply our characterization of bijective linear maps preserving entry-wise nonnegativity and, together with the structure imposed by infinite divisibility, obtain a complete characterization of the preservers of \IDM.

% Linear preservers of nonnegative matrices have been studied under various additional assumptions, including spectral and rank-preserving conditions; see, for example, \cite{Minc1974,LiTamTsing1997}. To the best of our knowledge, the characterization needed here, concerning bijective linear maps that preserve entry-wise nonnegativity in both directions, has not been explicitly presented in the literature. We therefore include it for completeness.

We now consider the linear preserver problem for infinitely divisible nonnegative matrices that contain \SIDM\ as a distinguished subclass. A key observation is that preservation of $\IDM$ implies
preservation of the entire cone of entry-wise nonnegative matrices. We first describe the bijective linear operators that preserve entry-wise nonnegativity, and then use the additional structure imposed by infinite divisibility to obtain a complete characterization of the preservers of $\IDM$. 

Linear preservers of nonnegative matrices have been studied under various additional assumptions, including spectral and rank-preserving conditions; see, for example, \cite{Minc1974,LiTamTsing1997}. We include the elementary result needed here for completeness.

\begin{rem}\label{rem:extreme-rays-nonnegative}
{\rm Recall that a ray $\mathbb R_+x$ of a convex cone $C$ is
\textit{extreme} if $x=y+z$, with $y,z\in C$, implies
$y,z\in\mathbb R_+x$. The extreme rays of $M_n(\mathbb R_+)$ are
precisely $\mathbb R_+E_{ij},$ with $1\leq i,j\leq n$ and where \(E_{ij}\) denotes the standard matrix units.
Indeed, a nonnegative matrix with at least two positive entries can
be written as a sum of two distinct and linearly independent nonzero nonnegative matrices. Conversely, if $E_{ij}=X+Y$ with $X,Y\geq0$,
then $X$ and $Y$ must vanish outside the $(i,j)$-entry, and hence both
belong to $\mathbb R_+E_{ij}$.}
\end{rem}

\begin{thm}\label{nonnegativepreserver}
{\rm Let $\Phi:M_n(\mathbb R)\to M_n(\mathbb R)$ be a bijective linear map.
Then
\[
X\geq0 \iff \Phi(X)\geq0
\qquad\text{for all }X\in M_n(\mathbb R)
\]
if and only if there exist positive scalars $\lambda_{ij}$ and a
permutation $\sigma$ of $\{1,\ldots,n^2\}$ such that
$\Phi(E_{ij})=\lambda_{ij}E_{\sigma(i,j)},$ for $1\leq i,j\leq n.$}
\end{thm}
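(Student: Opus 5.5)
The plan is to use the cone structure of $M_n(\mathbb R_+)$ and the extreme-ray description in Remark~\ref{rem:extreme-rays-nonnegative}. Here $E_{\sigma(i,j)}$ means the matrix unit indexed by $\sigma$ applied to the pair $(i,j)$, identifying index pairs with $\{1,\dots,n^2\}$.

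For sufficiency, assume $\Phi(E_{ij})=\lambda_{ij}E_{\sigma(i,j)}$ with $\lambda_{ij}>0$. Then for $X=\sum x_{ij}E_{ij}$ we have
\[
\Phi(X)=\sum_{i,j}\lambda_{ij}x_{ij}E_{\sigma(i,j)}.
\]
Since $\sigma$ is a bijection of index pairs, each entry of $\Phi(X)$ is exactly one term $\lambda_{ij}x_{ij}$. Because each $\lambda_{ij}$ is positive, $\Phi(X)\ge0$ if and only if every $x_{ij}\ge0$, that is, if and only if $X\ge 0$.

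For necessity, the hypothesis says precisely that $\Phi(M_n(\mathbb R_+))=M_n(\mathbb R_+)$. The inclusion $\subseteq$ is the forward implication. For $\supseteq$, take $Y\ge0$ and set $X=\Phi^{-1}(Y)$; then $\Phi(X)=Y\ge0$, so $X\ge0$. Hence $\Phi$ restricts to a linear bijection of the cone onto itself, with inverse $\Phi^{-1}$ doing the same.

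The key step is that such a linear cone automorphism maps extreme rays to extreme rays. Suppose $\Phi(E_{ij})=Y+Z$ with $Y,Z\ge0$. Applying $\Phi^{-1}$ gives $E_{ij}=\Phi^{-1}(Y)+\Phi^{-1}(Z)$, and both summands are nonnegative. By extremality, $\Phi^{-1}(Y)$ and $\Phi^{-1}(Z)$ lie in $\mathbb R_+E_{ij}$, so $Y,Z\in\mathbb R_+\Phi(E_{ij})$. Also $\Phi(E_{ij})\neq0$ because $\Phi$ is injective. Therefore $\mathbb R_+\Phi(E_{ij})$ is an extreme ray, and by Remark~\ref{rem:extreme-rays-nonnegative} it equals $\mathbb R_+E_{kl}$ for some $(k,l)$. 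This gives $\Phi(E_{ij})=\lambda_{ij}E_{\sigma(i,j)}$ with $\lambda_{ij}>0$.

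Finally, $\sigma$ is injective: if two distinct units had the same image pair, their images would be linearly dependent, contradicting injectivity of $\Phi$. An injective map of the finite set $\{1,\dots,n^2\}$ to itself is a permutation. The only step needing care is the extreme-ray preservation above, which relies on $\Phi^{-1}$ also preserving the cone; that is exactly why the two-sided hypothesis ``$X\ge0\iff\Phi(X)\ge0$'' is required.
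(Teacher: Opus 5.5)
Your proof is correct and follows essentially the same route as the paper. The two-sided hypothesis makes $\Phi$ an automorphism of $M_n(\mathbb R_+)$; extreme rays go to extreme rays by pulling decompositions back with $\Phi^{-1}$; injectivity of $\Phi$ makes $\sigma$ a permutation; and sufficiency is immediate from linearity. You simply fill in details the paper leaves implicit, such as $\Phi(E_{ij})\neq 0$ and the explicit linear-dependence argument for injectivity of $\sigma$.
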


\begin{proof}
Suppose first that
$X\geq0 \iff \Phi(X)\geq0.$ 
Then $\Phi(M_n(\mathbb R_+))=M_n(\mathbb R_+)$, so $\Phi$ is a linear
automorphism of the nonnegative cone. Such an automorphism maps
extreme rays onto extreme rays, as follows immediately by applying
$\Phi^{-1}$ to a decomposition of $\Phi(X)$ into two elements of the
cone. Hence, by Remark~\ref{rem:extreme-rays-nonnegative},
$\Phi(E_{ij})=\lambda_{ij}E_{\sigma(i,j)}$, 
for some $\lambda_{ij}>0$. Since $\Phi$ is injective, differentfferent matrix
units are assigned to different extreme rays. Thus $\sigma$ is injective
and, since $\{1,\ldots,n^2\}$ is finite, it is a permutation.

Conversely, if
$\Phi(E_{ij})=\lambda_{ij}E_{\sigma(i,j)}$ with $\lambda_{ij}>0$
and $\sigma$ a permutation, then linearity immediately gives
$ X\geq0 \iff \Phi(X)\geq0.$  \qedhere
\end{proof}

Next, we show that every bijective linear preserver of $\IDM$ preserves the nonnegative cone.

\begin{lem}\label{IDMpreserver_nonnegative}

{\rm Let
\(
\phi:M_n(\mathbb R)\to M_n(\mathbb R)
\)
be a bijective linear map such that
$
\varphi(\IDM) = \IDM.
$
Then
$
\phi(M_n(\mathbb R_+))=M_n(\mathbb R_+).
$}
\end{lem}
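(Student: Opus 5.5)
We need to show that $\phi(\IDM)=\IDM$ forces $\phi(M_n(\mathbb R_+))=M_n(\mathbb R_+)$. The plan is to show that the closed convex cone generated by $\IDM$ is exactly $M_n(\mathbb R_+)$. Since $\phi$ is linear and bijective, it commutes with taking convex cones and, being a homeomorphism, with taking Euclidean closures. From $\phi(\IDM)=\IDM$ we would then get
\[
\phi\bigl(\overline{\operatorname{cone}}(\IDM)\bigr)=\overline{\operatorname{cone}}(\IDM),
\]
and the lemma follows once we know $\overline{\operatorname{cone}}(\IDM)=M_n(\mathbb R_+)$.

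\textbf{Inclusion $\overline{\operatorname{cone}}(\IDM)\subseteq M_n(\mathbb R_+)$.} This is immediate, because every infinitely divisible matrix is nonnegative by definition and $M_n(\mathbb R_+)$ is a closed convex cone.

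\textbf{Inclusion $M_n(\mathbb R_+)\subseteq\overline{\operatorname{cone}}(\IDM)$.} By Remark~\ref{rem:extreme-rays-nonnegative}, $M_n(\mathbb R_+)$ is the conical hull of the matrix units $E_{ij}$, so it suffices to show that each $E_{ij}$ lies in $\overline{\operatorname{cone}}(\IDM)$ (in fact we will get it in the closure of $\IDM$).
\begin{itemize}
\item For a diagonal unit, $E_{ii}$ is idempotent and nonnegative, so $E_{ii}^m=E_{ii}$ for every $m$. Hence $E_{ii}$ is its own nonnegative $m$th root, and $E_{ii}\in\IDM$.
\item For $i\neq j$, the unit $E_{ij}$ is nilpotent and not itself infinitely divisible, so we approximate it. By Theorem~\ref{SIDM}, $e^{tE_{ij}}=I+tE_{ij}\in\SIDM\subseteq\IDM$ for all $t\ge0$. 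Then $\tfrac1t(I+tE_{ij})\in\operatorname{cone}(\IDM)$, and this tends to $E_{ij}$ after subtracting $\tfrac1t I$. To stay inside the cone, we instead use $\tfrac1t(I+tE_{ij})=\tfrac1t I+E_{ij}\to E_{ij}$ as $t\to\infty$.
\end{itemize}
Alternatively, $E_{ii}+E_{ij}$ is idempotent and nonnegative, since $(E_{ii}+E_{ij})^2=E_{ii}+E_{ij}$. It therefore lies in $\IDM$, and the closure argument can be carried out with it directly. Either way, each $E_{ij}$ lies in the closed cone generated by $\IDM$.

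\textbf{Transfer through $\phi$.} If $\phi(\IDM)=\IDM$, then by linearity $\phi(\operatorname{cone}(\IDM))=\operatorname{cone}(\IDM)$. Continuity of $\phi$ and $\phi^{-1}$ (finite dimensions) then gives equality of the closures, which by the two inclusions above is $M_n(\mathbb R_+)$.

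\textbf{Main obstacle.} The only delicate point is that $\IDM$ is neither convex nor closed, so we must pass to the closed conical hull rather than argue with $\IDM$ itself. Finite dimensionality ensures a bijective linear map respects both operations, so this step is only a matter of stating it carefully.
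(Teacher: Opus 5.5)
Your proof is correct and is essentially the paper's argument. The paper uses the same test matrices, $E_{ii}\in\IDM$ and $I+tE_{ij}\in\IDM$ for $t\ge0$, and the same $t\to\infty$ limit, run entrywise on $\phi(I)+t\phi(E_{ij})\ge0$ and then repeated for $\phi^{-1}$; your closed-conical-hull formulation packages that limit and gives both inclusions at once.

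Your aside needs a small repair: the single idempotent $E_{ii}+E_{ij}$ together with $E_{ii}$ does not put $E_{ij}$ in the closed cone. You would need the whole idempotent family $E_{ii}+sE_{ij}$, scaled by $1/s$ with $s\to\infty$. Your main argument does not depend on this aside.
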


\begin{proof}
We first prove that
$\phi(M_n(\mathbb R_+))\subseteq M_n(\mathbb R_+).$ %Let \(E_{ij}\) denote the standard matrix units.

%\medskip

\noindent
\textbf{Step 1: Diagonal matrix units.}

For each \(i\), we have
$E_{ii}^m=E_{ii}$ for all $m\in\mathbb N.$ 
Thus \(E_{ii}\) is infinitely divisible in \(M_n(\mathbb R_+)\), since for every \(m\in\mathbb N\) we may choose
$(E_{ii})_{(m)}=E_{ii}.$ 
Also \(E_{ii}\ge0\). Hence
$E_{ii}\in F.$ 
By hypothesis,
$\phi(E_{ii})\in F.$ 
In particular,
$\phi(E_{ii})\ge0.$ 

\medskip

\noindent
\textbf{Step 2: Non-diagonal matrix units.}

Let \(i\neq j\). Then
$E_{ij}^2=0.$ 
Fix \(t\ge0\) and \(m\in\mathbb N\). Since \(E_{ij}^2=0\), the binomial formula gives
\[
\left(I+\frac{t}{m}E_{ij}\right)^m
=
I+m\frac{t}{m}E_{ij}
=
I+tE_{ij}.
\]
Moreover,
$I+\frac{t}{m}E_{ij}\ge0.$ 
Therefore, for every \(m\in\mathbb N\), the matrix
$I+\frac{t}{m}E_{ij}$ 
is an entry-wise nonnegative \(m\)th root of \(I+tE_{ij}\). Hence
$I+tE_{ij}\in F.$
By hypothesis,
$\phi(I+tE_{ij})\in F.$ 
Since every element of \(F\) is nonnegative, we get
$\phi(I+tE_{ij})\ge0$ for all $t\ge0$.
By the linearity of \(\phi\),
$\phi(I+tE_{ij})
=
\phi(I)+t\phi(E_{ij}).$ 
Thus
$\phi(I)+t\phi(E_{ij})\ge0$ for all $t\ge0$.

We claim that
$\phi(E_{ij})\ge0.$ 
If some entry of \(\phi(E_{ij})\) is negative, then there exist indices \(k,l\) such that
$(\phi(E_{ij}))_{kl}<0.$
The \((k,l)\)-entry of \(\phi(I)+t\phi(E_{ij})\) is
$
(\phi(I))_{kl}+t(\phi(E_{ij}))_{kl}.
$
Since
$
(\phi(E_{ij}))_{kl}<0,
$
this expression becomes negative for sufficiently large \(t\ge0\). This contradicts
$\phi(I)+t\phi(E_{ij})\ge0$ for all $t\ge0$.
Therefore
$\phi(E_{ij})\ge0$
for all \(i\neq j\).
Combining this with Step 1, we have shown that
$
\phi(E_{ij})\ge0$ for all $i,j.$

\medskip

\noindent
\textbf{Step 3: Nonnegative matrices are mapped to nonnegative matrices.}

Let \(A\in M_n(\mathbb R_+)\) be an entry-wise nonnegative matrix. Then by linearity,
$\phi(A)
=
\sum_{i,j}a_{ij}\phi(E_{ij}),$ and  
$\phi(A)\ge0.$
Thus $\phi(M_n(\mathbb R_+))\subseteq M_n(\mathbb R_+).$

\medskip

\noindent
\textbf{Step 4: Reverse inclusion.}

Since \(\phi\) is bijective and linear, \(\phi^{-1}\) is also linear and bijective.
% We claim that \(\phi^{-1}\) satisfies the same preservation property. Indeed, for any \(B\in M_n(\mathbb R)\), let
% \[
% A=\phi^{-1}(B).
% \]
% Then
% \[
% B=\phi(A).
% \]
% Using the hypothesis
% \[
% A\in F \iff \phi(A)\in F,
% \]
% we get
% \[
% \phi^{-1}(B)\in F \iff B\in F.
% \]
% Thus
% \[
% B\in F \iff \phi^{-1}(B)\in F.
% \]
% So \(\phi^{-1}\) also preserves \(F\) in both directions.
Applying Steps 1--3 to \(\phi^{-1}\), we obtain
$\phi^{-1}(M_n(\mathbb R_+))
\subseteq
M_n(\mathbb R_+).$ 
Now, let \(B\in M_n(\mathbb R_+)\). Then
$\phi^{-1}(B)\in M_n(\mathbb R_+)$ and 
hence $B=\phi(\phi^{-1}(B))
\in
\phi(M_n(\mathbb R_+)).$
Therefore
$ M_n(\mathbb R_+)
\subseteq
\phi(M_n(\mathbb R_+)),$ and by symmetry, 
$\phi(M_n(\mathbb R_+))
=
M_n(\mathbb R_+).$
This completes the proof.
\end{proof}

% We now prove our main result on linear preservers of the class $\IDM$.

% \begin{thm}
% Let $\varphi : M_n(\mathbb{R}) \to M_n(\mathbb{R})$ be a bijective linear map.
% Then 
% \[
% \varphi(\IDM) = \IDM
% \]
% if and only if there exist a permutation matrix $P$, a positive diagonal matrix $D$, and a scalar $c > 0$ such that either
% \[
% \varphi(A) = c\, (PD) A (PD)^{-1} \quad  \text{for all } A,
% \qquad \text{or} \qquad
% \varphi(A) = c\, (PD) A^T (PD)^{-1} \quad \text{for all } A.
% \]   
% \end{thm}
% \begin{proof}
% Suppose
% $\varphi(\IDM)=\IDM.$ 
% Then, by Theorem \ref{IDMpreserver_nonnegative},
% $\varphi(M_n(\mathbb R_+))=M_n(\mathbb R_+).$ 
% Hence, by Theorem \ref{nonnegativepreserver}, we have 
% $\varphi(A)=D_1PAQD_2$
% or
% $\varphi(A)=D_1PA^TQD_2,$ 
% where \(P,Q\) are permutation matrices and \(D_1,D_2\) are positive diagonal matrices.

% Using similar arguments as in Step 3 and Step 4 of Theorem \ref{SIDMpreserver}, we obtain
% $Q=P^T$ and $D_2=cD_1^{-1}$ 
% for some \(c>0\). Therefore,
% $\varphi(A)=cD_1PAP^TD_1^{-1}$ 
% or $\varphi(A)=cD_1PA^TP^TD_1^{-1}.$ 
% It follows that
% $\varphi(A)=c(PD_1)A(PD_1)^{-1}$ 
% or
% $\varphi(A)=c(PD_1)A^T(PD_1)^{-1}.$

% Conversely, let $\varphi(A)=cSAS^{-1}$ 
% or $\varphi(A)=cSA^TS^{-1},$ 
% where \(S\) is a positive monomial matrix and \(c>0\). If $A=B_m^m$ with \(B_m\ge0\), then
% $\left(c^{1/m}SB_mS^{-1}\right)^m=\varphi(A),$ 
% and $c^{1/m}SB_mS^{-1}\ge0.$ 
% Hence \(\varphi(A)\in\IDM\). Applying the same argument to \(\varphi^{-1}\) yields
% $\varphi(\IDM)=\IDM.$ 
% \end{proof}

By Lemma~\ref{IDMpreserver_nonnegative} and Theorem~\ref{nonnegativepreserver}, there exist positive scalars $\lambda_{ij}$ and a permutation $\sigma$ of $\{1,\ldots,n^2\}$ such that $\varphi(E_{ij})=\lambda_{ij}E_{\sigma(i,j)}.$  The preservation of infinite divisibility further restricts the form of $\sigma$ and the scalars $\lambda_{ij}$. We first present two auxiliary facts required for the proof of our main result. 

For nonnegative $2\times2$ matrices, membership in $\SIDM$ has the following simple characterization. \begin{thm}\cite{MST}\label{thm:SIDM2-det}
{\rm Let $A\in M_2(\mathbb R)$ be nonnegative. Then $ A\in\SIDM \Longleftrightarrow \det A>0. $ }\end{thm}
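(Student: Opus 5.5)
The plan is to prove the two implications separately, using only facts already recorded in the paper. The reverse implication will go through Theorem~\ref{imsidm2}, which identifies $\SIDM$ with $\IM$ for $2\times 2$ matrices.

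For the forward direction, suppose $A\in\SIDM$. By Theorem~\ref{SIDM}, $A=e^B$ with $B$ essentially nonnegative. Then
\[
\det A=e^{\operatorname{tr}B}>0 .
\]
Alternatively, one can argue directly: $A$ is invertible and is the square of a real matrix $K_2$, so $\det A=(\det K_2)^2>0$.

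For the reverse direction, write $A=\begin{pmatrix} a & b\\ c & d\end{pmatrix}\ge 0$ with $\Delta:=ad-bc>0$. Since $bc\ge 0$, we get $ad>bc\ge0$, so $a,d>0$. The inverse is
\[
A^{-1}=\frac{1}{\Delta}\begin{pmatrix} d & -b\\ -c & a\end{pmatrix}.
\]

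\begin{itemize}
\item $A^{-1}$ is a $Z$-matrix, because its off-diagonal entries $-b/\Delta$ and $-c/\Delta$ are nonpositive.
\item Its principal minors are all positive: the diagonal entries are $d/\Delta>0$ and $a/\Delta>0$, and $\det A^{-1}=1/\Delta>0$.
\item By the standard criterion that a $Z$-matrix with all principal minors positive is a nonsingular $M$-matrix, $A^{-1}$ is a nonsingular $M$-matrix.
\item One can also check this directly. The characteristic polynomial of $A^{-1}$ has positive trace $(a+d)/\Delta$ and positive determinant, so both eigenvalues have positive real part. A second route is to use that a $Z$-matrix with a nonnegative inverse (here $A$) is a nonsingular $M$-matrix.
\end{itemize}

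Since $A\ge0$ and $A^{-1}$ is a nonsingular $M$-matrix, $A\in\IM$, and Theorem~\ref{imsidm2} gives $A\in\SIDM$.

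The only step that needs care is certifying that $A^{-1}$ is an $M$-matrix. The plan is to cite the positive-principal-minor characterization of nonsingular $M$-matrices, which in the $2\times2$ case amounts to the explicit checks above.

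A backup route avoids $M$-matrix theory altogether. Since $A=B^m$ for some $B$ is not directly easy to produce, one would instead construct a real logarithm $B=\log A$ and show its off-diagonal entries have the signs of $b$ and $c$. For a $2\times 2$ matrix, $\log A=\alpha I+\beta A$, and the eigenvalues $\lambda_1>\lambda_2$ of $A$ are real and positive because the discriminant $(a-d)^2+4bc\ge0$ and $\Delta>0$. Then $\beta=(\log\lambda_1-\log\lambda_2)/(\lambda_1-\lambda_2)>0$, so $\log A$ is essentially nonnegative, with the repeated-eigenvalue case handled by continuity. Theorem~\ref{SIDM} then gives $A\in\SIDM$.
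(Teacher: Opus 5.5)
The paper does not prove this statement; it is imported from \cite{MST}, so there is no in-paper argument to compare yours with. Your proof is correct. The forward direction is fine as written, and it uses neither $n=2$ nor $A\ge0$.

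In the reverse direction you only need the inclusion $\IM\subseteq\SIDM$. This holds in every dimension and is recorded in the introduction. It also follows at once from Theorem~\ref{SIDM}: writing $A^{-1}=sI-N$ with $N\ge0$ and $s>\rho(N)$ gives $\log A=-(\log s)I+\sum_{k\ge1}(N/s)^k/k$, which is essentially nonnegative. Relying on the full $2\times2$ equivalence of Theorem~\ref{imsidm2} is more than necessary. Since that theorem is quoted from the same source as the statement you are proving, it is cleaner not to depend on it. Phrased with the general inclusion, your argument proves both $2\times2$ results at once:
$\SIDM_2\subseteq\{A\ge0:\det A>0\}\subseteq\IM_2\subseteq\SIDM_2$.

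Your backup route is also correct and is the more self-contained one. It needs only Theorem~\ref{SIDM} and the positivity of the first divided difference $f[\lambda_1,\lambda_2]$ of $f=\log$. That is exactly the fact the paper uses in Step~1.3 of Proposition~\ref{prop:IDM-intermediate-form}, in keeping with its divided-difference computations elsewhere. Two small points there:
\begin{enumerate}
\item Positivity of the real eigenvalues needs $\operatorname{tr}A=a+d>0$ in addition to $\Delta>0$. You already have this, since you showed $a,d>0$.
\item The repeated-eigenvalue case needs no continuity argument. A zero discriminant forces $a=d$ and $bc=0$, so $N=A-aI\ge0$ satisfies $N^2=0$, and $\log A=(\log a)I+N/a$ directly.
\end{enumerate}
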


The preceding result characterizes the nonsingular members of $\IDM_2$. The singular case can also be explicitly described, leading to the following complete characterization of $\IDM_2$.

\begin{thm}\label{thm:2x2-IDM-general}
{\rm Let $A\in M_2(\mathbb R)$ be nonnegative. Then
\[
A\in\IDM_2
\quad\Longleftrightarrow\quad
A=0\ \text{or}\ \bigl(\det A\ge0\text{ and }\operatorname{tr}A>0\bigr).
\]}
\end{thm}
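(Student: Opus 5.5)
The proof splits according to whether $A$ is invertible. When $\det A>0$, Theorem~\ref{thm:SIDM2-det} gives $A\in\SIDM_2\subseteq\IDM_2$, and then $\operatorname{tr}A>0$ automatically: the diagonal entries are nonnegative, and if both vanished we would have $\det A=-a_{12}a_{21}\le0$. Conversely, a nonsingular infinitely divisible $A$ lies in $\SIDM_2$, so $\det A>0$. Hence the whole content of the statement is the singular case. There we must show that a nonzero nonnegative $A$ with $\det A=0$ is in $\IDM_2$ if and only if $\operatorname{tr}A>0$.

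\emph{Sufficiency in the singular case.} Suppose $\det A=0$ and $\operatorname{tr}A=\tau>0$. Since $A$ is $2\times 2$ of rank one, the Cayley--Hamilton theorem gives $A^2=\tau A$. Set
\[
K_m=\tau^{(1-m)/m}A=\tau^{1/m-1}A .
\]
Then $K_m\ge0$, and by induction $A^m=\tau^{m-1}A$, so
\[
K_m^m=\tau^{1-m}A^m=\tau^{1-m}\tau^{m-1}A=A .
\]
Thus $A$ has a nonnegative $m$th root for every $m$, so $A\in\IDM_2$. The case $A=0$ is trivial, with $K_m=0$.

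\emph{Necessity in the singular case.} Suppose $A\ne0$, $\det A=0$, $\operatorname{tr}A=0$, and $A\ge0$. The diagonal entries are nonnegative with zero sum, so both vanish. Then $0=\det A=-a_{12}a_{21}$, so exactly one off-diagonal entry is positive, and $A$ is a positive multiple of $E_{12}$ or of $E_{21}$. In particular $A$ is nilpotent and nonzero, with $A^2=0$.

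Suppose $K\ge0$ satisfies $K^2=A$. Then $K^4=A^2=0$, so $K$ is nilpotent. For a $2\times2$ matrix this forces $K^2=0$, which gives $A=0$, a contradiction. So $A$ has no square root at all, and in particular no nonnegative one, so $A\notin\IDM_2$.

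Combining the invertible and singular cases gives exactly the stated equivalence. The only point needing care is that an infinitely divisible matrix must have positive trace unless it is zero. The nilpotency argument settles this, and no real obstacle is expected.
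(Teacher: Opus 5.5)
Your proof is correct and follows essentially the same argument as the paper: for singular $A$ with positive trace you use Cayley--Hamilton to get $A^2=\tau A$ and the nonnegative root $\tau^{1/m-1}A$, and you rule out nonzero trace-zero matrices by the nilpotency argument ($K^4=0$ forces $K^2=0$). The only cosmetic difference is in the necessity direction: the paper gets $\det A\ge 0$ directly from $\det A=(\det C)^2$, whereas you split off the nonsingular case and invoke Theorem~\ref{thm:SIDM2-det}.
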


\begin{proof}
Suppose $A\in\IDM_2$. Since $A=C^2$ for some $C\ge0$,
$
\det A=(\det C)^2\ge0.
$
If $A\neq0$ and $\operatorname{tr}A=0$, then, since $A\ge0$, its
diagonal entries are zero. Thus
$
A=\begin{pmatrix}0&x\\ y&0\end{pmatrix}
$
with $x,y\ge0$. Since $\det A=-xy\ge0$, we have $xy=0$, and hence
$A^2=0$. But $A=C^2$ for some $C\ge0$, so $C^4=A^2=0$. Thus, $C$ is
nilpotent and, since $C$ is $2\times2$, its nilpotency index is at
most $2$. Hence $C^2=0$, contradicting $C^2=A\neq0$. Therefore
$\operatorname{tr}A>0$.

Conversely, $0\in\IDM_2$. Suppose $A\neq0$, $\det A\ge0$, and
$\operatorname{tr}A>0$. If $\det A>0$, then
Theorem~\ref{thm:SIDM2-det} gives $A\in\SIDM\subseteq\IDM_2$.
If $\det A=0$, put $\tau=\operatorname{tr}A>0$. By
Cayley--Hamilton, $A^2=\tau A$, and hence
$A^m=\tau^{m-1}A$ for every $m\in\mathbb N$. Therefore, for every
$m\in\mathbb N$,
$B_m:=\tau^{1/m-1}A\ge0$
and
$B_m^m=\tau^{1-m}A^m=A.$
Thus $A$ admits a nonnegative $m$th root for every $m\in\mathbb N$,
and hence $A\in\IDM_2$.
\end{proof}

As an immediate consequence, we obtain the following useful criterion for
nonnegative $2\times2$ matrices with positive diagonal entries.

\begin{cor}\label{lem:2x2-IDM}
{\rm Let
$
B=\begin{pmatrix} a&x\\ y&d \end{pmatrix}\ge0,
\quad a,d>0.
$
Then
$
B\in\IDM_2
\Longleftrightarrow
\det B\ge0.
$}
\end{cor}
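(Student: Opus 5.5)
This is a direct specialization of Theorem~\ref{thm:2x2-IDM-general}, and the plan is to check its two alternatives against the hypotheses on $B$. Since $B\ge0$, Theorem~\ref{thm:2x2-IDM-general} gives $B\in\IDM_2$ if and only if $B=0$, or $\det B\ge0$ and $\operatorname{tr}B>0$. The hypothesis $a,d>0$ removes the first alternative and turns the trace condition into a tautology.

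For the forward direction, assume $B\in\IDM_2$. Because $a>0$, we have $B\neq0$, so the theorem places us in the second alternative. In particular $\det B\ge0$. Alternatively, one can argue directly: $B=C^2$ for some $C\ge0$, so $\det B=(\det C)^2\ge0$.

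For the converse, assume $\det B\ge0$. Then $\operatorname{tr}B=a+d>0$ and $B\ge0$, so Theorem~\ref{thm:2x2-IDM-general} yields $B\in\IDM_2$.

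There is no real obstacle. The only point needing care is to note explicitly that positivity of the diagonal forces both $B\neq0$ and $\operatorname{tr}B>0$. This is what allows the general criterion to collapse to the single condition $\det B\ge0$.
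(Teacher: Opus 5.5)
Your proposal is correct and matches the paper. The paper presents this corollary as an immediate consequence of Theorem~\ref{thm:2x2-IDM-general}, and you use it the same way: the hypothesis $a,d>0$ rules out $B=0$ and gives $\operatorname{tr}B>0$, so the criterion reduces to $\det B\ge0$.
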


% \begin{lem}\label{lem:2x2-IDM}
% Let
% \[
% B=\begin{pmatrix} a&x\\ y&d \end{pmatrix}\ge0,
% \qquad a,d>0.
% \]
% Then
% \[
% B\in\IDM_2 \quad\Longleftrightarrow\quad \det B\ge0.
% \]
% \end{lem}

% \begin{proof}
% If $\det B<0$, then $B$ cannot have an even real root.
% Conversely, suppose $\det B\ge0$. Since
% \[
% (\operatorname{tr}B)^2-4\det B=(a-d)^2+4xy\ge0,
% \]
% the eigenvalues of $B$ are real and since $\operatorname{tr}B>0$
% and $\det B\ge0$ these eigenvalues are nonnegative. Write them as
% $\mu_1\ge\mu_2\ge0$.

% If $\mu_1>\mu_2>0$, applying Lagrange interpolation for
% matrix functions, with $0<r<1$,
% \[
% B^r
% =
% \frac{\mu_1^r-\mu_2^r}{\mu_1-\mu_2}\,B
% +
% \frac{\mu_1\mu_2^r-\mu_2\mu_1^r}{\mu_1-\mu_2}\,I.
% \]
% Both coefficients are positive, since $t^r$ is increasing and
% $t^{r-1}$ is decreasing on the interval $(0,\infty)$. Hence $B^r\ge0$.

% %(
% %$
% %f(B)
% %=
% %f(\mu_1)\frac{B-\mu_2I}{\mu_1-\mu_2}
% %+
% %f(\mu_2)\frac{B-\mu_1I}{\mu_2-\mu_1}.
% %
% %Taking $f(t)=t^r$, where $0<r<1$, gives
% %\[
% %B^r
% %=
% %\mu_1^r\frac{B-\mu_2I}{\mu_1-\mu_2}
% %+
% %\mu_2^r\frac{B-\mu_1I}{\mu_2-\mu_1}.
% %\]
% %Collecting the coefficients of $B$ and $I$, we obtain
% %\[
% %B^r=\alpha_rB+\beta_rI,
% %\qquad
% %\alpha_r=\frac{\mu_1^r-\mu_2^r}{\mu_1-\mu_2},
% %\qquad
% %\beta_r=
% %\frac{\mu_1\mu_2^r-\mu_2\mu_1^r}{\mu_1-\mu_2}.
% %)\]

% If $\mu_1>0$ and $\mu_2=0$, then
% $B^r=\mu_1^{r-1}B\ge0$.
% If $\mu_1=\mu_2=\mu$, then 
% $B=\mu I+N,$ with $N\ge0,\qquad N^2=0,$ 
% and therefore
% $
% B^r=\mu^rI+r\mu^{r-1}N\ge0.
% $ 
% Taking $r=1/m$ shows that $B^{1/m}\ge0$ for every $m\in\mathbb N$.
% Hence $B\in\IDM_2$.
% \end{proof}

\begin{rem}\label{rem:3x3-triangular-IDM}
{\rm Let
\[
T=
\begin{pmatrix}
a&x&z\\
0&b&y\\
0&0&c
\end{pmatrix},
\qquad
a,b,c>0,\quad x,y,z\ge0,
\]
and let $f(t)=\log t$. By \cite[Theorem~4.11]{hig},
\[
(\log T)_{12}=x f[a,b],
\qquad
(\log T)_{23}=y f[b,c],
\]
and
$(\log T)_{13}
=
z f[a,c]+xy f[a,b,c].$ 
Since
$f[a,b]>0,$
$f[b,c]>0$, and $f[a,c]>0,$ 
the first two off-diagonal entries of $\log T$ are nonnegative.
Thus $\log T$ is essentially nonnegative if and only if
$z f[a,c]+xy f[a,b,c]\ge0$.
Consequently,
\begin{equation*}\label{eq:3x3-triangular-IDM}
T\in\IDM_3
\quad\Longleftrightarrow\quad
z\ge
-\frac{f[a,b,c]}{f[a,c]}xy.
\end{equation*}
The same criterion applies when this block is embedded as a direct
summand in $M_n(\mathbb R)$ with positive scalar blocks on the
remaining coordinates. This observation will be useful in the proof
of the coefficient structure of $\Phi$, where it allows us to relate
$\lambda_{ij}$, $\lambda_{jk}$, and $\lambda_{ik}$.}
\end{rem}

The next result represents an important step towards describing the bijective linear preservers of the class \IDM.

\begin{pro}\label{prop:IDM-intermediate-form}
{\rm Let $n\ge2$, and let $\Phi:M_n(\mathbb R)\to M_n(\mathbb R)$ be a
bijective linear map satisfying $\Phi(\IDM)=\IDM$. Then there exist a
permutation matrix $P$ and positive diagonal matrices $D_1,D_2$ such
that either $\Phi(A)=D_1PAP^TD_2$ or $\Phi(A)=D_1PA^TP^TD_2$ for all
$A\in M_n(\mathbb R)$.}
\end{pro}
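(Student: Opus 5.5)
We start from the conclusion already in hand: by Lemma~\ref{IDMpreserver_nonnegative} and Theorem~\ref{nonnegativepreserver}, $\Phi(E_{ij})=\lambda_{ij}E_{\sigma(i,j)}$ with $\lambda_{ij}>0$ and $\sigma$ a permutation of the index pairs. The plan is to show that $\sigma$ is induced by a permutation $\pi$ of $\{1,\dots,n\}$, in the sense that either $\sigma(i,j)=(\pi(i),\pi(j))$ for all $i,j$ or $\sigma(i,j)=(\pi(j),\pi(i))$ for all $i,j$. We then show that the scalars factor as $\lambda_{ij}=d_i e_j$, possibly after transposing, for positive $d_i,e_j$. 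Granting both facts, we take $P$ to be the permutation matrix of $\pi$ and absorb the $d_i,e_j$ into $D_1,D_2$, which gives $\Phi(A)=D_1PAP^TD_2$ or its transposed variant.

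\textbf{Step 1: diagonal units go to diagonal units.} We use the $2\times2$ (and embedded) criteria. Since $I\in\IDM$, the image $\Phi(I)=\sum_i\lambda_{ii}E_{\sigma(i,i)}$ must lie in $\IDM$. Every matrix in $\IDM$ with nonzero determinant lies in $\SIDM$ and hence has positive diagonal. We also apply $\Phi^{-1}$ to $\Phi(I)$-type elements and use invertibility. Cleaner still, we consider $D=\sum_i x_iE_{ii}$ with $x_i>0$ generic, and its image $M=\sum_i x_i\lambda_{ii}E_{\sigma(i,i)}$, a nonnegative matrix with exactly $n$ nonzero entries, which must be infinitely divisible. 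Conversely, $\Phi^{-1}$ maps $D'=\sum_i y_iE_{ii}$ into $\IDM$. We would show that a nonnegative $\IDM$ matrix supported on $n$ positions, whose image under $\Phi^{-1}$ is invertible, must be diagonal. If $M$ had an off-diagonal support entry $(k,l)$ while missing the diagonal entry at $(k,k)$, we would produce a contradiction through a principal submatrix argument. Concretely, we compare the number of distinct eigenvalues: $e^{tB}$-type roots of a matrix with zero diagonal positions force nilpotent parts, which is impossible when the matrix is in $\SIDM$ and has positive diagonal. A counting argument (exactly $n$ entries and all diagonal entries positive) then shows that $\sigma$ maps diagonal positions onto diagonal positions, say $\sigma(i,i)=(\pi(i),\pi(i))$.

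\textbf{Step 2: off-diagonal pattern.} For $i\ne j$, the matrix $I+tE_{ij}$ lies in $\IDM$, so $\Phi(I)+t\lambda_{ij}E_{\sigma(i,j)}$ lies in $\IDM$ for all $t\ge0$; this alone gives no restriction. The real constraint comes from $2\times2$ blocks via Corollary~\ref{lem:2x2-IDM}. The matrix $I+sE_{ij}+uE_{ji}$ is in $\IDM$ if and only if $su\le1$. Its image is the diagonal matrix $\Phi(I)$ plus entries at $\sigma(i,j)$ and $\sigma(j,i)$. If these two entries did not form a transposed pair $(k,l),(l,k)$, the image would be triangular up to permutation, hence in $\IDM$ for all $s,u\ge0$; applying $\Phi^{-1}$ would then contradict $su\le1$. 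So $\sigma$ maps transposed pairs to transposed pairs. Next we use $3\times3$ upper triangular matrices and Remark~\ref{rem:3x3-triangular-IDM}. The set of $(x,y,z)$ for which $I+xE_{ij}+yE_{jk}+zE_{ik}$ (suitably perturbed so the diagonal is distinct) lies in $\IDM$ is cut out by $z\ge\kappa xy$ with $\kappa>0$. This "path" relation $(i,j),(j,k)\to(i,k)$ must be preserved, possibly reversed, by $\sigma$. Consequently $\sigma(i,j)$ has row $\pi(i)$ and column $\pi(j)$, or the transposed assignment. We must also check that the choice between direct and transposed is uniform, which follows from connectivity by sharing indices when $n\ge3$; the case $n=2$ follows from the pair structure directly.

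\textbf{Step 3: scalars.} Compose $\Phi$ with the inverse permutation similarity and transpose, so that $\sigma=\mathrm{id}$. Then $\Phi(A)=\Lambda\circ A$ is a Hadamard product with $\lambda_{ij}>0$. The $2\times2$ constraint, that $\det\ge0$ is preserved in both directions, gives $\lambda_{ij}\lambda_{ji}=\lambda_{ii}\lambda_{jj}$. The $3\times3$ triangular constraint, applied at boundary points $z=\kappa xy$ and using that $\kappa$ depends only on the diagonal, yields $\lambda_{ik}\lambda_{jj}=\lambda_{ij}\lambda_{jk}$. These multiplicative cocycle relations give $\lambda_{ij}=d_ie_j$.

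I expect Step~2, and in particular the rigorous handling of the $3\times3$ boundary via divided differences, to be the main obstacle. Step~3 also needs care, since the diagonal of the image changes $\kappa$.
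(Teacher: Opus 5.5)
\textbf{Main gap: Step~2 never links the off-diagonal positions to $\pi$ (case $n\ge3$).}
Every test matrix you use there, $I+sE_{ij}+uE_{ji}$ and the perturbed $I+xE_{ij}+yE_{jk}+zE_{ik}$, has all diagonal entries positive, and so does its image. The information you extract is therefore only about the pattern of off-diagonal positions: transposed pairs go to transposed pairs, a $2$-path with its shortcut goes to a $2$-path with its shortcut, and the orientation is uniform.

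All of this is satisfied by $\sigma(i,i)=(i,i)$ together with $\sigma(i,j)=(\tau(i),\tau(j))$ for $i\neq j$, where $\tau\in S_n$ is arbitrary. So the step ``consequently $\sigma(i,j)$ has row $\pi(i)$ and column $\pi(j)$'' does not follow. The quantitative dependence of $\kappa$ on the diagonal could in principle detect $\tau\neq\pi$, but you do not carry this out. It is also not automatic, since $f[a,b,c]/f[a,c]$ is symmetric in $a$ and $c$.

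The missing idea, which is what the paper uses, is to test with a \emph{single} diagonal unit:
\begin{itemize}
\item $E_{ii}+tE_{ij}$ and $E_{jj}+tE_{ij}$ are idempotent, hence lie in \IDM.
\item $\alpha E_{rr}+\beta E_{ab}\notin\IDM$ whenever $a\neq b$, $r\notin\{a,b\}$ and $\alpha,\beta>0$. Its restriction to the generalized $0$-eigenspace is nonzero and nilpotent, which no $m$th power with $m\ge n$ can produce.
\end{itemize}
Together these give $\sigma(i,j)\in\{(\pi(i),\pi(j)),(\pi(j),\pi(i))\}$ at once.

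Uniformity of the orientation then comes from comparing $\diag(a_r)+sE_{ij}+tE_{jk}$ with its image. The matrix itself is not in \IDM, because the $(i,k)$ entry of its logarithm is $st\,f[a_i,a_j,a_k]<0$. Under a mixed orientation, however, its image has two off-diagonal units sharing a row or a column index. Their products vanish, so the image lies in \IDM, a contradiction.

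\textbf{Step~1 also fails as written.} $\Phi(I)$ is not known to be invertible, so the \SIDM\ positive-diagonal argument does not apply. Moreover, a nonnegative matrix in \IDM\ with $n$ nonzero entries need not be diagonal: for $n=2$, $E_{11}+E_{12}$ is idempotent. Your ``counting argument'' therefore has no valid input. The correct argument is unit by unit: $\Phi(E_{ii})=\lambda_{ii}E_{\sigma(i,i)}\in\IDM$, and a positive multiple of an off-diagonal unit is nonzero nilpotent, hence not in \IDM.

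\textbf{Two smaller points.}
\begin{itemize}
\item In Step~2, two off-diagonal entries without a $2$-cycle do \emph{not} always give a matrix in \IDM; a $2$-path $(k,l),(l,m)$ does not. Your conclusion about transposed pairs survives anyway, because such an image fails to be in \IDM\ for small $s,u>0$.
\item In Step~3, the threshold comparison only gives $\lambda_{ij}\lambda_{jk}/\lambda_{ik}=\frac{f[a,b,c]\,f[d_ia,d_kc]}{f[a,c]\,f[d_ia,d_jb,d_kc]}$. Identifying this constant with $d_j$ needs an extra step. The paper does this by comparing with the reversed triple $(k,j,i)$ at $a=b=c=1$ and combining with $\lambda_{ij}\lambda_{ji}=d_id_j$.
\end{itemize}
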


\begin{proof}
By Lemma~\ref{IDMpreserver_nonnegative},
$\Phi(M_n(\mathbb R_+))=M_n(\mathbb R_+).$ 
Since the extreme rays of the nonnegative cone are precisely
$\mathbb R_+E_{ij}$, there exist positive scalars $\lambda_{ij}$ and
a permutation $\sigma$ of the positions of the matrix-unit $n^2$ such that
\begin{equation}\label{eq:Phi-matrix-units}
\Phi(E_{ij})=\lambda_{ij}E_{\sigma(i,j)}.
\end{equation}

Our aim is twofold. First, we determine the combinatorial structure
of $\sigma$ and show that there exists a permutation $p\in S_n$ such
that either
\[
\sigma(i,j)=(p(i),p(j))
\qquad\text{for all }i,j,
\]
or
\[
\sigma(i,j)=(p(j),p(i))
\qquad\text{for all }i,j.
\]
Second, we show that the coefficients factor as
\[
\lambda_{ij}=\alpha_i\beta_j
\qquad\text{for all }i,j
\]
for some positive numbers $\alpha_i,\beta_j$. These two conclusions
yield the required forms.

\medskip
\noindent
\textbf{Step 1: The combinatorial structure of $\sigma$.}

We determine the possible positions of the matrix units under $\Phi$.

\medskip
\noindent
\emph{Step 1.1: Images of the diagonal matrix units.}
For every $i$,
$
E_{ii}^m=E_{ii}
\quad(m\ge1),
$
so $E_{ii}\in\IDM_n$. On the other hand,
$
E_{ij}\notin\IDM_n
\quad(i\neq j).
$
Indeed, if $X^m=E_{ij}$ for some $m\ge n$, then
$X^{2m}=0$, so $X$ is nilpotent. Hence $X^n=0$, and since
$m\ge n$, we obtain $X^m=0$, contradicting $X^m=E_{ij}$.

Thus, the diagonal matrix units are precisely the matrix units
belonging to \IDM. Since $\Phi$ preserves \IDM~ in both
directions, there exist a permutation $p\in S_n$ and positive numbers
$d_i$ such that
\begin{equation}\label{eq:Phi-diagonal-units}
\Phi(E_{ii})=d_iE_{p(i)p(i)}.
\end{equation}

%\medskip
\noindent
\emph{Step 1.2: Possible positions of the off-diagonal matrix units.}
{Fix $i\neq j$. If $n=2$, then, since $\sigma$ is a permutation and the two diagonal positions have already been occupied by the images of $E_{11}$ and $E_{22}$, the two off-diagonal matrix units must be mapped onto the two off-diagonal positions. Hence \[ \sigma(i,j) \in \{(p(i),p(j)),(p(j),p(i))\}. \] Suppose now that $n\ge3$.}
 Since
\[
(E_{ii}+tE_{ij})^2=E_{ii}+tE_{ij},
\qquad
(E_{jj}+tE_{ij})^2=E_{jj}+tE_{ij},
\]
both matrices belong to \IDM for every $t\ge0$.

Since the diagonal positions are precisely the images of the
diagonal matrix units, we have 
$\sigma(i,j)=(a,b),$ 
then $a\neq b$.

Noting that if $r\notin\{a,b\}$ and
$\alpha,\beta>0$, then
\begin{equation}\label{eq:diag-offdiag-not-IDM}
\alpha E_{rr}+\beta E_{ab}\notin\IDM.
\end{equation}
Indeed, the restriction of this matrix to its generalized
$0$-eigenspace is nonzero and nilpotent. If
$Y^m=\alpha E_{rr}+\beta E_{ab}$ 
for some $m\ge n$, then the generalized $0$-eigenspaces of $Y$ and
$Y^m$ coincide. The restriction of $Y$ to this subspace is nilpotent
of index at most $n$, so its $m$th power is zero, contradicting the
nonzero restriction of $Y^m$.

Applying \eqref{eq:diag-offdiag-not-IDM} to the images of
$E_{ii}+tE_{ij}$ and $E_{jj}+tE_{ij}$ shows that both $p(i), p(j) \in \{a,b\}$. Since $p(i)\neq p(j)$,
$ 
\{a,b\}=\{p(i),p(j)\},
$
and therefore
\begin{equation}\label{eq:sigma-two-orientations}
\sigma(i,j)
\in
\{(p(i),p(j)),(p(j),p(i))\}.
\end{equation}

% Moreover, since $\sigma$ is a permutation, the two matrix units
% $E_{ij}$ and $E_{ji}$ must occupy the two distinct positions
% \[
% E_{p(i)p(j)}
% \qquad\text{and}\qquad
% E_{p(j)p(i)}.
% \]
% Thus the orientation is well defined on each unordered pair
% $\{i,j\}$.

Moreover, applying \eqref{eq:sigma-two-orientations} to $(j,i)$ gives
$\sigma(j,i)\in
\{(p(j),p(i)),(p(i),p(j))\}.$ 
Since $\sigma$ is a permutation and $(i,j)\neq(j,i)$, we have
$\sigma(i,j)\neq\sigma(j,i).$ 
Hence exactly one of the following two alternatives holds
$\sigma(i,j)=(p(i),p(j)), \sigma(j,i)=(p(j),p(i))$, or
$\sigma(i,j)=(p(j),p(i)), \sigma(j,i)=(p(i),p(j)).$
We call the first alternative orientation-preserving on the unordered
pair $\{i,j\}$ and the second orientation-reversing. 

\noindent
\emph{Step 1.3: The orientation is global.}
{ If $n=2$, there is only one unordered pair, namely $\{1,2\}$, so the orientation is automatically global. We therefore assume $n\ge3$ for the remainder of this step.}
Suppose, to the contrary, that there exist distinct indices $i,j,k$ such that
the pair $\{i,j\}$ is orientation-preserving while the pair $\{j,k\}$
is orientation-reversing. Thus
\begin{equation}\label{eq:mixed-orientation}
\sigma(i,j)=(p(i),p(j)),
\qquad
\sigma(j,k)=(p(k),p(j)).
\end{equation}

Choose $a_1,\ldots,a_n>0$ such that both
$a_1,\ldots,a_n$ and $d_1a_1,\ldots,d_na_n$ are pairwise distinct,
and set
\begin{equation*}\label{eq:orientation-test}
A=\sum_{r=1}^n a_rE_{rr}+sE_{ij}+tE_{jk},
\qquad s,t>0.
\end{equation*}
After a simultaneous permutation of the coordinates, $A$ is the
direct sum of positive scalar blocks and the $3\times3$ upper
triangular block
\[
\begin{pmatrix}
a_i&s&0\\
0&a_j&t\\
0&0&a_k
\end{pmatrix}.
\]

%For the indices $i,j,k$, the two off-diagonal entries form the directed
%chain
%\[
%i\longrightarrow j\longrightarrow k,
%\]
%and the corresponding matrix units satisfy
%\[
%E_{ij}E_{jk}=E_{ik}\neq0.
%\]

By \cite[Theorem~4.11]{hig}, applied to $f(x)=\log x$, we have 
\begin{equation}\label{eq:logA-second-term}
(\log A)_{ik}=st\,f[a_i,a_j,a_k].
\end{equation}
Since $f[a_i,a_j,a_k]<0$ by Remark~\ref{rem:log-int-rep} and
$s,t>0$, it follows that  $(\log A)_{ik}<0.$ 
 Therefore,
$
A\notin\IDM_n.
$

We now examine the image of $A$. By \eqref{eq:mixed-orientation},
$E_{ij}\longmapsto
\lambda_{ij}E_{p(i)p(j)}$ and  
$E_{jk}\longmapsto
\lambda_{jk}E_{p(k)p(j)}.$
%Thus the chain
%\[
%i\longrightarrow j\longrightarrow k
%\]
%is transformed into
%\[
%p(i)\longrightarrow p(j)\longleftarrow p(k),
%\]
%whose two edges have a common terminal vertex. Put
Set $F=E_{p(i)p(j)}$  and $G=E_{p(k)p(j)}$.
Since $p(i),p(j),p(k)$ are distinct, the multiplication rule for
matrix units gives
\begin{equation}\label{eq:FG-zero}
FG=GF=0.
\end{equation}
%Hence, no product involving both off-diagonal matrix units is nonzero.

Using
$\Phi(E_{rr})=d_rE_{p(r)p(r)},$ 
we obtain
\[ \Phi(A)
=
\sum_{r=1}^n d_ra_rE_{p(r)p(r)}
+s\lambda_{ij}E_{p(i)p(j)}
+t\lambda_{jk}E_{p(k)p(j)}.
\]
After a simultaneous permutation of the coordinates, $\Phi(A)$ is
the direct sum of positive scalar blocks and the $3\times3$ upper
triangular block
\begin{equation*}\label{eq:image-mixed-block}
\begin{pmatrix}
d_i a_i&0&\lambda_{ij}s\\
0&d_k a_k&\lambda_{jk}t\\
0&0&d_j a_j
\end{pmatrix}.
\end{equation*}
By \cite[Theorem~4.11]{hig}, the only nonzero off-diagonal entries
of the logarithm of this block are
\[
\lambda_{ij}s\,f[d_i a_i,d_j a_j]
\qquad\text{and}\qquad
\lambda_{jk}t\,f[d_k a_k,d_j a_j].
\]
Indeed, by \eqref{eq:FG-zero}, no product involving both
off-diagonal matrix units is nonzero, so no mixed second-order term
appears in the divided-difference expansion.

Since $\log x$ is strictly increasing on $(0,\infty)$,
\[
f[u,v]
=
\frac{\log u-\log v}{u-v}>0
\qquad
(u,v>0,\ u\neq v).
\]
The relevant diagonal entries are pairwise distinct by construction,
and
$
\lambda_{ij},\lambda_{jk},s,t>0.
$
Therefore, all off-diagonal entries of $\log\Phi(A)$ are nonnegative.
The remaining blocks are positive scalars, so their logarithms are
scalar blocks. Hence $\log\Phi(A)$ is essentially nonnegative and 
$
\Phi(A)\in\IDM.
$
This contradicts Theorem~\eqref{SIDM}, since
$
\Phi(\IDM)=\IDM
$
which implies $ 
A\in\IDM$ if and only if $\Phi(A)\in\IDM.$ 

The opposite mixed orientation is excluded by the same argument.
Thus, two unordered pairs sharing an index cannot have opposite
orientations.

Let $\{i,j\}$ and $\{k,\ell\}$ be any two unordered pairs. If
they share an index, they have the same orientation. If they are disjoint, consider the intermediate pair
$\{j,k\}$. The pairs $\{i,j\}$ and $\{j,k\}$ share the index $j$,
so they have the same orientation. Similarly, $\{j,k\}$ and
$\{k,\ell\}$ share the index $k$, so they have the same orientation.
Hence $\{i,j\}$ and $\{k,\ell\}$ have the same orientation.

 Consequently, exactly one of
the following alternatives holds:
\begin{equation*}\label{eq:global-orientation-preserving}
\sigma(i,j)=(p(i),p(j))
\qquad\text{for all }i,j,
\end{equation*}
or
\begin{equation*}\label{eq:global-orientation-reversing}
\sigma(i,j)=(p(j),p(i))
\qquad\text{for all }i,j.
\end{equation*}

\noindent
\textbf{Step 2: The coefficient structure.}

Having determined the position of each matrix unit in Step~1, our next aim
is to determine the structure of the positive coefficients
$\lambda_{ij}$. 
Assume first that the global orientation is preserving, so that
$\sigma(i,j)=(p(i),p(j))$, for all $i,j$.
Let $P$ be the permutation matrix corresponding to $p$, and define
$\widetilde{\Phi}(A):=P^T\Phi(A)P$.
Since permutation similarity preserves \IDM,
$\widetilde{\Phi}(\IDM)=\IDM$.

Moreover,
\begin{equation*}\label{eq:normalized-Phi}
\widetilde{\Phi}(E_{ij})=\lambda_{ij}E_{ij},
\qquad
d_i=\lambda_{ii}>0.
\end{equation*}

\noindent
\emph{Step 2.1: Comparing  the scalars $\lambda_{ij}$ and $\lambda_{ji}$.}
We first verify that
$
\lambda_{ij}\lambda_{ji}=d_id_j,
\;(i\neq j).
$
Fix $i\neq j$ and consider
$A(x,y)=I+xE_{ij}+yE_{ji},
\; x,y\ge0.$ 
Up to a simultaneous permutation of coordinates, $A(x,y)$ is the
direct sum of
$
\begin{pmatrix}
1&x\\
y&1
\end{pmatrix}
$
and $I_{n-2}$. Hence, by
Lemma~\ref{lem:2x2-IDM},
$A(x,y)\in\IDM
\quad\Longleftrightarrow\quad
xy\le1.$ 
The corresponding $2\times2$ block of
$\widetilde{\Phi}(A(x,y))$ is given by
\[
\begin{pmatrix}
d_i&\lambda_{ij}x\\
\lambda_{ji}y&d_j
\end{pmatrix},
\]
while the remaining blocks are positive scalars. Again by
Lemma~\ref{lem:2x2-IDM},
\[
\widetilde{\Phi}(A(x,y))\in\IDM
\quad\Longleftrightarrow\quad
\lambda_{ij}\lambda_{ji}xy\le d_id_j.
\]
Since $\widetilde{\Phi}$ preserves \IDM\ in both directions, for
all $x,y\ge0$,
\[
xy\le1
\quad\Longleftrightarrow\quad
xy\le\frac{d_id_j}{\lambda_{ij}\lambda_{ji}}.
\]
Hence
\begin{equation}\label{eq:lambda-pair-relation}
\lambda_{ij}\lambda_{ji}=d_id_j.
\end{equation}

{Now we distinguish the cases $n=2$ and $n\ge3$. If $n=2$, the relation \eqref{eq:lambda-pair-relation} already yields the factorization required. Indeed, set $\alpha_1=1,$ $\beta_1=d_1,$ $\beta_2=\lambda_{12}$, and $\alpha_2=\frac{\lambda_{21}}{d_1}$. Then $\alpha_1\beta_1=\lambda_{11}$, $\alpha_1\beta_2=\lambda_{12},$ $\alpha_2\beta_1=\lambda_{21},$ and by \eqref{eq:lambda-pair-relation}, \[ \alpha_2\beta_2 = \frac{\lambda_{21}\lambda_{12}}{d_1} = d_2 = \lambda_{22}. \] Thus $\lambda_{ij}=\alpha_i\beta_j$ for $1\le i,j\le2$. Hence the desired factorization follows when $n=2$. 

We now assume $n\ge3$. In this case, two additional steps are needed to obtain the factorization.}

\noindent
\emph{Step 2.2: Comparing $\lambda_{ij}$, $\lambda_{jk}$, and
$\lambda_{ik}$.}
Choose $a,b,c>0$ such that both collections
$a,b,c$ and $d_i a,d_j b,d_k c$
are pairwise distinct. Embed the block
\[
\begin{pmatrix}
a&x&z\\
0&b&y\\
0&0&c
\end{pmatrix},
\qquad x,y\ge0,
\]
as a direct summand in $M_n(\mathbb R)$, with positive scalar blocks
on the remaining coordinates. By Remark~\ref{rem:3x3-triangular-IDM},
\begin{equation}\label{eq:T-threshold}
    T\in\IDM_3
\quad\Longleftrightarrow\quad
z\ge-\frac{f[a,b,c]}{f[a,c]}xy.
\end{equation}
The corresponding $3\times3$ block of $\widetilde{\Phi}(T)$ is
\[
\begin{pmatrix}
d_i a&\lambda_{ij}x&\lambda_{ik}z\\
0&d_j b&\lambda_{jk}y\\
0&0&d_k c
\end{pmatrix},
\]
and hence
\begin{equation}\label{eq:image-T-threshold}
\widetilde{\Phi}(T)\in\IDM
\quad\Longleftrightarrow\quad
z\ge
-\frac{\lambda_{ij}\lambda_{jk}}{\lambda_{ik}}
\frac{f[d_i a,d_j b,d_k c]}
     {f[d_i a,d_k c]}xy.
\end{equation}
Since preservation holds in both directions, comparing 
\eqref{eq:T-threshold} and \eqref{eq:image-T-threshold} gives
\begin{equation*}\label{eq:divided-difference-identity}
\frac{f[a,b,c]}{f[a,c]}
=
\frac{\lambda_{ij}\lambda_{jk}}{\lambda_{ik}}
\frac{f[d_i a,d_j b,d_k c]}
     {f[d_i a,d_k c]}.
\end{equation*}
This identity is initially obtained when both triples of diagonal
entries are pairwise distinct. Since divided differences extend
continuously to repeated nodes, this holds for all $a,b,c>0$.

Taking $a=b=c=1$ and using
$
f[1,1]=1,
$ $
f[1,1,1]=-\frac12,
$
we obtain
\begin{equation*}\label{eq:forward-triple}
-\frac12
=
\frac{\lambda_{ij}\lambda_{jk}}{\lambda_{ik}}
\frac{f[d_i,d_j,d_k]}{f[d_i,d_k]}.
\end{equation*}
Applying the same identity to the reversed ordered triple $(k,j,i)$ implies
\[
-\frac12
=
\frac{\lambda_{kj}\lambda_{ji}}{\lambda_{ki}}
\frac{f[d_k,d_j,d_i]}{f[d_k,d_i]}.
\]
Since divided differences are symmetric,
$
f[d_k,d_j,d_i]=f[d_i,d_j,d_k],
$ $
f[d_k,d_i]=f[d_i,d_k],
$
and therefore
\begin{equation}\label{eq:triple-ratio-symmetry}
\frac{\lambda_{ij}\lambda_{jk}}{\lambda_{ik}}
=
\frac{\lambda_{kj}\lambda_{ji}}{\lambda_{ki}}.
\end{equation}
Combining \eqref{eq:triple-ratio-symmetry} with
\eqref{eq:lambda-pair-relation}, we obtain
\[
\begin{aligned}
\left(
\frac{\lambda_{ij}\lambda_{jk}}{\lambda_{ik}}
\right)^2
&=
\frac{
(\lambda_{ij}\lambda_{ji})
(\lambda_{jk}\lambda_{kj})
}{
\lambda_{ik}\lambda_{ki}
}\\
&=
\frac{(d_id_j)(d_jd_k)}{d_id_k}
=d_j^2,
\end{aligned}
\] and since all coefficients are positive,
\begin{equation}\label{eq:lambda-cocycle}
\lambda_{ij}\lambda_{jk}
=
d_j\lambda_{ik}.
\end{equation}

\medskip
\noindent
\emph{Step 2.3: Factorization of the coefficients.}
We now use \eqref{eq:lambda-pair-relation} and
\eqref{eq:lambda-cocycle} to achieve the goal of Step~2, namely,
$\lambda_{ij}=\alpha_i\beta_j$, for all $i,j$.
Fix an index $r$ and define
\[
\alpha_i=
\begin{cases}
\dfrac{\lambda_{ir}}{d_r},&i\neq r,\\[1ex]
1,&i=r,
\end{cases}
\qquad
\beta_j=
\begin{cases}
\lambda_{rj},&j\neq r,\\
d_r,&j=r.
\end{cases}
\]
If $i,j\neq r$ and $i\neq j$, then
\eqref{eq:lambda-cocycle}, applied to $(i,r,j)$, gives
$\lambda_{ir}\lambda_{rj}
=
d_r\lambda_{ij},$
and therefore $\lambda_{ij}
=
\alpha_i\beta_j$.
If $i=r$ or $j=r$, the same identity follows directly from the
definitions. Finally, if $i=j\neq r$, then
\eqref{eq:lambda-pair-relation}, applied to the pair $(i,r)$, gives
$
\alpha_i\beta_i
=
\frac{\lambda_{ir}\lambda_{ri}}{d_r}
=
d_i
=
\lambda_{ii}.
$
Hence
%\begin{equation}\label{eq:lambda-factorization}
$\lambda_{ij}
=
\alpha_i\beta_j$ for all $i,j.$ 
%\end{equation}

Let $\widetilde D_1
=
\operatorname{diag}(\alpha_1,\ldots,\alpha_n)$ and
$\widetilde D_2
=
\operatorname{diag}(\beta_1,\ldots,\beta_n)$.
Then $\widetilde{\Phi}(E_{ij})
=
\widetilde D_1E_{ij}\widetilde D_2$ for all
$i,j$.
Since the matrix units form a basis,
$\widetilde{\Phi}(A)
=
\widetilde D_1A\widetilde D_2$
for all $A\in M_n(\mathbb R).$ 

% Recalling that
% \[
% \widetilde{\Phi}(A)=P^T\Phi(A)P,
% \]
% we obtain
% \[
% \Phi(A)
% =
% P\widetilde D_1A\widetilde D_2P^T.
% \]
% Since permutation conjugation preserves positive diagonal matrices,
% this can be written as
% \[
% \Phi(A)=D_1PAP^TD_2
% \]
% for suitable positive diagonal matrices $D_1,D_2$.

% Finally, suppose that the global orientation is reversing. Since
% transposition preserves \IDM, composition with transposition
% reduces this case to the orientation-preserving case. Hence
% \[
% \Phi(A)=D_1PA^TP^TD_2.
% \]
% Thus, in either case,
% \[
% \Phi(A)=D_1PAP^TD_2
% \qquad\text{or}\qquad
% \Phi(A)=D_1PA^TP^TD_2.
% \]

Recall that
$\widetilde{\Phi}(A)=P^T\Phi(A)P,$ 
which implies
\[
\Phi(A)
=
P\widetilde D_1A\widetilde D_2P^T
=
(P\widetilde D_1P^T)\,PAP^T\,
(P\widetilde D_2P^T).
\]
Set $D_1=P\widetilde D_1P^T$ and 
$D_2=P\widetilde D_2P^T$ as positive diagonal matrices and observe that
$\Phi(A)=D_1PAP^TD_2. $ 
\end{proof} 

It remains to show that, {for $n\geq 3$}, the two positive diagonal matrices in Proposition~\ref{prop:IDM-intermediate-form} are related in the sense that one is a positive scalar multiple of the inverse of the other. This yields a complete characterization of the bijective linear preservers of $\IDM$.

% \begin{theoremB}
% Let
% $\varphi:M_n(\mathbb R)\to M_n(\mathbb R)$ be a bijective linear map.
% Then $\varphi(\IDM)=\IDM$ if and only if one of the following holds:
% \begin{enumerate}
% \item If $n=1$, there exists a scalar $c>0$ such that
% \[
% \varphi(A)=cA
% \qquad\text{for all }A\in M_1(\mathbb R).
% \]

% \item If $n=2$, there exist a permutation matrix $P$ and positive
% diagonal matrices $D_1,D_2$ such that either
% \[
% \varphi(A)=D_1PAP^TD_2
% \qquad\text{or}\qquad
% \varphi(A)=D_1PA^TP^TD_2
% \]
% for all $A\in M_2(\mathbb R)$.

% \item If $n\ge3$, there exist a permutation matrix $P$, a positive
% diagonal matrix $D$, and a scalar $c>0$ such that either
% \[
% \varphi(A)=c(PD)A(PD)^{-1}
% \qquad\text{or}\qquad
% \varphi(A)=c(PD)A^T(PD)^{-1}
% \]
% for all $A\in M_n(\mathbb R)$.
% \end{enumerate}
% \end{theoremB}

\begin{proof}[Proof of Theorem B]
For $n=1$, we have $\IDM_1=[0,\infty)$. Every bijective linear map on
$\mathbb R$ has the form $\varphi(A)=cA$ with $c\neq0$, and
$\varphi(\IDM_1)=\IDM_1$ if and only if $c>0$.

Now let $n\ge2$ and suppose $\varphi(\IDM)=\IDM$. By
Proposition~\ref{prop:IDM-intermediate-form}, there exist a permutation
matrix $P$ and positive diagonal matrices $D_1,D_2$ such that either
\[
\varphi(A)=D_1PAP^TD_2
\qquad\text{or}\qquad
\varphi(A)=D_1PA^TP^TD_2.
\]
If $n=2$, this is precisely the asserted form.
Let $n\ge3$. As in Step~4 of the proof of Theorem~\hyperref[thm:mainA]{A}, preservation of $\IDM$ further yields $D_2=cD_1^{-1}$ for some $c>0$. Hence \[ \varphi(A)=cD_1PAP^TD_1^{-1} \qquad\text{or}\qquad \varphi(A)=cD_1PA^TP^TD_1^{-1}. \] Set $D=P^TD_1P$ and $S=PD$. Then $D$ is positive diagonal, $S$ is positive monomial, and $D_1P=PD=S$. Therefore \[ \varphi(A)=cSAS^{-1} \qquad\text{or}\qquad \varphi(A)=cSA^TS^{-1}. \]

Conversely, the assertion for $n=1$ is immediate. Let $n=2$ and
suppose first that $\varphi(A)=D_1PAP^TD_2$. Then $A\ge0$ if and only
if $\varphi(A)\ge0$, while
$
\det\varphi(A)=\det(D_1)\det(D_2)\det A,
$
so the sign of the determinant is preserved. Moreover, for $A\ge0$,
$\operatorname{tr}A>0$ if and only if at least one diagonal entry is
positive. Permutation similarity permutes the diagonal entries, and
positive diagonal multiplication preserves their zero pattern. Hence
\[
\operatorname{tr}A>0
\quad\Longleftrightarrow\quad
\operatorname{tr}\varphi(A)>0.
\]
Also, $A=0$ if and only if $\varphi(A)=0$. Therefore
Theorem~\ref{thm:2x2-IDM-general} gives
$
A\in\IDM_2
$
if and only if
$
\varphi(A)\in\IDM_2.
$
The transpose case follows similarly.

Finally, let $n\ge3$. The converse follows by the same argument as in
the proof of Theorem~\hyperref[thm:mainA]{A}. Hence
$\varphi(\IDM)=\IDM$.
\end{proof}

\section*{Acknowledgments}
%******************************************************************************************************

S.M.\ Fallat is supported in part by an NSERC Discovery Research Grant, Application No.: RGPIN-2025-05272.
 The work of PIMS Postdoctoral Fellow S.\ Mondal leading to this publication was supported in part by the Pacific Institute for the Mathematical Sciences. 
%\newpage

\end{document}